\newtheorem{Th}{Theorem}[section]
\newtheorem{Prop}[Th]{Proposition}
\newtheorem{Lem}[Th]{Lemma}
\newtheorem{Cor}[Th]{Corollary}
\newenvironment{altproof}[1]
{\noindent
{\em Proof of {#1}}.}
{\nopagebreak\mbox{}\hfill $\Box$\par\addvspace{0.5cm}}
\newcommand{\wt}{\widetilde}
   \newcommand{\vp}{\varphi}
   \newcommand{\eps}{\varepsilon}
   \def\div{\mathop{\mathrm{div}}}
   \def\Z{\mathbb{Z}}
   \def\R{\mathbb{R}}
   \def\curl{\mathrm{curl}}
   \def\dim{\mathrm{dim}}
   \def\V{\mathcal{V}}
   \def\E{\mathcal{E}}
   \def\J{\mathcal{J}}
   \def\W{\mathcal{W}}
   \def\D{\mathcal{D}}
     \def\M{\mathcal{M}}
     \def\tJ{\wt{\cJ}}
\def\rh{\rightharpoonup}
\newcommand{\cC}{{\mathcal C}}
\newcommand{\cD}{{\mathcal D}}
\newcommand{\cE}{{\mathcal E}}
\newcommand{\cG}{{\mathcal G}}
\newcommand{\cH}{{\mathcal H}}
\newcommand{\cI}{{\mathcal I}}
\newcommand{\cJ}{{\mathcal J}}
\newcommand{\cK}{{\mathcal K}}
\newcommand{\cL}{{\mathcal L}}
\newcommand{\cM}{{\mathcal M}}
\newcommand{\cN}{{\mathcal N}}
\newcommand{\cT}{{\mathcal T}}
\newcommand{\cV}{{\mathcal V}}
\newcommand{\cW}{{\mathcal W}}
\renewcommand{\dim}{{\rm dim}\,}
\newcommand{\al}{\alpha}
\newcommand{\ga}{\gamma}
\newcommand{\Ga}{\Gamma}
\newcommand{\Om}{\Omega}
\def\curlop{\nabla\times}
\newcommand{\weakto}{\rightharpoonup}
\newcommand{\pa}{\partial}
\newcommand{\tX}{\widetilde{X}}
\newcommand{\tu}{\widetilde{u}}
\newcommand{\tv}{\widetilde{v}}
\newcommand{\cTto}{\stackrel{\cT}{\longrightarrow}}
\numberwithin{equation}{section}
\title[Nonlinear Curl-curl problem]{Multiple solutions to a nonlinear curl-curl problem\\ in $\mathbb{R}^3$}
\author[J. Mederski]{Jaros\l aw Mederski}
\author[J. Schino]{Jacopo Schino}
\author[A. Szulkin]{Andrzej Szulkin}
\address[J. Mederski]{\newline\indent
	{\newline\indent 
		Institute of Mathematics,
		\newline\indent 
		Polish Academy of Sciences,
		\newline\indent 
		ul. \'Sniadeckich 8, 00-656 Warsaw, Poland
		\newline\indent 
		and
		\newline\indent 
		Faculty of Mathematics and Computer Science
		\newline\indent 
		Nicolaus Copernicus University
		\newline\indent
		ul. Chopina 12/18, 87-100 Toru\'n, Poland}
}
\email{\href{mailto:jmederski@impan.pl}{jmederski@impan.pl}}
\address[J. Schino]{\newline\indent
	{\newline\indent 
		Institute of Mathematics,
		\newline\indent 
		Polish Academy of Sciences,
		\newline\indent 
		ul. \'Sniadeckich 8, 00-656 Warsaw, Poland}
}
\email{\href{mailto:jschino@poliba.it}{jschino@impan.pl}}
\address[A. Szulkin]{\newline\indent 
	Department of Mathematics,
	\newline\indent 
	Stockholm University,
	\newline\indent 
	106 91 Stockholm, Sweden
	}
\email{\href{mailto:andrzejs@math.su.se}{andrzejs@math.su.se}}
\subjclass[2010]{Primary: 35Q60; Secondary: 35J20, 78A25.}
\keywords{Time-harmonic Maxwell equations, ground state, variational methods, strongly indefinite functional, curl-curl problem}
\begin{document}
\begin{abstract} We look for ground states and bound states  $E:\R^3\to\R^3$ to the  curl-curl problem
$$\nabla\times(\nabla\times E)= f(x,E) \qquad\textnormal{ in } \mathbb{R}^3$$
which originates from nonlinear Maxwell equations.
The energy functional associated with this problem is strongly indefinite due to the infinite dimensional kernel of $\nabla\times(\nabla\times \cdot)$. The growth of the nonlinearity $f$ is controlled by an $N$-function $\Phi:\R\to [0,\infty)$ such that $\displaystyle\lim_{s\to 0}\Phi(s)/s^6=\lim_{s\to+\infty}\Phi(s)/s^6=0$. We prove the existence of a ground state, i.e. a least energy nontrivial solution, and the existence of infinitely many geometrically distinct bound states. We improve previous results concerning ground states of curl-curl problems. Multiplicity results for our problem have not been studied so far in $\R^3$ and in order to do this we construct a suitable critical point theory. It is applicable to a wide class of strongly indefinite problems, including this one and Schr\"odinger equations.
\end{abstract}

\maketitle

\section*{Introduction}
\setcounter{section}{1}

We look for weak solutions to the semilinear curl-curl problem
\begin{equation}\label{eq}
\nabla\times(\nabla\times E)= f(x,E), \qquad x\in\R^3,
\end{equation}
originating from the Maxwell equations where $E(x)\cos(\omega t)$ is a time-harmonic electric field in a nonlinear medium and $f(x,E)$ models a nonlinear polarization in the medium, see \cite{Mederski2015,Stuart91,Stuart93} and the references therein. Another motivation has been provided by Benci and Fortunato \cite{BenFor} who introduced a model for a unified field theory for classical electrodynamics based on a semilinear perturbation of the Maxwell equations in the spirit of the Born-Infeld theory \cite{BornInfeld}. In the magnetostatic case in which the electric field vanishes and the magnetic field is independent of time, this  leads to an equation of the form \eqref{eq} with $E$ replaced by $A$, the gauge potential related to the magnetic field.\\
\indent The semilinear curl-curl problem in $\R^3$ has been solved for the first time in \cite{BenForAzzAprile} in the cylindrically symmetric setting. If $f(x,E)$ depends only on $|E|$, then one can restrict the considerations to
the fields of the form
\begin{equation}\label{eq:sym1}
E(x)=\al(r,x_3)\begin{pmatrix}-x_2\\x_1\\0\end{pmatrix},\qquad r=\sqrt{x_1^2+x_2^2}
\end{equation}
which are divergence-free, so $\curlop(\curlop E)=-\Delta E$ and one can study \eqref{eq} by means of standard variational methods (however, there may still exist solutions which are not of this form). Other results in the cylindrically symmetric setting have been obtained in \cite{DAprileSiciliano,BDPR:2016,HirschReichel,Zeng,Mandel}. We would also like to mention that travelling waves of similar form for a system of nonlinear Maxwell equations have been studied by Stuart and Zhou in \cite{Stuart91,Stuart93,StuartZhou05, StuartZhou10} for asymptotically linear $f$ and  by McLeod,  Stuart and Troy \cite{McLeodStuartTroy} for a cubic nonlinearity. This approach requires again cylindrically symmetric media and involves ODE methods  which are not applicable if $f$ in \eqref{eq} lacks this symmetry.\\
\indent In the  media which are not cylindrically symmetric the problem is much more challenging since the {\em curl-curl operator} $\curlop(\curlop\cdot)$ has an infinite-dimensional kernel 
 consisting of all gradient vector fields. Hence the energy functional associated with \eqref{eq} 
\begin{equation}\label{eq:action}
\E(E)=\frac12\int_{\R^3}|\curlop E|^2\,dx - \int_{\R^3} F(x,E)\,dx
\end{equation}
where $f=\partial_E F$
is unbounded from above and from below and its critical points may have infinite Morse index. For instance, this is the case in a model example
\begin{equation}\label{eq:modelEx}
f(x,E)=\Gamma(x)\min\{|E|^{p-2},|E|^{q-2}\}E\quad\hbox{ with }2<p<6<q
\end{equation}
where $\Gamma\in L^{\infty}(\R^3)$ is $\Z^3$-periodic, positive and bounded away from $0$. Let $\cD(\curl,\Phi)$ be the space of functions  $E$ such that $\nabla\times E$ is square integrable and $E$ is in the Orlicz space $L^\Phi(\R^3,\R^3)$ for an appropriate growth function $\Phi$; see the next section for a more accurate definition. Then $\mathcal{E}\in \cC^1(\cD(\curl,\Phi), \R)$ and  critical points of $\cE$ are weak solutions to \eqref{eq}. 
In addition to these problems related to the strongly indefinite geometry of $\E$, we also have to deal with issues related to the lack of compactness. Namely, the functional $\E'$ is not (sequentially) {\em weak-to-weak$^*$ continuous}, i.e.  weak convergence $E_n\weakto E$ in $\D(\curl,\Phi)$ does not imply that $\E'(E_n)\weakto \E'(E)$ in $\D(\curl,\Phi)^*$, hence we do not know whether the weak limit of a bounded Palais-Smale sequence is a critical point.\\
\indent Similar difficulties have already appeared in curl-curl problems on bounded domains in Bartsch and Mederski \cite{BartschMederski} where a generalized Nehari manifold approach inspired by Szulkin and Weth \cite{SzulkinWeth} has been developed to overcome strong indefiniteness. Other approaches have been developed in subsequent work \cite{BartschMederskiJFA,MederskiJFA2018}; see also the survey \cite{BartschMederskiJFTA}. Note that on a bounded domain there is no problem with lack of weak-to-weak$^*$ continuity of $\cE'$ since a variant of the Palais-Smale condition is satisfied under some constraints. In $\R^3$ however, one has to make a careful concentration-compactness analysis on a suitable generalized Nehari manifold $\cN_\E$; this has been demonstrated in \cite{Mederski2015} which seems to be the only work on ground states of \eqref{eq} in the nonsymmetric setting.\\ 
\indent In the present work we consider a larger class of nonlinearities which have supercritical growth at $0$ and subcritical growth at infinity; this is in the spirit of the zero mass case of Berestycki and Lions \cite{BerLions}, see condition (N2) below. However, as shown by the examples below, we admit nonlinearities which are more general than in \eqref{eq:modelEx}, and this requires a new functional setting for \eqref{eq} as well as a new critical point theory. The reason is that the methods based on the constraint $\cN_{\E}$ (see \eqref{DefOfNehari1} for the definition) cannot be applied straightforwardly here since $\cN_{\E}$ may not be homeomorphic to the unit sphere in the subspace of divergence-free vector fields as in \cite{BartschMederski,Mederski2015}. Our critical point theory for strongly indefinite functionals in Section \ref{sec:criticaslpoitth} also solves the problem of multiplicity of bound states. It has not been considered so far, not even for \eqref{eq:modelEx}. Note that although $\cE$ has the classical linking geometry, the well-known linking results, e.g. of Benci and Rabinowitz \cite{BenciRabinowitz}, are not applicable due to the lack of weak-to-weak$^*$ continuity of $\cE'$.\\
\indent In order to state our main result we assume that
the growth of  $f$ is controlled by a strictly convex $N$-function $\Phi:\R\to [0,\infty)$ of class $\cC^1$ such that
\begin{itemize}
\item[(N1)] $\Phi$ satisfies the $\Delta_2$- and the $\nabla_2$-condition globally.
\item[(N2)] $\displaystyle\lim_{s\to 0}\frac{\Phi(s)}{s^6}=\lim_{s\to\infty}\frac{\Phi(s)}{s^6}=0$.
\item[(N3)] $\displaystyle\lim_{s\to\infty}\frac{\Phi(s)}{s^2}=\infty$.
\end{itemize}
$N$-functions and condition (N1) will be introduced in the next section and are  standard in the theory of Orlicz spaces \cite{RaoRen}. (N2) is inspired by \cite{BerLions} and (N2), (N3) describe supercritical behaviour at $0$ and superquadratic but subcritical at infinity.
We collect our assumptions on the nonli\-nearity $F(x,u)$.
\begin{itemize}
\item[(F1)] $F:\R^3\times\R^3\to\R$ is differentiable with respect to the second variable $u\in\R^3$ for a.e. $x\in\R^3$, and $f=\pa_uF:\R^3\times\R^3\to\R^3$ is a Carath\'eodory function (i.e.\ measurable in $x\in\R^3$, continuous in $u\in\R^3$ for a.e.\ $x\in\R^3$). Moreover, $f$ is $\Z^3$-periodic in $x$, i.e. $f(x,u)=f(x+y,u)$ for all $u\in \R^3$, and almost all $x\in\R^3$ and $y\in\Z^3$.
\item[(F2)] $F$ is uniformly strictly convex with respect to $u\in\R^3$, i.e.\ for any compact $A\subset(\R^3\times\R^3)\setminus\{(u,u):\;u\in\R^3\}$
$$
\inf_{\genfrac{}{}{0pt}{}{x\in\R^3}{(u_1,u_2)\in A}}
 \left(\frac12\big(F(x,u_1)+F(x,u_2)\big)-F\left(x,\frac{u_1+u_2}{2}\right)\right) > 0.
$$
\item[(F3)] There are $c_1$, $c_2>0$ such that
$$|f(x,u)|\le c_1\Phi'(|u|)\text{ and }F(x,u)\ge c_2\Phi(|u|)$$
for every $u\in\R^3$ and a.e. $x\in\R^3$.
\item[(F4)] For every $u\in\R^3$ and a.e. $x\in\R^3$ 
$$\langle f(x,u),u\rangle\geq 2F(x,u).$$
\item[(F5)] If $ \langle f(x,u),v\rangle = \langle f(x,v),u\rangle >0$, then
$\ \displaystyle F(x,u) - F(x,v)
 \le \frac{\langle f(x,u),u\rangle^2-\langle f(x,u),v\rangle^2}{2\langle f(x,u),u\rangle}$.
\end{itemize}
We provide some examples. First we note that if $G = G(x,t): \R^3\times\R\to\R$ is differentiable with respect to $t$, $g:=\partial_tG$ is a Carath\'eodory function, $G(x,0)=0$, $M\in GL(3)$ is an invertible $3\times 3$ matrix and
\begin{equation} \label{increasing}
F(x,u) = G(x,|Mu|) \quad  \text{and} \quad t\mapsto g(x,t)/t \text{ is non-decreasing for } t>0,
\end{equation}
then $F$ satisfies (F4) (cf. \cite{SzulkinWeth}) and it is easy to see that (F5) holds. Note that \eqref{increasing} implies $g(x,0)=0$, so $f$ is continuous also at $u=0$.\\
\indent Suppose  $\Ga\in L^\infty(\R^3)$ is $\Z^3$-periodic, positive and bounded away from $0$. Take 
$$F(x,u):=\Gamma(x)W(|Mu|^2)$$ where $W$ is a function of class $\cC^1$, $W(0)=W'(0)=0$ and $t\mapsto W'(t)$ is non-decreasing on $(0,+\infty)$. Then we check that (F1), (F2), (F4) and (F5) are satisfied (here $G(x,t)=\Gamma(x)W(t^2)$, so \eqref{increasing} holds). If $W(t^2)=\frac1p\big((1+|t|^q)^{\frac{p}{q}}-1\big)$ or
$W(t^2)=\min\big\{\frac1p|t|^p+\frac1q-\frac1p,\frac1q|t|^q\big\}$ with $2<p<6<q$, then we can take $\Phi(t)=W(t^2)$ and we see that (F3) holds as well. Note that
if $W'(t)$ is constant on some interval $[a,b]\subset (0,+\infty)$, then 
 \begin{equation} \label{equality}
 0<\displaystyle F(x,u) - F(x,v)
 =\frac{\langle f(x,u),u\rangle^2-\langle f(x,u),v\rangle^2}{2\langle f(x,u),u\rangle}
 \end{equation}
 for $a<|v|<|u|<b$ and a stronger variant of (F5), i.e. \cite[(F5)]{Mederski2015}, is no longer satisfied. So we cannot apply  variational techniques relying on minimization on the Nehari-Pankov manifold $\cN_{\cE}$ (defined in \eqref{DefOfNehari1}) as in \cite{SzulkinWeth,Mederski2015}. 
Moreover, our problem requires a new functional setting. Indeed, 
if we consider 
$W(t^2)=\frac12(|t|^2-1)\ln (1+|t|)-\frac14|t|^2+\frac12|t|$ for $|t|\geq 1$, $W(t^2)=\frac{\ln2}q(|t|^q-1)+\frac14$ for $|t|<1$, then 
\begin{equation*}
f(x,u)=\begin{cases}
\Gamma(x)u\ln(1+|u|)  & \text{ if } |u|\geq 1,\\
\Gamma(x)\ln(2)u|u|^{q-2} & \text{ if } |u|<1
\end{cases}
\end{equation*}
and (F1)--(F5) are satisfied; however, $f$ cannot be controlled by any $N$-function associated with $L^p(\R^3,\R^3)+L^q(\R^3,\R^3)$ for $2<p<6<q$ as in \cite{Mederski2015} or in  other zero mass case problems \cite{BenGrisantiMich,ClappMaia}. As our final example we take $F(x,u)=\Gamma(x) \Phi(|u|)$ where $\Phi(0)=0$, 
 \begin{equation*}
\Phi'(t) =\begin{cases}
t^{r-1}  & \text{ if } t\le 1,\\
t & \text{ if } 1\le t\le 2, \\
at^5/\ln t & \text{ if } t\ge2,
\end{cases}
\end{equation*}
$r>6$ and $a=2^{-4}\ln2$. Obviously, $F$ satisfies \eqref{increasing} and hence (F4), (F5), and \eqref{equality} holds for $1<|u|<2$. It is easy to see that (F1)--(F3) and (N1)--(N3) hold (to check (N1) it is convenient to use Lemma \ref{AllProp}). Note that here $\Phi(t)/t^6\to 0$ but $\Phi(t)/t^p\to\infty$ as $t\to\infty$ for any $p<6$. Note also that in the last two examples we can replace $|u|$ by $|Mu|$. 
 \\
 \indent
Our principal aim is to prove the following result.
\begin{Th}\label{ThMain} Assume that (F1)--(F5) hold. Then: \\
	(a) Equation \eqref{eq} has a ground state solution, i.e. there is a critical point $E\in\mathcal{N}_{\cE}$ of $\E$ such that
	$$\E(E)=\inf_{\mathcal{N}_{\cE}}\E>0$$
	where
	\begin{eqnarray}\label{DefOfNehari1}
	\mathcal{N}_{\cE} &:=& \{E\in \D(\curl,\Phi): E\neq 0,\; 
	\E'(E)[E]=0,\\\nonumber
	&&\hbox{ and }\E'(E)[\nabla\vp]=0\,\hbox{ for any }\vp\in \cC_0^{\infty}(\R^3)\}.
	\end{eqnarray}
	(b) If in addition $F$ is even in $u$,  there is an infinite sequence $(E_n)\subset\cN_{\cE}$ of geometrically distinct solutions of  \eqref{eq}, i.e. solutions such that $(\Z^3\ast E_n)\cap (\Z^3\ast E_m)=\emptyset$ for $n\neq m$, where
	$$\Z^3\ast E_n:=\{E_n(\cdot+y): y\in\Z^3\}.$$
\end{Th}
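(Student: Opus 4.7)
The plan is to reduce both parts of the theorem to the abstract critical point theory of Section~\ref{sec:criticaslpoitth} applied to the action functional \eqref{eq:action}. First I would introduce the Helmholtz-type splitting $\D(\curl,\Phi) = \V \oplus \W$, where $\V$ is the closure of divergence-free fields and $\W = \{\nabla\vp\}$ is contained in the kernel of $\curlop(\curlop\,\cdot)$. The quadratic term vanishes on $\W$, so
\[
\E(v+w) = \tfrac12\|\curlop v\|_2^2 - \int_{\R^3} F(x,v+w)\,dx.
\]
By (F2) the functional is strictly concave in the $\W$-direction, and together with (F3)--(F4) and (N1)--(N3) one obtains the linking geometry on $\V$: namely, for each $v\in\V\setminus\{0\}$ one has $\E(tv+w)\to -\infty$ as $|t|+\|w\|\to\infty$ and a unique maximizer $m(v) := t(v)v + w(v)$ on $\R v\oplus\W$. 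The point $m(v)$ lies on $\cN_\cE$ as defined in \eqref{DefOfNehari1}. Hypothesis (F5) is exactly what is needed to control uniqueness in the presence of the degeneracy \eqref{equality}, and should allow one to prove that $m$ is a homeomorphism of the unit sphere in $\V$ onto $\cN_\cE$.

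For part~(a), set $c := \inf_{\cN_\cE}\E = \inf_{v\in\V\setminus\{0\}} \E(m(v))$. A minimizing sequence $(E_n)\subset\cN_\cE$ is bounded thanks to (F4) and the superquadratic condition (N3) on $\Phi$. An Orlicz version of Lions' vanishing lemma forces non-vanishing (otherwise $\int F(x,E_n)\,dx\to 0$, contradicting $c>0$), and by the $\Z^3$-periodicity in (F1) one may translate by suitable $y_n\in\Z^3$ so that $E_n\weakto E\ne 0$ along a subsequence. The central obstacle is that $\E'$ is not weak-to-weak$^*$ continuous, so one cannot conclude directly that $\E'(E)=0$. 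This is precisely where the abstract framework of Section~\ref{sec:criticaslpoitth} is used: it is tailored to strongly indefinite functionals and, under the monotonicity/convexity structure supplied by (F2), (F4), (F5), it forces the weak limit of any bounded Palais-Smale sequence to be a critical point. Once $\E'(E)=0$, a Fatou/Brezis-Lieb argument for the Orlicz integral $\int F(x,E_n)\,dx$ combined with (F4) yields $\E(E)\le c$, hence $E\in\cN_\cE$ achieves the ground-state level.

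For part~(b), the evenness of $F$ in $u$ makes $\E$ invariant under $E\mapsto -E$ in addition to the $\Z^3$-action by translations, so I would invoke the equivariant version of the critical point theorem from Section~\ref{sec:criticaslpoitth}. It produces an unbounded sequence of minimax values $c_1\le c_2\le\dots$ on $\cN_\cE$, each of which is critical by the same weak-limit argument as above. The main additional difficulty is to rule out that infinitely many $c_n$ are realized on a single $\Z^3$-orbit, which would prevent geometric distinctness. This is handled by a splitting lemma in the spirit of Coti~Zelati-Rabinowitz adapted to the Orlicz setting: a Palais-Smale sequence at level $c$ decomposes, modulo a vanishing remainder, into a finite sum of $\Z^3$-translated critical profiles whose energies add up to $c$. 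Since $c_n\to\infty$ while each single orbit has a fixed energy, only finitely many $c_n$ can collapse onto the same orbit, yielding the infinite sequence $(E_n)$ with $(\Z^3*E_n)\cap(\Z^3*E_m)=\emptyset$ for $n\ne m$. The hardest step in the whole argument is verifying the abstract hypotheses of Section~\ref{sec:criticaslpoitth} for our $\E$, in particular the compatibility between the Nehari-Pankov structure, the Orlicz-space topology, and the degenerate form of (F5) permitted by \eqref{equality}.
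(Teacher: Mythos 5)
There is a genuine gap at the very first step of your reduction. You claim that (F5) gives, for each $v\in\V\setminus\{0\}$, a \emph{unique} maximizer $m(v)=t(v)v+w(v)$ of $\cE$ on $\R^+v\oplus\W$ and hence a homeomorphism of the unit sphere of $\V$ onto $\cN_{\cE}$. This is exactly what fails under the weak form of (F5) admitted here: for nonlinearities realizing the equality case \eqref{equality} the maximizer on a half-space need not be unique, $\cN_{\cE}$ need not be a topological manifold, and the Nehari--Pankov reduction of \cite{SzulkinWeth,Mederski2015} breaks down --- this degeneracy is the stated reason for building the critical point theory of Section \ref{sec:criticaslpoitth} around the weak form of (I8). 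What \emph{is} unique (by strict convexity (F2), not by (F5)) is the minimizer $w(v)$ of $\cI$ on $v+\W$, so the correct reduction is the homeomorphism $m(v)=(v,w(v))$ of $\V$ onto the larger set $\cM\supset\cN$; for $\tJ=\cJ\circ m$ one then has a mountain pass geometry on $\V$, a ray $\R^+v$ may meet the Nehari set in a whole interval $[t_{\min},t_{\max}]$, and one proves $c_\cM=c_\cN$ (Theorem \ref{ThLink1}) without any radial homeomorphism onto $\cN_{\cE}$. Relatedly, a minimizing sequence on $\cN_{\cE}$ cannot be upgraded to a Palais--Smale sequence without the manifold structure you assumed; the paper instead extracts a Cerami sequence from the mountain pass theorem for $\tJ$, shows it is bounded (Lemma \ref{LemCoercive}), non-vanishing, and only then passes to the limit using the weak-to-weak$^*$ continuity of $\cJ'$ \emph{along $\cM$} (Proposition \ref{prop}, Corollary \ref{CorJweaklycont}), which is the crucial analytic ingredient your sketch leaves as a black box.

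For part (b) your route is also not the paper's and is substantially harder than necessary. A Coti Zelati--Rabinowitz splitting lemma in this strongly indefinite Orlicz setting is nowhere established and would itself require the compactness that is missing; moreover the minimax values produced by Theorem \ref{Th:CrticMulti} are not claimed to be unbounded, so the ``$c_n\to\infty$ versus fixed orbit energy'' step has no foundation. The paper argues by contradiction: assuming finitely many critical orbits, it runs a pseudo-gradient flow and a Benci-type pseudoindex $i^*$ on $\V$, and the only concentration input is the discreteness property $(M)_0^\beta$(b) for \emph{pairs} of Cerami sequences, verified in Lemma \ref{Discreteness} via the Lions-type Lemma \ref{lem:Conv} and Corollary \ref{CorJweaklycont}. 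You would need either to prove your splitting lemma in full or to replace it by this much weaker discreteness statement.
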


In our approach we establish a critical point theory on the topological manifold
$$\cM_\cE:=\{E\in \cD(\curl,\Phi):\E'(E)[\nabla\vp]=0\,\hbox{ for any }\vp\in \cC_0^{\infty}(\R^3)\}$$
which contains $\cN_\cE$ as a subset,
and we show that $\cE$ has the mountain pass geometry in $\cM_\cE$ and admits a Cerami sequence at the ground state level $\inf_{\cN_{\cE}}\cE>0$; see the abstract setting and the critical point theory in Section \ref{sec:criticaslpoitth}. In order to find a nontrivial critical point being a ground state one needs to analyze Cerami sequences in the spirit of Lions \cite{Lions84}. However, this is not straightforward because the kernel of the curl-curl operator is not locally compactly embedded into any $L^p$ or Orlicz space and $\cE'$ lacks weak-to-weak$^*$ continuity. Therefore it is difficult to treat this problem by a concentration-compactness argument directly in the space $\cD(\curl,\Phi)$. Based on a crucial convergence result obtained in Proposition \ref{prop}, we prove that $\cE'$ is weak-to-weak$^*$ continuous in $\cM_\E$, see Corollary \ref{CorJweaklycont}. This allows us to find a nontrivial weak limit of the Cerami sequence which is a ground state solution as in Theorem \ref{Th:CrticMulti}(a). 
Moreover, a result on the discreteness of Cerami sequences allows us to find infinitely many geometrically distinct solutions. \\
\indent We would also like to mention that our methods allow to consider Schr\"odinger equations in the zero mass case as in \cite{BenGrisantiMich,ClappMaia} and we are able to obtain new results with improved growth conditions; see Section \ref{sec:Schordingerproblem}.

\section{Preliminaries and variational setting}\label{sec:varsetting}

Here and in the sequel $|\cdot|_q$ denotes the $L^q$-norm.\\
\indent Now, following \cite{RaoRen}, we recall some basic definitions and results about $N$-functions and Orlicz spaces.
A function $\Phi\colon\R\to[0,+\infty)$ is called an $N$-function, or a nice Young function if  it is convex, even and satisfies
$$\Phi(t)=0\Leftrightarrow t=0,\quad\lim_{t\to 0}\frac{\Phi(t)}{t}=0,\quad\hbox{and }\lim_{t\to+\infty}\frac{\Phi(t)}{t}=+\infty.$$
Given an $N$-function $\Phi$, we can associate with it another function $\Psi\colon\R\to[0,+\infty)$ defined by
\[
\Psi(t):=\sup\{s|t|-\Phi(s):s\ge 0\}
\]
which is an $N$-function as well. $\Psi$ is called the {\em complementary function} to $\Phi$ while $(\Phi,\Psi)$ is called a {\em complementary pair} of $N$-functions.\\
\indent 
We recall from \cite[Section I.3]{RaoRen} that $\Phi'$ and $\Psi'$ exist a.e., $\Psi'(t)=\inf\{s\ge 0:\Phi'(s)>t\}$ for $t\ge 0$, $\Psi'(t)=-\Psi'(-t)$ for $t<0$ and $\Psi$ can be expressed as
\[
\Psi(t)=\int_0^{|t|}\Psi'(s)\,ds.
\]
We also recall from \cite[Section II.3]{RaoRen} that $\Phi$ satisfies the $\Delta_2$-{\em condition globally} (denoted $\Phi\in\Delta_2$) if there exists $K>1$ such that for every $t\in\R$
\[
\Phi(2t)\le K\Phi(t)
\]
(here $2$ can be replaced by any constant $a>1$) while $\Phi$ satisfies the $\nabla_2${\em -condition globally} (denoted $\Phi\in\nabla_2$) if there exists $K'>1$ such that for every $t\in\R$
\[
\Phi(K't)\ge 2K'\Phi(t).
\]
The set 
$$L^\Phi:=L^\Phi(\R^3,\R^3):=\Big\{E\colon\R^3\to\R^3\text{ measurable and } \int_{\R^3}\Phi(|E|)<\infty\Big\}$$ is a vector space if $\Phi\in\Delta_2$ globally; in this case it is called an Orlicz space. Moreover, the space $L^\Phi$ (whenever it is actually a vector space) becomes a Banach space (cf. \cite[Theorem III.2.3, Theorem III.3.10]{RaoRen}) if endowed with the norm
$$|E|_{\Phi}:=\inf\Big\{k>0:\int_{\R^3}\Phi\Big(\frac{|E|}{k}\Big)\le 1\Big\}.$$
We can define an equivalent norm on $L^\Phi$ by letting
$$|E|_{\Phi,1}:=\sup\Big\{\int_{\R^3}|E|\, |E'|\, dx: \int_{\R^3}\Psi(|E'|)\, dx \leq 1,\; E'\in L^{\Psi}\Big\},$$
see \cite[Proposition III.3.4]{RaoRen} (note that in \cite{RaoRen} these results are formulated for the space $\cL^\Phi$; however, no distinction needs to be made between $\cL^\Phi$ and $L^\Phi$, see the comment following  \cite[Corollary III.3.12]{RaoRen}). 
Finally, if both $\Phi$ and $\Psi$ satisfy the $\Delta_2$-condition globally, then $L^\Phi$ is reflexive and $L^\Psi$ is its dual \cite[Corollary IV.2.9 and Theorem IV.2.10]{RaoRen}. Similarly, for any measurable $\Om\subset\R^3$ one can define $$L^\Phi(\Om):=\Big\{\xi\colon\Om\to\R\text{ measurable and} \int_{\Om}\Phi(|\xi|)<\infty\Big\}$$
and endow it with the norm $|\cdot|_\Phi$ defined as above. \\
\indent In the lemma below we show that $L^\Phi$ and $L^\Phi(\R^3)^3$ can be identified. The result should be known but we could not find any explicit reference.

\begin{Lem} \label{identify}
The norms of  $L^\Phi= L^\Phi(\R^3,\R^3)$ and $L^\Phi(\R^3)^3$ are equivalent.
\end{Lem}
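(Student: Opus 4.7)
The plan is to choose the natural product norm $\|E\|_{\mathrm{prod}} := \max_{i=1,2,3} |E_i|_\Phi$ on $L^\Phi(\R^3)^3$ (any equivalent product norm would work) and to establish two-sided bounds against $|E|_\Phi$ using only the monotonicity of $\Phi$ on $[0,\infty)$ and its convexity. Recall that because $\Phi$ is even, convex, and satisfies $\Phi(0)=0$, it is nondecreasing on $[0,\infty)$, and by (N1) the space $L^\Phi$ is well-defined so the Luxemburg norm makes sense for both the scalar and the vector-valued formulations.

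For the easy direction, I would observe that for each component $|E_i(x)| \le |E(x)|$ pointwise. Hence, for any $k>0$,
\[
\int_{\R^3}\Phi\Big(\frac{|E_i|}{k}\Big)\,dx \le \int_{\R^3}\Phi\Big(\frac{|E|}{k}\Big)\,dx.
\]
Taking $k=|E|_\Phi$ shows that the left-hand side is at most $1$, so $|E_i|_\Phi \le |E|_\Phi$ for each $i$, and therefore $\|E\|_{\mathrm{prod}} \le |E|_\Phi$.

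For the reverse inequality, the idea is to exploit the pointwise bound $|E| \le |E_1|+|E_2|+|E_3|$ together with Jensen's inequality applied to $\Phi$. Setting $K := \|E\|_{\mathrm{prod}} = \max_i |E_i|_\Phi$ and $k := 3K$, convexity of $\Phi$ gives
\[
\Phi\Big(\frac{|E|}{k}\Big) \le \Phi\Big(\frac{|E_1|+|E_2|+|E_3|}{3K}\Big) \le \frac{1}{3}\sum_{i=1}^{3}\Phi\Big(\frac{|E_i|}{K}\Big).
\]
Since $|E_i|_\Phi \le K$ and $\Phi$ is nondecreasing on $[0,\infty)$, one has $\Phi(|E_i|/K) \le \Phi(|E_i|/|E_i|_\Phi)$ pointwise, whence $\int_{\R^3}\Phi(|E_i|/K)\,dx \le 1$ by the definition of $|E_i|_\Phi$ (when $|E_i|_\Phi = 0$ the corresponding term is zero). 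Summing and dividing by $3$ gives $\int_{\R^3}\Phi(|E|/k)\,dx \le 1$, so $|E|_\Phi \le 3\|E\|_{\mathrm{prod}}$.

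Combining the two inequalities yields $\|E\|_{\mathrm{prod}} \le |E|_\Phi \le 3\|E\|_{\mathrm{prod}}$, proving that the natural norms on $L^\Phi(\R^3)^3$ and on $L^\Phi(\R^3,\R^3)$ are equivalent. I do not anticipate any serious obstacle here: the argument is purely elementary, relying only on convexity and monotonicity of the $N$-function. The only mild care required is handling the degenerate case $|E_i|_\Phi = 0$ (in which $E_i = 0$ a.e., so the corresponding integral is automatically zero), and observing that the same proof clearly works with any norm on $\R^3$ in place of the Euclidean one, which is relevant when later identifying $L^\Phi$ with a product of scalar Orlicz spaces.
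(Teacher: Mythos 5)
Your proof is correct and follows essentially the same route as the paper: the easy direction from $|E_i|\le|E|$ and monotonicity of $\Phi$, and the reverse direction from $|E|\le|E_1|+|E_2|+|E_3|$ together with convexity, yielding the same constant $3$. The only cosmetic difference is that the paper keeps a general $k$ and takes infima rather than plugging in $k=\max_i|E_i|_\Phi$ directly (which quietly uses the standard fact that the modular at the Luxemburg norm is $\le 1$), but this is immaterial.
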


\begin{proof}
In $L^\Phi$ and $L^\Phi(\R^3)$ we use the norm $|\cdot|_\Phi$ defined above and for $E=(E_1,E_2,E_3)\in L^\Phi(\R^3)^3$ we set $|E|_{\Phi,3}:=\max_{i=1,2,3}|E_i|_\Phi$. Since $\Phi$ is increasing on positive numbers, we have
\[
\int_{\R^3}\Phi\left(\frac{|E_i|}k\right)dx \le \int_{\R^3}\Phi\left(\frac{|E|}k\right)dx, \quad k>0,
\]
hence if the second integral is $\le 1$, so is the first one. Taking the infimum over $k>0$ we obtain $|E_i|_\Phi\le|E|_\Phi$ and $|E|_{\Phi,3}\le|E|_\Phi$. On the other hand, since $\Phi$ is convex,
\[
\int_{\R^3}\Phi\left(\frac{|E|}{3k}\right)dx \le \frac13\sum_{i=1}^3 \int_{\R^3}\Phi\left(\frac{|E_i|}k\right)dx,
\]
so $\frac13|E|_\Phi \le \max_{i=1,2,3}|E_i|_\Phi = |E|_{\Phi,3}$.
\end{proof}

\indent Before going on, for the reader's convenience we recall some important facts.
\begin{Lem}\label{AllProp}$\mbox{}$
\begin{itemize}
\item [(i)] The following are equivalent:
	\begin{itemize}
	\item [-] $\Phi\in\Delta_2$ globally;
	\item [-] there exists $K>1$ such that $t\Phi'(t)\le K\Phi(t)$ for every $t\in\R$;
	\item [-] there exists $K'>1$ such that $t\Psi'(t)\ge K'\Psi(t)$ for every $t\in\R$;
	\item [-] $\Psi\in\nabla_2$.
	\end{itemize}
\item [(ii)] For every $E\in L^\Phi$, $E'\in L^\Psi$ there holds
$$\int_{\R^3}|E|\,|E'|\,dx\le\min\{|E|_{\Phi,1}|E'|_{\Psi},|E|_{\Phi}|E'|_{\Psi,1}\}.$$
\item [(iii)] Let $E_n$, $E\in L^\Phi$. Then $|E_n-E|_\Phi\to 0$ implies that $\int_{\R^3}\Phi(|E_n-E|)\,dx\to 0$. If $\Phi\in\Delta_2$ globally, then $\int_{\R^3}\Phi(|E_n-E|)\,dx\to 0$ implies $|E_n-E|_\Phi\to 0$.
\item [(iv)] Let $X\subset L^\Phi$ and suppose $\Phi\in\Delta_2$ globally. Then $X$ is bounded if and only if $\{\int_{\R^3}\Phi(|E|)\,dx:E\in X\}$ is bounded.
\end{itemize}
\end{Lem}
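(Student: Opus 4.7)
The strategy is to treat this lemma as a compilation of standard Orlicz-space facts and verify each item either by a direct convexity argument or by quoting the corresponding result in \cite{RaoRen}. Since none of the four statements depends on the specifics of the curl-curl problem, the task is purely one of organisation, with the only care being to track when the $\Delta_2$-condition is actually invoked.

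For (i), the first equivalence should come from convexity together with the representation $\Phi(t)=\int_0^{|t|}\Phi'(s)\,ds$. In one direction, convexity of $\Phi$ with $\Phi(0)=0$ gives $\Phi(2t)\ge\Phi(t)+t\Phi'(t)$ for $t>0$, so the $\Delta_2$-condition with constant $K$ forces $t\Phi'(t)\le(K-1)\Phi(t)$. Conversely, the differential inequality $\Phi'(t)/\Phi(t)\le K/t$ integrates from $t$ to $2t$ to give $\Phi(2t)\le 2^K\Phi(t)$. The equivalence $\Phi\in\Delta_2\iff\Psi\in\nabla_2$ is the standard duality between the two growth conditions, proved in \cite[Section II.3]{RaoRen}; once it is available, the symmetric differential characterisation $t\Psi'(t)\ge K'\Psi(t)$ follows by applying the first equivalence to the complementary pair $(\Psi,\Phi)$ and using Young's equality $\Phi(\Psi'(t))+\Psi(t)=t\Psi'(t)$.

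For (ii), the Hölder-type estimate is immediate from the definition of the Orlicz norm $|\cdot|_{\Phi,1}$: after normalising $E'$ by its Luxemburg norm $|E'|_\Psi$ so that $\int_{\R^3}\Psi(|E'|/|E'|_\Psi)\,dx\le 1$, the supremum defining $|E|_{\Phi,1}$ yields $\int_{\R^3}|E|\,|E'|\,dx\le|E|_{\Phi,1}|E'|_\Psi$. The symmetric bound follows by exchanging the roles of $\Phi$ and $\Psi$; both are collected in \cite[Proposition III.3.1]{RaoRen}.

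For (iii) and (iv) the plan is to exploit the interaction between the Luxemburg norm and the modular. The first implication in (iii) uses convexity in the form $\Phi(|E_n-E|)\le|E_n-E|_\Phi\,\Phi(|E_n-E|/|E_n-E|_\Phi)$, valid once $|E_n-E|_\Phi\le 1$, which upon integrating yields $\int_{\R^3}\Phi(|E_n-E|)\,dx\le|E_n-E|_\Phi\to 0$. The converse in (iii) and the statement in (iv) both require the $\Delta_2$-condition, which iterates to polynomial control $\Phi(2^kt)\le K^k\Phi(t)$ and hence makes the modular and the norm equivalent notions of smallness and of boundedness; the precise formulations are in \cite[Theorem III.3.13 and Corollary III.4.15]{RaoRen}. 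There is no substantial obstacle here: the delicate point is only to remember that without $\Delta_2$ the modular convergence is strictly weaker than the Luxemburg-norm convergence, so in (iii) the two implications really do use different assumptions.
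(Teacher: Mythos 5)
Your proposal is correct and takes essentially the same route as the paper, whose proof of this lemma consists precisely of citations to the corresponding results in Rao--Ren (Theorem II.3.3 for (i), Proposition III.3.1 and (III.3.17) for (ii), Theorem III.4.12 for (iii), Corollary III.4.15 for (iv)). The additional convexity sketches you supply for the easy directions of (i)--(iii) are sound and merely make explicit what the cited references contain.
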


\begin{proof}
(i) follows from \cite[Theorem II.3.3]{RaoRen}; (ii) follows from \cite[Proposition III.3.1 and Formula (III.3.17)]{RaoRen}; (iii) follows from \cite[Theorem III.4.12]{RaoRen}; (iv) follows from \cite[Corollary III.4.15]{RaoRen}.
\end{proof}

From now on we assume (F1)--(F5), (N1)--(N3), $\Phi$ will denote an $N$-function as in (F3) and $\Psi$ will denote its complementary function. Moreover, we will denote by $|\cdot|_\Phi$ any of the two (equivalent) norms defined above, unless differently required.

Let $\D(\curl,\Phi)$ be the completion of $\mathcal{C}_0^{\infty}(\R^3,\R^3)$ with respect to the norm
$$\|E\|_{\curl,\Phi}:=\big(|\curlop E|_2^2+|E|_{\Phi}^2\big)^{1/2}.$$
 The subspace of divergence-free vector fields is defined by
 \[
 \begin{aligned}
 \V
 &:= \left\{v\in \D(\curl,\Phi):\; \int_{\R^3}\langle v,\nabla \vp\rangle\,dx=0
 \text{ for any }\vp\in \cC^\infty_0(\R^3)\right\}\\
 &= \{v\in \D(\curl,\Phi):\; \div v=0\}
 \end{aligned}
 \]
 where $\div v$ is to be understood in the distributional sense. Let $\cD:=\D^{1,2}(\R^3,\R^3)$
 be the completion of $\cC^{\infty}_0(\R^3,\R^3)$ with respect to the norm 
 $$\|u\|_{\D}:=|\nabla u|_2,$$
 and let $\W$ be the closure of
 $\big\{\nabla\vp: \vp\in \cC^{\infty}_0(\R^3)\big\}$ 
 in $L^{\Phi}$.
 \begin{Lem}\label{Emb}
 $L^6(\R^3,\R^3)$ is continuously embedded in $L^{\Phi}$.
 \end{Lem}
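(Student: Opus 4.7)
The plan is to first show that the growth condition (N2) forces a global pointwise bound $\Phi(s)\le C s^6$ for all $s\ge 0$, and then deduce the continuous embedding by a simple scaling argument using the definition of $|\cdot|_\Phi$.

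To establish the pointwise bound, I would consider the function $h(s):=\Phi(s)/s^6$ for $s>0$. Since $\Phi$ is convex and finite-valued on $\R$, it is continuous, so $h$ is continuous on $(0,\infty)$. By (N2), $\lim_{s\to 0^+}h(s)=\lim_{s\to\infty}h(s)=0$, so $h$ extends continuously to the compactification $[0,\infty]$ with value $0$ at both endpoints. Hence $h$ is bounded on $(0,\infty)$: there exists $C>0$ such that
\[
\Phi(s)\le C s^6\quad\text{for all }s\ge 0.
\]

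With this bound in hand, take any $E\in L^6(\R^3,\R^3)$ with $E\neq 0$. For every $k>0$,
\[
\int_{\R^3}\Phi\!\left(\frac{|E|}{k}\right)dx\;\le\;C k^{-6}\int_{\R^3}|E|^6\,dx\;=\;\frac{C\,|E|_6^{\,6}}{k^6}.
\]
Choosing $k=C^{1/6}|E|_6$ makes the right-hand side equal to $1$, so by the definition of the Luxemburg norm $|\cdot|_\Phi$ we conclude $|E|_\Phi\le C^{1/6}|E|_6$. By Lemma \ref{identify} this is equivalent (up to a constant) to the componentwise $L^\Phi$ norm, yielding the continuous embedding $L^6(\R^3,\R^3)\hookrightarrow L^\Phi$.

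There is essentially no hard step here; the only point deserving care is justifying the global bound $\Phi(s)\le C s^6$ from (N2), which rests on the continuity of $\Phi$ together with the two-sided vanishing of $\Phi(s)/s^6$ to pass from local to global control. Everything else reduces to the homogeneity of the Luxemburg norm.
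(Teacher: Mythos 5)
Your proof is correct. The key step -- extracting the global pointwise bound $\Phi(s)\le Cs^6$ from (N2) via continuity of $\Phi$ and the two-sided vanishing of $\Phi(s)/s^6$ -- is exactly the one the paper uses. Where you differ is in how you convert this bound into the norm inequality: the paper appeals to Lemma \ref{AllProp}(iii), i.e.\ it argues that $\int_{\R^3}\Phi(|E_n|)\,dx\to 0$ forces $|E_n|_\Phi\to 0$, which relies on the $\Delta_2$-condition from (N1) and only yields continuity of the inclusion qualitatively (via linearity and sequential continuity at $0$). Your route instead plugs the pointwise bound directly into the definition of the Luxemburg norm and optimizes over the scaling parameter $k$, giving the explicit estimate $|E|_\Phi\le C^{1/6}|E|_6$. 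This is slightly more self-contained -- it does not use $\Delta_2$ or the modular-to-norm convergence lemma at all -- and it produces a quantitative operator norm for the embedding, at the cost of a line of computation the paper compresses into a citation. Both arguments are complete.
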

\begin{proof}
In view of (N2) it is clear that
$\Phi(t)\leq C|t|^6$ for any $t\in\R$ and some $C>0$. So we can conclude by Lemma \ref{AllProp} (iii).
\end{proof}

The following Helmholtz decomposition holds.
\begin{Lem}\label{defof} $\V$ and $\cW$ are closed subspaces of $\D(\curl,\Phi)$  and
	\begin{equation}\label{HelmholzDec}
	\D(\curl,\Phi)=\V\oplus \W.
	\end{equation}
	Moreover, $\cV\subset\cD$ and the norms $\|\cdot\|_{\cD}$ and $\|\cdot\|_{curl,\Phi}$ are equivalent in $\V$.
\end{Lem}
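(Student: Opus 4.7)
My plan is to verify the four ingredients in order---closedness of $\cV$, closedness of $\cW$ inside $\D(\curl,\Phi)$, triviality of $\cV\cap\cW$, and the direct sum decomposition---and then deduce the norm equivalence on $\cV$.

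Closedness of $\cV$ is immediate: for each $\varphi\in\cC_0^\infty(\R^3)$ the functional $E\mapsto\int_{\R^3}\langle E,\nabla\varphi\rangle\,dx$ is continuous on $\D(\curl,\Phi)\hookrightarrow L^\Phi$ by Lemma \ref{AllProp}(ii) (since $\nabla\varphi\in L^\Psi$), so $\cV$ is the intersection of kernels of continuous linear forms. For $\cW$, the key observation is $\curl(\nabla\varphi)=0$: the $\D(\curl,\Phi)$- and $L^\Phi$-norms coincide on $\{\nabla\varphi:\varphi\in\cC_0^\infty(\R^3)\}$. Hence any $L^\Phi$-Cauchy sequence of such gradients is Cauchy in $\D(\curl,\Phi)$ with the same limit, so $\cW\subset\D(\curl,\Phi)$ and $\cW$ is closed in $\D(\curl,\Phi)$.

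For $\cV\cap\cW=\{0\}$, let $u$ lie in the intersection. By the previous observation $u=\lim_n\nabla\varphi_n$ in $\D(\curl,\Phi)$, so $\curl u=\lim\curl\nabla\varphi_n=0$; combined with $\div u=0$ this gives $-\Delta u=\curl\curl u-\nabla\div u=0$, hence by elliptic regularity $u$ is smooth and harmonic. Iterating the $\nabla_2$-condition (Lemma \ref{AllProp}(i)) yields $\Phi(t)\ge ct^p$ for $t\ge t_0$ and some $p>1$, so $|u|\chi_{\{|u|\ge t_0\}}\in L^p(\R^3)$. Together with
$$
|u(x)|\le\frac{1}{|B_r|}\int_{B(x,r)}|u|\,dy\le t_0+C|B_r|^{-1/p},\qquad r>0,
$$
letting $r\to\infty$ forces $|u|\le t_0$ everywhere. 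Classical Liouville then gives $u$ constant, and the only constant in $L^\Phi(\R^3,\R^3)$ is $0$.

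For the decomposition I treat $E\in\cC_0^\infty(\R^3,\R^3)$ first: solve $\Delta\phi=\div E$ via the Newtonian potential. Since $\int_{\R^3}\div E=0$, dipole expansion yields $\phi(x)=O(|x|^{-2})$ and $\nabla\phi(x)=O(|x|^{-3})$, so $\nabla\phi\in L^p(\R^3)$ for every $p>1$. A standard cutoff $\phi_n:=\chi(x/n)\phi$, with $\chi\in\cC_0^\infty(\R^3)$ and $\chi\equiv 1$ on $B_1$, gives $\nabla\phi_n\in\nabla\cC_0^\infty(\R^3)$ and $\nabla\phi_n\to\nabla\phi$ in $L^6$ by direct estimate on the annuli $B_{2n}\setminus B_n$, hence in $L^\Phi$ by Lemma \ref{Emb}; thus $\nabla\phi\in\cW$ and $v:=E-\nabla\phi\in\cV$. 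For general $E\in\D(\curl,\Phi)$, pick $E_n\in\cC_0^\infty$ with $E_n\to E$ and decompose $E_n=v_n+\nabla\phi_n$. The Fourier identity $|\nabla u|_2=|\curl u|_2$---valid for smooth $u\in L^2$ with $\div u=0$ and $\curl u\in L^2$---applied to $u=v_n-v_m$ gives
$$
|\nabla(v_n-v_m)|_2=|\curl(E_n-E_m)|_2\le\|E_n-E_m\|_{\curl,\Phi},
$$
so $\{v_n\}$ is Cauchy in $\cD$, and therefore in $L^6\hookrightarrow L^\Phi$ by Sobolev and Lemma \ref{Emb}. Consequently $v_n\to v\in\cV$ in $\D(\curl,\Phi)$, $\nabla\phi_n=E_n-v_n\to w\in\cW$ in $L^\Phi$, and $E=v+w$ with uniqueness from the intersection result. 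Passing to the limit in the same inequalities yields $\cV\subset\cD$ with $|\curl v|_2=|\nabla v|_2$ and $|v|_\Phi\le C|v|_6\le C'|\nabla v|_2$, proving the norm equivalence on $\cV$.

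The main obstacle is promoting the explicit decomposition for compactly supported data to the full Orlicz space $\D(\curl,\Phi)$: a direct approach would require boundedness of the Riesz transforms on $L^\Phi$, which is true under (N1) but depends on non-trivial interpolation machinery. I bypass this by funnelling the $L^\Phi$-control of the divergence-free part through the Sobolev chain $\cD\hookrightarrow L^6\hookrightarrow L^\Phi$, so that the only non-trivial estimate needed is the elementary Fourier identity for divergence-free $L^2$-fields.
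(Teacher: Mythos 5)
Your proof is correct, and for the Helmholtz decomposition itself it takes essentially the same route as the paper: subtract the gradient of the Newtonian potential of $\div E_n$ from an approximating sequence $E_n\in\cC_0^\infty(\R^3,\R^3)$, use the $L^2$ identity $|\nabla u|_2=|\curl u|_2$ for divergence-free fields to make the divergence-free parts Cauchy in $\cD$, and pass to the limit using closedness of $\cV$ and $\cW$. You diverge in two places, both to your advantage. First, you verify $\nabla\phi\in\cW$ explicitly via the decay estimates and a dilation cutoff; the paper instead cites Iwaniec for $\nabla\vp_n^2\in L^r$ and then invokes closedness of $\cW$, leaving tacit the step that $\nabla\vp_n^2$ (a gradient of a \emph{non}-compactly supported function) is actually an $L^\Phi$-limit of gradients from $\cC_0^\infty$ --- your cutoff supplies exactly that. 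Second, and more substantially, your treatment of $\cV\cap\cW=\{0\}$ is genuinely different: the paper uses Leinfelder's lemma to write $v=\nabla\xi$, deduces harmonicity, and integrates by parts, which presupposes $v\in\cD$; you instead get $\Delta u=0$ directly from $\curl u=\div u=0$, extract a power lower bound $\Phi(t)\ge ct^p$ with $p>1$ by iterating the $\nabla_2$-condition (note this is assumption (N1) itself rather than Lemma \ref{AllProp}(i)), and conclude by the mean value property and Liouville using only $u\in L^\Phi$. This makes the triviality of the intersection logically independent of the inclusion $\cV\subset\cD$, which is established only later, so your ordering is cleaner than the paper's. The one point worth spelling out at the end is that the construction a priori shows only that the $\cV$-component \emph{produced by the decomposition} lies in $\cD$; for an arbitrary $v'\in\cV$ you should decompose $v'=v+w$ and observe $v'-v\in\cV\cap\cW=\{0\}$, whence $v'=v\in\cD$ and the norm equivalence follows from $|\curl v'|_2=|\nabla v'|_2$ together with $|v'|_\Phi\le C|v'|_6\le C'|\nabla v'|_2$.
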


\begin{proof} Take any $w\in\W$ and a sequence $\vp_n\in\cC_0^{\infty}(\R^3)$ such that $|w -\nabla\vp_n|_{\Phi}\to 0$.
Then for any $\psi\in\cC_0^{\infty}(\R^3,\R^3)$
$$
\int_{\R^3}\langle w,\curlop\psi\rangle\, dx
=\lim_{n\to\infty}\int_{\R^3}\langle\nabla \vp_n,\curlop\psi\rangle\, dx=
\lim_{n\to\infty}\int_{\R^3}\langle\curlop(\nabla \vp_n),\psi\rangle\, dx=0$$
where we have used Lemma \ref{AllProp} (ii) and the fact that $\curlop\psi \in L^\Psi$. Hence $\curlop  w=0$ in the sense of distributions and
$\|w\|_{\curl,\Phi}=|w|_{\Phi}$. Therefore $\W$ is closed in $\D(\curl,\Phi)$; moreover, we easily see that also $\cV$ is closed in $\D(\curl,\Phi)$.\\
\indent Now, take any $E\in \D(\curl,\Phi)$ and $\vp_n\in\cC_0^{\infty}(\R^3,\R^3)$ such that $\vp_n\to E$ in $\D(\curl,\Phi)$. Let $\vp_n^2\in\cC^{\infty}(\R^3)$ be the Newtonian potential of $\div(\vp_n)$, i.e. $\vp_n^2$ solves $\Delta \vp_n^2 = \div(\vp_n)$. Note  that the derivative $\partial_i\vp_n^2$ is the Newtonian potential of $\div(\partial_i\vp_n)$. Since $\vp_n\in\cC_0^{\infty}(\R^3)$, then by \cite[Proposition 1]{Iwaniec}, $\nabla\vp_n^2$ and $ \nabla(\partial_i\vp_n^2) \in L^r(\R^3,\R^3)$ for every $r\in(1,\infty)$. Hence by Lemma \ref{Emb}
$$
\nabla \vp_n^2\in  L^6(\R^3,\R^3)\subset L^{\Phi}
$$ 
and $\vp_n^1:=\vp_n-\nabla \vp_n^2\in L^{\Phi}$. Moreover, $\vp_n^1$ and $\partial_i\vp_n^1\in L^r(\R^3,\R^3)$.
We also have  $\curlop \vp_n^1 = \curlop\vp_n$  and $\div(\vp_n^1)=0$ pointwise. Using these two equalities and integrating by parts gives
 $|\nabla\vp^1_n|_2 = |\curlop \vp^1_n|_2 = |\curlop\vp_n|_2$.  It follows that for $m,n\ge 1$,
$$|\nabla(\vp_n^1-\vp_m^1)|_{2}=|\curlop(\vp_n^1-\vp_m^1)|_{2}=|\curlop(\vp_n-\vp_m)|_{2}\leq \|\vp_n-\vp_m\|_{\curl,\Phi}.$$
Thus $(\vp_n^1)$ is a Cauchy sequence in $\D$. Let $v:=\lim_{n\to\infty}\vp_n^1$ in $\cD$. Then
$$\int_{\R^3}\langle v,\nabla \vp\rangle\,dx=\lim_{n\to\infty}\int_{\R^3}\langle \vp^1_n,\nabla \vp\rangle\,dx=0$$
for any $\vp\in\cC_0^{\infty}(\R^3)$, hence $\div v =0$ and $v\in\cV$.
Moreover,
$$|\curlop(\vp_n^1-v)|_{2}=|\nabla(\vp_n^1-v)|_{2}\to 0,$$
so $\vp_n^1\to v$ in  $\D(\curl,\Phi)$ and
$\nabla \vp^2_n=\vp_n-\vp^1_n\to E-v$ in $\D(\curl,\Phi)$. Since $\cW$ is closed in $\D(\curl,\Phi)$, then $E-v\in \cW$ and we get the decomposition
$$E=v+(E-v)\in \cV+\cW.$$
\indent Now take $v\in \cV\cap\W$. Then $\curlop v =0$, so by \cite[Lemma 1.1(i)]{Leinfelder}, $v=\nabla\xi$ for some $\xi\in W^{1,6}_{loc}(\R^3)$. Since $\div v = 0$, $\xi$ is harmonic and therefore so is $v$. Hence 
\[
0 = -\int_{\R^3} \langle v, \Delta v\rangle\,dx = \int_{\R^3}|\nabla v|^2\,dx
\] 
(integration by parts is allowed because $v\in D^{1,2}(\R^3,\R^3)$). So $v=0$; therefore  $\cV\cap\W=\{0\}$ and we obtain \eqref{HelmholzDec}.
\\
\indent The equivalence of norms follows from Lemma \ref{Emb}.
\end{proof}

Observe that in view of Lemma \ref{defof} and Lemma \ref{Emb}, $\cV$ is continuously embedded in $L^\Phi$.\\
\indent We introduce a norm in $\V\times\W$ by the formula
\[
\|(v,w)\|:=\bigl(\|v\|_{\cD}^2+|w|_{\Phi}^2\bigr)^{\frac{1}{2}}
\]
and consider the energy functional defined by \eqref{eq:action} on $\cD(\curl,\Phi)$,
and 
\begin{equation}\label{eqJ}
\J(v,w):=\frac{1}{2}\int_{\R^3}|\nabla v|^2\,dx-\int_{\R^3}F(x,v+w)\,dx.
\end{equation}
defined on $\V\times\W$.
We have that $\J'$ is well defined and $\J$ is of class $\cC^1$ due to the following lemma.

\begin{Lem}\label{forC1}
If $u\in L^\Phi$, then 
$$\int_{\R^3}\Psi\bigl(\Phi'(|u|)\bigr)\,dx
\le C\int_{\R^3}\Phi(|u|)\,dx<\infty$$
for some constant $C>0$.
\end{Lem}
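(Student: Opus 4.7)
The plan is to use the Young (Fenchel) equality between complementary $N$-functions to rewrite $\Psi(\Phi'(s))$ as a combination of $s\Phi'(s)$ and $\Phi(s)$, and then absorb the first term using the $\Delta_2$-condition.

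\textbf{Step 1: Pointwise identity.} For a complementary pair $(\Phi, \Psi)$ of $N$-functions, the classical Young equality states that
\[
\Phi(s) + \Psi(\Phi'(s)) = s\Phi'(s) \qquad \text{for all } s \ge 0,
\]
because $\Phi'$ and $\Psi'$ are mutually inverse on $[0,\infty)$ (see \cite[Section I.3]{RaoRen}). Equivalently,
\[
\Psi(\Phi'(s)) = s\Phi'(s) - \Phi(s).
\]

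\textbf{Step 2: Apply $\Delta_2$.} Since $\Phi$ satisfies the $\Delta_2$-condition globally, Lemma \ref{AllProp}(i) gives a constant $K > 1$ such that $s\Phi'(s) \le K\Phi(s)$ for every $s \ge 0$. Inserting this into the identity from Step 1 yields
\[
\Psi(\Phi'(s)) \le (K-1)\Phi(s) \qquad \text{for all } s \ge 0.
\]

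\textbf{Step 3: Integrate.} Applying this pointwise inequality with $s = |u(x)|$ and integrating over $\R^3$ gives
\[
\int_{\R^3} \Psi\bigl(\Phi'(|u|)\bigr)\,dx \le (K-1)\int_{\R^3} \Phi(|u|)\,dx,
\]
and the right-hand side is finite by the very definition of $L^\Phi$ (or by Lemma \ref{AllProp}(iv), since $\Phi\in\Delta_2$ globally so $u\in L^\Phi$ is equivalent to $\int_{\R^3}\Phi(|u|)\,dx<\infty$). This is the claim with $C := K - 1$.

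No genuine obstacle is expected: the entire argument is a one-line consequence of Young's equality combined with $\Delta_2$. The only mild point to verify is that Young's equality is available in the form used above at every $s\ge 0$, which is fine here because $\Phi\in\cC^1$ is strictly convex with $\Phi'$ continuous and strictly increasing on $[0,\infty)$, so $\Phi'$ and $\Psi'$ are genuine inverses and the supremum defining $\Psi(\Phi'(s))$ is attained at $s$.
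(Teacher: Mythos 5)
Your proof is correct and follows essentially the same route as the paper: both reduce to the pointwise bound $\Psi\bigl(\Phi'(s)\bigr)\le s\Phi'(s)$ followed by $s\Phi'(s)\le K\Phi(s)$ from Lemma \ref{AllProp}(i). The only difference is cosmetic — you get the first bound from Young's equality (yielding the slightly cleaner constant $K-1$), while the paper derives it from $t\Psi'(t)\ge K'\Psi(t)$ together with $\Psi'\bigl(\Phi'(s)\bigr)\le s$.
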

\begin{proof}
Since $\Phi\in\Delta_2$, it follows using Lemma \ref{AllProp}(i) and recalling $\Psi'(t)=\inf\{s\ge 0:\Phi'(s)>t\}$ that
\[
\int_{\R^3}\Psi\bigl(\Phi'(|u|)\bigr)\,dx \le \frac1{K'}\int_{\R^3}\Phi'(|u|)\Psi'(\Phi'(|u|))\,dx \le \frac1{K'}\int_{\R^3}\Phi'(|u|)|u|\,dx \le \frac K{K'}\int_{\R^3}\Phi(|u|)\,dx.
\]
\end{proof}

\begin{Prop} \label{classC1}
$\J'$ is well defined and $\J$ is of class $C^1$.
\end{Prop}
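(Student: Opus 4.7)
The quadratic part $v\mapsto \frac12|\nabla v|_2^2$ is plainly of class $\cC^1$ on $\cV\subset\cD$ with derivative $h\mapsto\int_{\R^3}\langle\nabla v,\nabla h\rangle\,dx$, so the whole task reduces to showing that $I(E):=\int_{\R^3}F(x,E)\,dx$ is well-defined and of class $\cC^1$ on $L^\Phi$; the full statement then follows from the chain rule because Lemma \ref{defof} together with Lemma \ref{Emb} yields that $(v,w)\mapsto v+w$ is a continuous linear map $\cV\times\cW\to L^\Phi$.

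\emph{Well-posedness and Gateaux derivative.} Integrating $F(x,u)=\int_0^1\langle f(x,tu),u\rangle\,dt$ and using (F3) gives $0\le F(x,u)\le c_1\Phi(|u|)$ (note $F(x,0)=0$ follows from (F3) and (F4) evaluated at $u=0$); hence $|I(E)|\le c_1\int_{\R^3}\Phi(|E|)\,dx<\infty$ for every $E\in L^\Phi$ by Lemma \ref{AllProp}(iv). For $E,h\in L^\Phi$ and $|t|\le 1$ the mean value theorem and the monotonicity of $\Phi'$ imply
\[
\left|\frac{F(x,E+th)-F(x,E)}{t}\right|\le c_1\Phi'(|E|+|h|)\,|h|,
\]
and the right hand side belongs to $L^1(\R^3)$ by Lemma \ref{AllProp}(ii) together with Lemma \ref{forC1} applied to $|E|+|h|\in L^\Phi$. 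Dominated convergence then yields
\[
I'(E)[h]=\int_{\R^3}\langle f(x,E),h\rangle\,dx,
\]
and another application of Lemma \ref{AllProp}(ii) and Lemma \ref{forC1} shows $|I'(E)[h]|\le C|h|_\Phi$, so $I'(E)\in(L^\Phi)^\ast$.

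\emph{Continuity of $I'$.} The main step is to prove that the Nemytskii operator $E\mapsto f(\cdot,E)$ is continuous from $L^\Phi$ into $L^\Psi$. Assume $E_n\to E$ in $L^\Phi$. Passing to an arbitrary subsequence and using a standard Orlicz version of the converse to Lebesgue's theorem, one finds a further subsequence and a function $g\in L^\Phi$ with $E_n\to E$ a.e.\ and $|E_n|\le g$ a.e. By (F1), $f(x,E_n)\to f(x,E)$ a.e., so $\Psi(|f(x,E_n)-f(x,E)|)\to0$ a.e., and by (F3), $\Delta_2$ for $\Psi$ (which holds because $\Phi\in\nabla_2$ via Lemma \ref{AllProp}(i)) and monotonicity of $\Phi'$,
\[
\Psi(|f(x,E_n)-f(x,E)|)\le K\bigl(\Psi(c_1\Phi'(g))+\Psi(c_1\Phi'(|E|))\bigr),
\]
whose right hand side is in $L^1$ by Lemma \ref{forC1}. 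Dominated convergence yields $\int_{\R^3}\Psi(|f(\cdot,E_n)-f(\cdot,E)|)\,dx\to 0$, and Lemma \ref{AllProp}(iii) (applicable because $\Psi\in\Delta_2$) then gives $|f(\cdot,E_n)-f(\cdot,E)|_\Psi\to 0$. Since this conclusion holds along any subsequence, the full sequence converges, and Lemma \ref{AllProp}(ii) turns this into $\|I'(E_n)-I'(E)\|_{(L^\Phi)^\ast}\to 0$.

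The main obstacle is this last continuity statement: although standard in $L^p$ spaces, in the Orlicz setting it requires precisely the twin conditions $\Phi\in\Delta_2\cap\nabla_2$ of (N1), first to guarantee that $L^\Phi$ is a reflexive vector space in which norm boundedness and modular boundedness agree (so that the growth bound (F3) produces $\Phi'(|E|)\in L^\Psi$ via Lemma \ref{forC1}), and second to identify modular convergence in $L^\Psi$ with norm convergence. Granting this, the chain rule with the linear continuous map $(v,w)\mapsto v+w$ concludes the argument.
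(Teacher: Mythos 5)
Your proof is correct and follows the same route as the paper: reduce to the functional $I$ on $L^\Phi$, obtain the candidate derivative from (F3) together with the H\"older inequality of Lemma \ref{AllProp}(ii) and Lemma \ref{forC1}, and then establish continuity of the Nemytskii operator $E\mapsto f(\cdot,E)$ from $L^\Phi$ to $L^\Psi$. The only difference is that the paper delegates the differentiability and continuity argument to \cite[Lemma 2.1]{Clement}, whereas you write it out in full (correctly using $\Phi\in\nabla_2\Leftrightarrow\Psi\in\Delta_2$ and the Orlicz dominated-subsequence argument).
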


\begin{proof}
First we see that for every $v$, $v'\in\V$ and $w$, $w'\in\W$ there holds
\[
\begin{split}
\bigg|\int_{\R^3}\langle f(x,v+w) & ,v'+w'\rangle\,dx\bigg|\le\int_{\R^3}|f(x,v+w)||v'+w'|\,dx\le\\
& \le c_1\int_{\R^3}\Phi'(|v+w|)|v'+w'|\,dx\le C|\Phi'(|v+w|)|_\Psi|v'+w'|_\Phi
\end{split}
\]
for some $C>0$ by Lemma \ref{AllProp} (ii) and because $|\Phi'(|v+w|)|_\Psi<+\infty$ according to Lemma \ref{forC1}. Now we can use the argument of \cite[Lemma 2.1]{Clement} to show that $I\in C^1(L^\Phi,\R)$ where $I(v+w) := \int_{\R^3}F(x,v+w)\,dx$. Employing Lemma \ref{Emb}, it follows that $\J\in C^1(\cV\times \cW,\R)$.
\end{proof}

\begin{Prop}\label{PropSolutE}
Let $E=v+w\in\V\oplus\W$. Then $(v,w)$ is a critical point of $\J$ if and only if $E$ is a critical point of $\cE$ if and only if $\E$ is a weak solution to \eqref{eq}, i.e. $$\int_{\R^3}\langle E,\nabla\times\nabla\times\vp\rangle\,dx=\int_{\R^3}\langle f(x,E),\vp\rangle\,dx\quad\hbox{ for any }\vp\in\cC^\infty_0(\R^3,\R^3).$$
\end{Prop}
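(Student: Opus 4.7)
The plan is to prove the two claimed equivalences separately, using the Helmholtz-type decomposition from Lemma \ref{defof} as the backbone.

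For the equivalence between critical points of $\J$ and of $\cE$, the key identity is $\cE(v+w) = \J(v,w)$ for every $(v,w) \in \V\times\W$. This follows from two facts already contained in the proof of Lemma \ref{defof}: every $w \in \W$ satisfies $\curlop w = 0$ in the sense of distributions (first paragraph of that proof), so $\curlop(v+w) = \curlop v$; and for $v \in \V \subset \cD$ the identity $|\curlop v|_2 = |\nabla v|_2$ holds, because $\div v = 0$ and this reduces to the computation $|\nabla\vp_n^1|_2 = |\curlop\vp_n^1|_2$ appearing in that same proof (made rigorous by density). Substituting into \eqref{eq:action} reproduces \eqref{eqJ}. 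Since Lemma \ref{defof} moreover exhibits $\D(\curl,\Phi) = \V \oplus \W$ as a topological direct sum of closed subspaces, the addition map $T\colon(\V\times\W,\|\cdot\|) \to (\D(\curl,\Phi),\|\cdot\|_{\curl,\Phi})$ is a continuous linear bijection, hence a homeomorphism by the open mapping theorem; thus $\J = \cE \circ T$ implies that critical points correspond.

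For the equivalence between critical points of $\cE$ and weak solutions of \eqref{eq}, I would compute, for $\vp \in \cC_0^\infty(\R^3,\R^3)$,
\[
\cE'(E)[\vp] = \int_{\R^3}\langle\curlop E,\curlop\vp\rangle\,dx - \int_{\R^3}\langle f(x,E),\vp\rangle\,dx,
\]
and then argue that the first term equals $\int_{\R^3}\langle E,\curlop\curlop\vp\rangle\,dx$. The latter identity is obtained by approximating $E$ by a sequence $E_n \in \cC_0^\infty(\R^3,\R^3)$ with $E_n \to E$ in $\D(\curl,\Phi)$ (possible by the very definition of this space), performing classical integration by parts on each $E_n$, and passing to the limit: $\curlop E_n \to \curlop E$ in $L^2$ paired against $\curlop\vp \in L^2$, and $E_n \to E$ in $L^\Phi$ paired against $\curlop\curlop\vp$, which is smooth with compact support and hence lies in $L^\Psi$ (so Lemma \ref{AllProp}(ii) applies). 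Thus $\cE'(E)[\vp] = 0$ for every test field $\vp$ is exactly the weak formulation. Since $\cC_0^\infty(\R^3,\R^3)$ is dense in $\D(\curl,\Phi)$ and $\cE'(E)$ is continuous, vanishing on test fields extends to vanishing on all of $\D(\curl,\Phi)$, closing the chain.

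The verification is essentially routine and carries no genuine obstacle; the only mild technical points are the application of the open mapping theorem to promote $(v,w) \mapsto v+w$ to a homeomorphism and the extension of classical integration by parts from $\cC_0^\infty(\R^3,\R^3)$ to $\D(\curl,\Phi)$, both immediate consequences of Lemma \ref{defof} and of the way $\D(\curl,\Phi)$ is defined as a completion.
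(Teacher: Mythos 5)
Your proposal is correct and follows essentially the same route as the paper: the first equivalence rests on $\curlop w=0$ for $w\in\W$, the identity $|\curlop v|_2=|\nabla v|_2$ on $\V$, and the Helmholtz decomposition of Lemma \ref{defof} (the paper compares $\J'(v,w)[(v',w')]$ and $\cE'(E)[E']$ termwise rather than invoking the open mapping theorem, but this is the same idea), while the second equivalence is exactly the integration by parts $\int_{\R^3}\langle\curlop E,\curlop\vp\rangle\,dx=\int_{\R^3}\langle E,\curlop\curlop\vp\rangle\,dx$ justified by density. Your added details (the homeomorphism $T$ and the density/duality argument for the integration by parts) are correct fillings-in of steps the paper leaves implicit.
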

\begin{proof}
For the first equivalence, let $E'=v'+w'\in\V\oplus\W$. Then we have
\[
\int_{\R^3}\langle f(x,v+w),v'+w'\rangle\,dx=\int_{\R^3}\langle f(x,E),E'\rangle\,dx
\]
and, since $\curlop w=\curlop w'=0$,
\[
\int_{\R^3}\langle\curlop v,\curlop v'\rangle\,dx=\int_{\R^3}\langle\curlop E,\curlop E'\rangle\,dx
\]
so that
\[
\int_{\R^3}\langle\curlop v,\curlop v'\rangle\,dx=\int_{\R^3}\langle f(x,v+w),v'+w'\rangle\,dx\Leftrightarrow\int_{\R^3}\langle\curlop E,\curlop E'\rangle\,dx=\int_{\R^3}\langle f(x,E),E'\rangle\,dx
\]
and the conclusion follows from Lemma \ref{defof}.
For the second equivalence we just need to observe that for every $\vp\in\cC^\infty_0(\R^3,\R^3)$
\[
\int_{\R^3}\langle\curlop E,\curlop\vp\rangle\,dx=\int_{\R^3}\langle E,\curlop\curlop\vp\rangle\,dx.
\]
\end{proof}

\section{Critical point theory}\label{sec:criticaslpoitth}

We recall the abstract setting from \cite{BartschMederskiJFA,BartschMederski}.
Let $X$ be a reflexive Banach space with the norm $\|\cdot\|$ and a topological direct sum decomposition $X=X^+\oplus\tX$, where $X^+$ is a Hilbert space with a scalar product $\langle .\,,.\rangle$. For $u\in X$ we denote by $u^+\in X^+$ and $\tu\in\tX$ the corresponding summands so that $u = u^++\tu$. We may assume $\langle u,u \rangle = \|u\|^2$ for any
$u\in X^+$ and $\|u\|^2 = \|u^+\|^2+\|\tu \|^2$. The topology $\cT$ on $X$ is defined as the product of the norm topology in $X^+$ and the weak topology in $\tX$. Thus $u_n\cTto u$ is equivalent to $u_n^+\to u^+$ and $\tu_n\weakto\tu$.

Let $\cJ$ be a functional on $X$ of the form 
\begin{equation}\label{EqJ}
\cJ(u) = \frac12\|u^+\|^2-\cI(u) \quad\text{for $u=u^++\tu \in X^+\oplus \tX$}.
\end{equation}
The set
\begin{equation}\label{eq:ConstraintM}
\cM := \{u\in X:\, \cJ'(u)|_{\tX}=0\}=\{u\in X:\, \cI'(u)|_{\tX}=0\}
\end{equation}
obviously contains all critical points of $\cJ$. Suppose the following assumptions hold.
\begin{itemize}
	\item[(I1)] $\cI\in\cC^1(X,\R)$ and $\cI(u)\ge \cI(0)=0$ for any $u\in X$.
	\item[(I2)] $\cI$ is $\cT$-sequentially lower semicontinuous:
	$u_n\cTto u\quad\Longrightarrow\quad \liminf \cI(u_n)\ge \cI(u)$.
	\item[(I3)] If $u_n\cTto u$ and $\cI(u_n)\to \cI(u)$ then $u_n\to u$.
	\item[(I4)] $\|u^+\|+\cI(u)\to\infty$ as $\|u\|\to\infty$.
	\item[(I5)] If $u\in\cM$ then $\cI(u)<\cI(u+v)$ for every $v\in \tX\setminus\{0\}$.
\end{itemize}
Clearly, if a strictly convex functional $\cI$ satisfies (I4), then (I2) and (I5) hold. Observe that for any $u\in X^+$ we find  $m(u)\in\cM$ which is the unique global maximizer of $\J|_{u+\tX}$. Note that $m$ needs not be $\cC^1$, and $\cM$ needs not be a differentiable manifold because $\cI'$ is only required to be continuous. Recall from \cite{BartschMederskiJFA} that $(u_n)$ is called a $(PS)_c$-sequence for $\cJ$ if $\cJ'(u_n)\to 0$ and $\cJ(u_n)\to c$, and $\cJ$ satisfies the $(PS)_c^\cT$-condition on $\cM$ if each $(PS)_c$-sequence $(u_n)\subset\cM$ has a subsequence converging in the $\cT$-topology. In order to apply classical critical point theory like the mountain pass theorem to $\cJ\circ m: X^+\to \R$ we need some additional assumptions.
\begin{itemize}
	\item[(I6)] There exists $r>0$ such that $a:=\inf\limits_{u\in X^+,\|u\|=r} \cJ(u)>0$.
	\item[(I7)] $\cI(t_nu_n)/t_n^2\to\infty$ if $t_n\to\infty$ and $u_n^+\to u^+\ne 0$ as $n\to\infty$.
\end{itemize}

According to \cite[Theorem 4.4]{BartschMederskiJFA}, if (I1)--(I7) hold and 
\[
c_\cM := \inf_{\ga\in\Ga}\sup_{t\in [0,1]} \cJ(\ga(t)),
\]
where
\[
\Ga := \{\ga\in\cC([0,1],\cM):\ga(0)=0,\ \|\ga(1)^+\|>r \text{ and } \cJ(\ga(1))<0\},
\]
then  $c_{\cM}\ge a>0$ and $\cJ$ has a $(PS)_{c_\cM}$-sequence $(u_n)$ in $\cM$. If, in addition,
$\cJ$ satisfies the $(PS)_{c_\cM}^\cT$-condition in $\cM$,  then $c_\cM$ is achieved by a critical point of $\cJ$. Since we look for solutions to \eqref{eq} in $\R^3$ and not in a bounded domain as in \cite{BartschMederskiJFA}, the $(PS)_{c_\cM}^\cT$-condition is no longer satisfied. We consider the set
\begin{equation}\label{eq:NehariDef}
\cN := \{u\in X\setminus\tX: \cJ'(u)|_{\R u\oplus \tX}=0\} = \{u\in\cM\setminus\tX: \cJ'(u)[u]=0\} \subset\cM
\end{equation}
and we require the following condition on $\cI$:
\begin{itemize}
	\item[(I8)] $\frac{t^2-1}{2}\cI'(u)[u]+\cI(u) - \cI(tu+v)=\frac{t^2-1}{2}\cI'(u)[u] + t\cI'(u)[v] + \cI(u) - \cI(tu+v) \leq 0$\\ for every $u\in \cN$, $t\ge 0$, $v\in \tX$.
\end{itemize}
In \cite{BartschMederski,BartschMederskiJFA} it was additionally assumed that \emph{strict inequality holds} provided $u\neq tu+v$. This stronger variant of (I8)
implies that for any $u^+\in X^+\setminus\{0\}$ the functional $\cJ$ has a unique critical point $n(u^+)$ on the half-space $\R^+u^+ +\tX$. Moreover, $n(u^+)$ is the global maximizer of $\cJ$ on this half-space, the map
$$n:SX^+=\{u^+\in X^+: \|u^+\|=1\} \to \cN$$
is a homeomorphism, the set $\cN$ is a topological manifold, and it is enough to look for critical points of $\cJ\circ n$. $\cN$ is called the Nehari-Pankov manifold. This is the approach of \cite{SzulkinWethHandbook}. However, if the weaker condition (I8) holds, this procedure cannot be repeated. In particular, $\cN$ need not be a manifold. Yet the following holds.

\begin{Lem} \label{max}
If $u\in\cN$, then $u$ is a (not necessarily unique) maximizer of $\cJ$ on  $\R^+u +\tX$.
\end{Lem}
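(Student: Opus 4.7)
The plan is a direct computation that reduces the assertion to condition (I8) by exploiting the two defining equations of $\cN$. Given $u \in \cN$, I have simultaneously $\cJ'(u)[u]=0$ and $\cJ'(u)[v]=0$ for every $v \in \tX$. Since $v \in \tX$ implies $v^+=0$, the inner-product term in $\cJ'(u)[v]$ vanishes, so the second equation collapses to $\cI'(u)[v]=0$. Likewise, the first equation reads $\|u^+\|^2 = \cI'(u)[u]$.

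Next I would fix $t \ge 0$ and $v \in \tX$ and compute $\cJ(tu+v)$ by splitting it along the decomposition $X = X^+ \oplus \tX$. Since $(tu+v)^+ = tu^+$, the quadratic part contributes exactly $\tfrac{t^2}{2}\|u^+\|^2$, and hence
\[
\cJ(tu+v) - \cJ(u) = \frac{t^2-1}{2}\|u^+\|^2 + \cI(u) - \cI(tu+v).
\]
Substituting $\|u^+\|^2 = \cI'(u)[u]$ from the first relation and inserting the harmless term $t\cI'(u)[v] = 0$ from the second relation gives
\[
\cJ(tu+v) - \cJ(u) = \frac{t^2-1}{2}\cI'(u)[u] + t\,\cI'(u)[v] + \cI(u) - \cI(tu+v),
\]
which is exactly the left-hand side appearing in hypothesis (I8). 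By (I8) this quantity is $\le 0$, so $\cJ(tu+v) \le \cJ(u)$, proving the lemma.

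There is no real obstacle here: the argument is an accounting exercise once one notices that both $\cJ'(u)[u]=0$ and $\cJ'(u)[v]=0$ are available from $u \in \cN$, and that the second identity supplies precisely the $t\,\cI'(u)[v]$ term needed to match the form in (I8). The only subtlety worth flagging is the mild asymmetry in (I8): the two expressions on its left-hand side are written as equal, which is consistent only because $\cI'(u)[v]=0$ for $v \in \tX$ whenever $u \in \cN$, a point that must be stated explicitly at the start of the proof. The words ``not necessarily unique'' in the statement are explained by the fact that (I8) is an inequality rather than a strict inequality, so nothing more needs to be said about uniqueness.
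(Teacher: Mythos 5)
Your proof is correct and follows essentially the same route as the paper: the paper writes $\cJ(tu+v)=\cJ(tu+v)-\cJ'(u)\bigl[\frac{t^2-1}{2}u+tv\bigr]\le\cJ(u)$, using that $\cJ'(u)$ vanishes on $\R u\oplus\tX$ for $u\in\cN$, which after expansion is exactly your computation reducing the claim to (I8). Your explicit accounting of the two identities $\|u^+\|^2=\cI'(u)[u]$ and $\cI'(u)[v]=0$ is just a more detailed rendering of the same argument.
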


\begin{proof}
Let $u\in\cN$. In view of (I8) we get by explicit computation
	\begin{equation*}\label{eq:JonN}
	\cJ(tu+v)=\cJ(tu+v)-\cJ'(u)\Big[\frac{t^2-1}{2}u+tv\Big]\leq \cJ(u)
	\end{equation*}
	for any $t\geq 0$ and $v\in\tX$.
Hence  the conclusion.
\end{proof}

Let 
\[
\tJ := \cJ\circ m: X^+\to\R. 
\]
Before proving the main results of this section we recall the following properties (i)--(iv) taken from \cite[Proof of Theorem~4.4]{BartschMederskiJFA}. Note that (I8) has not been used there.
	\begin{itemize}
		\item[(i)] For each $u^+\in X^+$ there exists a unique $\tu\in \tX$ such that $m(u^+):=u^++\tu\in\cM$. This $m(u^+)$ is the minimizer of $\cI$ on $u^++\tX$.
		\item[(ii)] $m:X^+\to \cM$  is a homeomorphism with the inverse $\cM\ni u\mapsto u^+\in X^+$.
		\item[(iii)] $\wt{\cJ}=\cJ\circ m \in\cC^1(X^+,\R)$.
		\item[(iv)]$\wt{\cJ}'(u^+) = \cJ'(m(u^+))|_{X^+}:X^+\to\R$ for every $u^+\in X^+$.
	\end{itemize}
Property (i) has in fact already been discussed above. We shall also need the following fact. 

\begin{Lem} \label{infty}
	Let $X_k$ be a $k$-dimensional subspace of $X^+$. Then $\wt{\cJ}(u)\to-\infty$ whenever $\|u\|\to\infty$ and $u\in X_k$.
\end{Lem}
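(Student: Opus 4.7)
My plan is to argue by contradiction. Suppose $\widetilde{\cJ}(u_n) \ge -C$ for some constant $C$ along a sequence $(u_n) \subset X_k$ with $\|u_n\| \to \infty$. Write $t_n := \|u_n\|$ and $v_n := u_n/t_n$, so $\|v_n\| = 1$. Since $X_k$ is finite-dimensional, after passing to a subsequence we have $v_n \to v$ in $X_k$ with $\|v\| = 1$; in particular $v \in X^+$ and $v \ne 0$.

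The next step is to translate the lower bound on $\widetilde{\cJ}(u_n)$ into an upper bound on $\cI(m(u_n))$. From property (i) of $m$ and the structure \eqref{EqJ}, one has
\[
\widetilde{\cJ}(u_n) \;=\; \cJ(m(u_n)) \;=\; \tfrac{1}{2}\|u_n\|^2 - \cI(m(u_n)),
\]
so the assumption $\widetilde{\cJ}(u_n) \ge -C$ rearranges to
\[
\frac{\cI(m(u_n))}{t_n^2} \;\le\; \frac{1}{2} + \frac{C}{t_n^2} \;\longrightarrow\; \frac{1}{2}.
\]

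Now I would apply (I7) to the auxiliary sequence $z_n := m(u_n)/t_n \in X$. Since $z_n^+ = u_n^+/t_n = v_n \to v \ne 0$ and $t_n \to \infty$, (I7) yields
\[
\frac{\cI(m(u_n))}{t_n^2} \;=\; \frac{\cI(t_n z_n)}{t_n^2} \;\longrightarrow\; \infty,
\]
which contradicts the upper bound above. Hence $\widetilde{\cJ}(u) \to -\infty$ as $\|u\| \to \infty$ with $u \in X_k$.

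The one point that needs care is the legitimacy of invoking (I7) with $z_n$ in place of $u_n$: this is fine because (I7) is stated for arbitrary sequences in $X$ whose $X^+$-projections converge to a nonzero limit, and both conditions are met by the construction. Finite-dimensionality of $X_k$ is essential, as it supplies the strong convergence $v_n \to v$ with $v \ne 0$; in an infinite-dimensional subspace the argument would fail since we would only have weak convergence of the unit vectors, possibly to $0$.
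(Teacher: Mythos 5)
Your proof is correct and follows essentially the same route as the paper: both normalize $u_n = t_n v_n$, use finite-dimensionality of $X_k$ to extract $v_n \to v \ne 0$, and apply (I7) to the sequence $m(u_n)/t_n$ whose $X^+$-component is $v_n$. The only cosmetic difference is that you phrase it as a contradiction while the paper directly computes $\wt{\cJ}(t_nv_n)/t_n^2 \to -\infty$.
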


\begin{proof}
	It suffices to show that each sequence $(u_n^+)\subset X_k$ such that $\|u_n^+\|\to\infty$ contains a subsequence along which $\wt{\cJ}\to-\infty$.  Let $u_n^+ = t_nv_n$,  $\|v_n\|=1$ and $m(u_n^+)=u_n^++\tu_n\in\cM$. Then, passing to a subsequence and using (I7), we obtain
	\[
	\frac{\wt{\cJ}(t_nv_n)}{t_n^2} = \frac12 - \frac{\cI(t_n(v_n+\frac1{t_n}\tu_n))}{t_n^2} \to - \infty
	\]
	as claimed.
\end{proof}

As usual, $(u_n)\subset X^+$ will be called \emph{a Cerami sequence} for $\tJ$ at the level $c$ if $(1+\|u_n\|)\tJ'(u_n) \to 0$ and $\tJ(u_n) \to c$. In view of (I4), it is clear that if $(u_n)$ is a bounded Cerami sequence for $\tJ$, then $(m(u_n))\subset \cM$ is a bounded Cerami sequence for $\cJ$.

\begin{Th}\label{ThLink1}
	Suppose $\cJ \in \cC^1(X,\R)$ satisfies (I1)--(I8). Then:
	\begin{itemize}
		\item[(a)] $c_{\cM}\ge a>0$ and $\tJ$ has a Cerami sequence $(u_n)$ at the level $c_\cM$. 
		\item[(b)] $c_{\cM}=c_{\cN}:= \inf_\cN \cJ$.
	\end{itemize}
\end{Th}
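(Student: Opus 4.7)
My plan is to push the analysis from $\cM$ onto the Hilbert space $X^+$ via the homeomorphism $m\colon X^+\to\cM$, apply the Cerami version of the classical mountain pass theorem to $\tJ:=\cJ\circ m\in\cC^1(X^+,\R)$, and then identify the minimax level with $c_\cN$. First, the map $\gamma\mapsto m^{-1}\circ\gamma$ is a bijection between $\Gamma$ and
\[
\tilde\Gamma:=\{\tilde\gamma\in\cC([0,1],X^+):\tilde\gamma(0)=0,\ \|\tilde\gamma(1)\|>r,\ \tJ(\tilde\gamma(1))<0\}
\]
under which $\sup_t\cJ(\gamma(t))=\sup_t\tJ(\tilde\gamma(t))$, so $c_\cM=\tilde c:=\inf_{\tilde\gamma\in\tilde\Gamma}\sup_t\tJ(\tilde\gamma(t))$. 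Because $m(u^+)$ minimizes $\cI$ on $u^++\tX$, we have $\tJ(u^+)\ge\cJ(u^+)\ge a$ on $\{\|u^+\|=r\}$ by (I6); also $\tJ(0)=0$ and $\tJ\to-\infty$ on any ray from the origin by Lemma~\ref{infty}. Thus $\tJ$ has the classical mountain pass geometry on $X^+$, and the Cerami mountain pass theorem yields $\tilde c\ge a>0$ together with a sequence $(u_n)\subset X^+$ satisfying $(1+\|u_n\|)\tJ'(u_n)\to 0$ and $\tJ(u_n)\to\tilde c=c_\cM$, proving~(a).

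For the easy direction $c_\cM\le c_\cN$ of~(b), given $u\in\cN$ I would choose $T>0$ via Lemma~\ref{infty} so that $\tJ(Tu^+)<0$ and $\|Tu^+\|>r$, and set $\gamma_u(t):=m(tTu^+)$ for $t\in[0,1]$. Then $\gamma_u\in\Gamma$, and since $m(sTu^+)\in sTu^++\tX\subset\R^+u+\tX$, Lemma~\ref{max} gives $\cJ(m(sTu^+))\le\cJ(u)$, whence $\sup_t\cJ(\gamma_u(t))\le\cJ(u)$. Taking the infimum over $u\in\cN$ yields $c_\cM\le c_\cN$.

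For the reverse inequality, I first observe that for every $u^+\in X^+\setminus\{0\}$ the function $\phi(s):=\tJ(su^+)$ satisfies $\phi(0)=\phi'(0)=0$ (as $\cI'(0)=0$ by (I1)), is positive for small $s>0$, and tends to $-\infty$ by Lemma~\ref{infty}; it therefore admits a positive local maximum at some $s^*=s^*(u^+)>0$ with $\phi'(s^*)=0$, so $m(s^*u^+)\in\cN$ and $\max_{s\ge 0}\tJ(su^+)\ge c_\cN$. The task reduces to showing that every $\tilde\gamma\in\tilde\Gamma$ meets the Nehari barrier $\tilde\cN:=m^{-1}(\cN)=\{u^+\neq 0:\tJ'(u^+)[u^+]=0\}$ at some $t_*$, for then $\tJ(\tilde\gamma(t_*))=\cJ(m(\tilde\gamma(t_*)))\ge c_\cN$ and hence $\sup_t\tJ(\tilde\gamma(t))\ge c_\cN$. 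Since $\tJ(\tilde\gamma(1))<0$, the first Nehari scale on the ray $\R^+\tilde\gamma(1)$ sits at some $s^*<1$, so $\tilde\gamma(1)$ lies beyond the first Nehari point on its own ray. I would then apply an intermediate value argument to $h(t):=\tJ'(\tilde\gamma(t))[\tilde\gamma(t)]$, which is continuous, vanishes at $t=0$, is positive for $t$ near $0$ (using $\cI'(0)=0$), and—after extending $\tilde\gamma$ along the ray $\R^+\tilde\gamma(1)$ to a point where $h<0$, guaranteed by (I7) and Lemma~\ref{infty} together with the mean value theorem applied to $\phi$—changes sign along the extended path, producing the desired $t_*$ with $\tilde\gamma(t_*)\in\tilde\cN$.

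The delicate step is the last one. Under the stronger variant of (I8) used in \cite{SzulkinWeth,Mederski2015}, $\phi$ has a unique positive critical point on each ray and $\tilde\cN$ is homeomorphic to the unit sphere $SX^+$, so the intersection of $\tilde\gamma$ with $\tilde\cN$ is essentially automatic. Under the weaker (I8) adopted here $\phi$ may possess several critical points on a single ray and $\tilde\cN$ need not be a topological manifold; accordingly, the crossing has to be established through the careful intermediate value argument on $h$ together with the path extension controlled by (I7), which I regard as the technical heart of the theorem.
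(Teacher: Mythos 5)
Your proof follows the same route as the paper: part (a) and the inequality $c_\cM\le c_\cN$ are exactly the paper's argument (transfer to $X^+$ via $m$, Cerami mountain pass theorem, and Lemma~\ref{max} to bound $\sup_t\cJ(m(tTu^+))$ by $\cJ(u)$). For the reverse inequality the paper also shows that every path in $\wt\Gamma$ must cross $\cN_0=m^{-1}(\cN)$; it does so by first using Lemma~\ref{max} to establish the ray structure: on each ray $\R^+u^+$ the Nehari times form an interval $[t_{\min},t_{\max}]$ consisting precisely of the global maximizers of $\phi(s)=\tJ(su^+)$, whence $\tJ'(su^+)[u^+]>0$ for $s<t_{\min}$ and $<0$ for $s>t_{\max}$, so $X^+\setminus\cN_0$ has two components. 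Your intermediate value argument on $h(t)=\tJ'(\wt\gamma(t))[\wt\gamma(t)]$ is the same idea in different clothing, but two steps need repair.

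First, the positivity of $h$ near $t=0$ does not follow from $\cI'(0)=0$ alone: that only gives $\cI'(m(v))[v]=o(\|v\|)$, whereas you need $\cI'(m(v))[v]<\|v\|^2$. The correct route is the ray structure above (note $\tJ(su^+)\le\frac12 s^2\|u^+\|^2$ forces $t_{\min}\|u^+\|\ge\sqrt{2a}$, so every small nonzero $v$ lies below the Nehari interval of its ray and $\tJ'(v)[v]>0$ there). Second, and more seriously, your extension of $\wt\gamma$ along the ray $\R^+\wt\gamma(1)$ is where the argument as written breaks: the IVT zero of $h$ could a priori lie on the extension, i.e.\ at $s\wt\gamma(1)$ with $s>1$, and then $\sup_{t\in[0,1]}\tJ(\wt\gamma(t))\ge c_\cN$ does not follow. "Beyond the \emph{first} Nehari scale" is not enough, since under the weak (I8) there may be several; what saves you is that all Nehari points on a ray are global maximizers with value $\ge a>0$ and $\phi\ge0$ on $[0,t_{\max}]$, so $\tJ(\wt\gamma(1))<0$ forces $1>t_{\max}$ and hence $h(1)=\phi'(1)<0$ already. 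Thus no extension is needed and the crossing lies in $(0,1)$. With these two points supplied (both consequences of Lemma~\ref{max}, i.e.\ of (I8)), your proof is complete and coincides in substance with the paper's.
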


The set $\cN_0 := \{u\in X^+\setminus\{0\}: \tJ'(u)[u]=0\}$ is called \emph{the Nehari manifold} for $\tJ$. Denote $c_{\cN_0} := \inf_{\cN_0}\cJ$.

\begin{altproof}{Theorem \ref{ThLink1}}
	Set
\begin{equation}\label{eq:Gamma_Phi}
	\wt{\Gamma} := \{\sigma\in\cC([0,1],X^+):\, \sigma(0)=0,\ \|\sigma(1)\|>r \hbox{ and }
	\tJ(\sigma(1))<0\}.
\end{equation}
	Observe that $\tJ$ has the mountain pass geometry and $\Gamma,\wt\Gamma$ are related as follows: if $\gamma\in\Gamma$, then $\gamma^+\in\wt\Gamma$ and $\cJ(\gamma(t))=\tJ(\gamma^+(t))$, and if $\sigma\in\wt\Gamma$, then $m\circ\sigma\in\Gamma$ and $\tJ(\sigma(t)) = \cJ(m\circ\sigma(t))$. Hence the mountain pass value for $\tJ$ is given by
	\begin{equation} \label{cm}
	c_{\cM}=\inf_{\sigma\in\wt{\Gamma}}\sup_{t\in [0,1]}\cJ\circ m(\sigma(t)) \equiv \wt{\cJ}(\sigma(t)) \ge a > 0.
	\end{equation} 
By the mountain pass theorem there exists a Cerami sequence $(u_n)$ for $\tJ$ at the level $c_\cM$ (see \cite{Cerami,bbf}) which proves (a). \\
\indent 			
The map $u\mapsto m(u)$ is a homeomorphism between $\cN_0$ and $\cN$, and since $\tJ(u)=\cJ(m(u))$, $c_{\cN_0}=c_{\cN}$. For $u\in X^+\setminus\{0\}$, consider $\wt{\cJ}(tu)$, $t>0$. By Lemma \ref{infty}, $\wt{\cJ}(tu)\to-\infty$ as $t\to\infty$. Hence $\max_{t > 0}\wt{\cJ}(tu)\geq a$ exists. If $t_1u,t_2u\in \cN_0$, then $m(t_1u),m(t_2u)\in\cN$, so by Lemma \ref{max}, $\wt{\cJ}(t_1u) =\wt{\cJ}(t_2u)$. Consequently, there exist $0 < t_{min} \le t_{max}$ such that $\wt{\cJ}(tu)\in\cN_0$ if and only if $t\in[t_{min},t_{max}]$ and $\wt{\cJ}(tu)$ has the same value for those $t$. Hence $\wt{\cJ}'(tu)[u]>0$ for $0<t<t_{min}$ and $\wt{\cJ}'(tu)[u]<0$ for $t>t_{max}$.  It follows that $X^+\setminus \cN_0$ consists of two connected components and therefore each path in $\wt{\Gamma}$ must intersect $\cN_0$. Therefore $c_{\cM}\ge c_{\cN_0}$. Since $c_{\cN_0} = \inf_{u\in X^+\setminus\{0\}}\max_{t>0}\tJ(tu)$, \eqref{cm} implies $c_{\cM} = c_{\cN_0} = c_{\cN}$. Note in particular that $\wt{\cJ}\ge 0$ on $B(0,r)$, where $r$ is given in (I6), so the condition $\|\sigma(1)\|>r$ in the definition of $\wt\Gamma$ is redundant because it must necessarily hold if $\tJ(\sigma(1))<0$.
\end{altproof}

Since $c_{\cN_0}=c_{\cN}=c_{\M}>0$, $\cN_0$ is bounded away from 0 and hence closed in $X^+$ while $\cN$ is bounded away from $\tX$ and hence closed in $X$.

For a topological group acting on $X$, denote \emph{the orbit of $u\in X$} by $G\ast u$, i.e., 
$$G\ast u:=\{gu: g\in G\}.$$
A set $A\subset X$ is called \emph{$G$-invariant} if $gA\subset A$ for all $g\in G$. $\cJ: X\to\R$ is called \emph{$G$-invariant} and $T: X\to X^*$ \emph{$G$-equivariant} if $\cJ(gu)=\cJ(u)$ and $T(gu)=gT(u)$ for all $g\in G$, $u\in X$. \\
\indent   
In order to deal with  multiplicity of critical points, assume that $G$ is a topological group such that
\begin{itemize}
	\item[(G)] $G$ acts on $X$ by isometries and discretely in the sense that for each $u\ne 0$, $(G*u)\setminus\{u\}$ is bounded away from $u$. Moreover, $\cJ$ is $G$-invariant and $X^+,\wt X$ are $G$-invariant.
\end{itemize}
Observe that $\cM$ is $G$-invariant and $m:X^+\to\cM$ is $G$-equivariant.  In our application to \eqref{eq} we have $G=\mathbb{Z}^3$ acting by translations, see Theorem \ref{ThMain}.

\begin{Lem} \label{discrete}
For all $u,v\in X$ there exists $\eps=\eps_{u,v}>0$ such that $\|gu-hv\|>\eps$ unless $gu=hv$ ($g,h\in G$).
\end{Lem}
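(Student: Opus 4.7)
The plan is to argue by contradiction and reduce the statement about two orbits $G{\ast}u$ and $G{\ast}v$ to the single-orbit discreteness statement provided by (G).

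First I will dispose of the trivial situations $u=0$ or $v=0$: because $G$ acts by isometries and the invariance of $X^+$, $\tX$ forces $g\cdot 0=0$, one has $\|gu-hv\|=\|u\|$ when $v=0$, and $\|gu-hv\|=\|v\|$ when $u=0$. A positive $\eps$ therefore exists as soon as the surviving vector is nonzero, and the claim is vacuous otherwise. Hence I may assume $u,v\neq 0$.

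Next, suppose for contradiction that there exist sequences $(g_n),(h_n)\subset G$ with $g_n u\neq h_n v$ for every $n$ but $\|g_n u - h_n v\|\to 0$. Set $k_n := g_n^{-1}h_n\in G$; applying the isometry $g_n^{-1}$ to both arguments gives
\[
\|g_n u - h_n v\| \;=\; \|u - k_n v\| \;\longrightarrow\; 0,
\]
so $k_n v \to u$, while $k_n v\neq u$ for every $n$. In particular $(k_n v)$ is Cauchy. I then invoke the discreteness hypothesis in (G), applied to the nonzero element $v$: there is $\delta>0$ such that $\|gv-v\|>\delta$ for every $g\in G$ with $gv\neq v$. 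For $n,m$ large enough $\|k_n v - k_m v\|<\delta$, and by isometry this equals $\|v - k_n^{-1}k_m v\|$, forcing $k_n^{-1}k_m v = v$, i.e.\ $k_m v = k_n v$ for all sufficiently large $n,m$. Hence $(k_n v)$ is eventually constant, its constant value must coincide with its limit $u$, contradicting $k_n v\neq u$.

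The only conceptual step is the reduction in the previous paragraph; after that, everything is handled by isometry plus the single-orbit discreteness in (G). I do not foresee any serious obstacle—the argument is short, and the one genuine observation is that the two-orbit distance question collapses to a one-orbit question as soon as one translates by $g_n^{-1}$.
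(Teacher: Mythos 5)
Your proof is correct and follows essentially the same route as the paper's: reduce $\|gu-hv\|$ to $\|u-kv\|$ by isometry and then invoke the single-orbit discreteness in (G). In fact, your sequential contradiction argument supplies precisely the justification the paper leaves implicit when it asserts that one may take $v$ to minimize the distance from $u$ to $G\ast v$ and then set $\eps=\frac12\|u-v\|$.
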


\begin{proof}
Suppose $G*u\ne G*v$ (the other case is obvious). We may assume without loss of generality that $u\ne 0$ and $v$ minimizes the distance from $u$ to $G*v$.  Now it suffices to take $\eps := \frac12\|u-v\|$.
\end{proof}

We shall use the notation 
\begin{gather*}
\tJ^\beta := \{u\in X^+: \tJ(u)\le\beta\}, \quad  \tJ_\alpha := \{u\in X^+:  \tJ(u)\ge\alpha\}, \\
 \tJ_\alpha^\beta := \tJ_\alpha\cap\tJ^\beta, \quad \cK:=\big\{u\in X^+: \tJ'(u)=0\big\}.
\end{gather*}
Since all nontrivial critical points of $\cJ$ are in $\cN$, it follows from Theorem \ref{ThLink1} that $\tJ(u)\ge a$ for all $u\in\cK\setminus\{0\}$. \\
\indent 
We introduce the following variant of the {\em Cerami condition} between the levels $\alpha, \beta\in\R$.
\begin{itemize}
	\item[$(M)_\alpha^\beta$]
	\begin{itemize}
		\item[(a)] Let $\alpha\le\beta$. There exists $M_\alpha^\beta$ such that $\limsup_{n\to\infty}\|u_n\|\le M_\alpha^{\beta}$ for every $(u_n)\subset X^+$ satisfying $\alpha\le\liminf_{n\to\infty}\tJ(u_n)\le\limsup_{n\to\infty}\tJ(u_n)\leq\beta$ and \linebreak $(1+\|u_n\|)\tJ'(u_n)\to 0$.
		\item[(b)] Suppose in addition that the number of critical orbits in $\tJ_\alpha^\beta$ is finite. Then there exists $m_\alpha^\beta>0$ such that if $(u_n), (v_n)$ are two sequences as above and $\|u_n-v_n\|<m_\alpha^\beta$ for all $n$ large, then  $\liminf_{n\to\infty}\|u_n-v_n\|=0$.
	\end{itemize}
\end{itemize}

Note that if $\cJ$ is even, then $m$ is odd (hence $\tJ$ is even) and $\cM$ is symmetric,  i.e. $\cM=-\cM$. Note also that $(M)_\alpha^\beta$ is a condition on $\tJ$ and \emph{not} on $\cJ$. Our main multiplicity result reads as follows.
\begin{Th}\label{Th:CrticMulti}
		Suppose $\cJ \in \cC^1(X,\R)$ satisfies (I1)--(I8) and $\dim(X^+)=\infty$.\\
		(a) If  $(M)_0^{c_\cM+\eps}$ holds for some $\eps>0$, then either $c_{\cM}$ is attained by a critical point or there exists a sequence of critical values $c_n$ such that $c_n>c_{\cM}$ and $c_n\to c_{\cM}$ as $n\to\infty$.\\		
		 (b) If $(M)_0^{\beta}$ holds for every $\beta>0$ and $\cJ$ is even, 
	then $\cJ$ has infinitely many distinct critical orbits.
\end{Th}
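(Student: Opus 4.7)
My plan is to deduce both parts from suitable equivariant deformation lemmas applied to the reduced functional $\tJ=\cJ\circ m$ on $X^+$, using condition $(M)$ to prevent Cerami sequences from escaping a neighbourhood of the critical set. For part (a), I would start with the Cerami sequence $(u_n)\subset X^+$ at level $c_\cM$ provided by Theorem \ref{ThLink1}(a), which is automatically bounded by $(M)_0^{c_\cM+\eps}$(a). Arguing by contradiction, suppose $c_\cM$ is not attained and no critical values lie in $(c_\cM,c_\cM+\eps']$ for some $\eps'\in(0,\eps]$; then $\cK\cap\tJ_{c_\cM}^{c_\cM+\eps'}=\emptyset$. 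Applying $(M)_0^{c_\cM+\eps'}$(b) with \emph{zero} critical orbits in the slab $\tJ_{c_\cM-\eps'/2}^{c_\cM+\eps'/2}$ should yield
\[
\inf\bigl\{(1+\|u\|)\,\|\tJ'(u)\|:u\in\tJ_{c_\cM-\eps'/2}^{c_\cM+\eps'/2}\bigr\}>0,
\]
because otherwise a minimizing sequence would be a Cerami sequence violating $(M)$(b). A standard Cerami pseudo-gradient flow in $X^+$ then produces a continuous deformation $\eta:X^+\to X^+$ with $\eta(\tJ^{c_\cM+\eps'/4})\subset\tJ^{c_\cM-\eps'/4}$, and composing $\eta$ with a path in $\wt\Ga$ (see \eqref{eq:Gamma_Phi}) attaining $c_\cM$ within $\eps'/4$ contradicts the minimax characterization \eqref{cm}.

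For part (b), evenness of $\cJ$ combined with the uniqueness in property (i) above makes $\tJ$ even on $X^+$. I would then introduce the symmetric minimax levels
\[
c_k:=\inf_{A\in\Sigma_k}\sup_{u\in A}\tJ(u), \quad k\ge 1,
\]
where $\Sigma_k$ is the family of compact symmetric subsets $A\subset X^+\setminus\{0\}$ of Krasnoselskii genus at least $k$ that link the mountain-pass sphere $\{\|u\|=r\}$ from (I6), so $c_k\ge a>0$. Since $\dim X^+=\infty$ and $\tJ\to-\infty$ on finite-dimensional subspaces by Lemma \ref{infty}, every $\Sigma_k$ is non-empty and $c_k<\infty$. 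If there were only finitely many critical $G$-orbits, then every slab $\tJ_\alpha^\beta$ would contain finitely many of them, so the pseudo-gradient construction of part (a), now carried out $(G\times\Z_2)$-equivariantly, would yield equivariant deformations decreasing $\tJ$ away from arbitrarily small neighbourhoods of $\cK$. A Lusternik-Schnirelmann argument then forces $c_k=c_{k+1}=\cdots=c_{k+\ell}$ for arbitrarily large $\ell$, while under finitely many critical orbits the $\Z_2$-genus of the corresponding critical set, computed on the quotient by $G$, stays bounded, giving the required contradiction.

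The main obstacle will be the construction of a genuinely equivariant deformation, since $G$ acts only discretely and not cocompactly; the resulting non-compactness is precisely what prevents the usual Palais-Smale condition from holding. The decisive tool is part (b) of $(M)$, which encodes a local discreteness of Cerami sequences: any two of them staying within distance less than $m_\alpha^\beta$ must approach each other. This allows one to cover a small neighbourhood of $\cK\cap\tJ_\alpha^\beta$ by finitely many $G$-translates of balls of radius $m_\alpha^\beta/3$, on which an equivariant partition of unity can be built, while outside such a neighbourhood the uniform lower bound on $(1+\|u\|)\|\tJ'(u)\|$ derived in the first paragraph supplies a well-defined equivariant pseudo-gradient vector field; the associated flow then closes both parts.
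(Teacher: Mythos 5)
The central step of your part (a) --- and the one part (b) leans on --- is the claim that if the slab $\tJ_{c_\cM-\eps'/2}^{c_\cM+\eps'/2}$ contains no critical points, then $(M)_0^{c_\cM+\eps}$(b) forces $\inf\{(1+\|u\|)\,\|\tJ'(u)\|: u\in \tJ_{c_\cM-\eps'/2}^{c_\cM+\eps'/2}\}>0$. This is false: Theorem \ref{ThLink1}(a) produces a Cerami sequence $(u_n)$ for $\tJ$ at the level $c_\cM$ unconditionally, and for large $n$ this sequence lies in that slab, so the infimum is $0$ whether or not $c_\cM$ is attained. A single Cerami sequence never violates $(M)$(b), which constrains only \emph{pairs} of Cerami sequences that remain at distance less than $m_\alpha^\beta$ without approaching each other; it is not a compactness condition and does not exclude Cerami sequences from critical-point-free regions. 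Consequently the uniform finite-time deformation $\eta(\tJ^{c_\cM+\eps'/4})\subset\tJ^{c_\cM-\eps'/4}$ you invoke cannot be built, and the contradiction with \eqref{cm} does not materialize.

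What is actually needed is a pointwise, non-uniform deformation: one first shows (Lemma \ref{lem:flow}) that for each individual $u$ the pseudo-gradient flow either converges to a critical point or drives $\tJ(\eta(t,u))\to-\infty$. Condition $(M)$(b) enters precisely there, by extracting from a non-convergent bounded trajectory \emph{two} Cerami sequences $\tu_n,\tv_n$ with $\eps/3\le\liminf_{n\to\infty}\|\tu_n-\tv_n\|$ and $\limsup_{n\to\infty}\|\tu_n-\tv_n\|<m_0^\beta$. One then deforms a near-optimal path in $\wt\Gamma$ using a continuous entrance-time map $e(u)$, finite for each $u$ but not uniformly bounded, rather than a fixed-time deformation. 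The same issue undermines your part (b): the key inequality \eqref{eq:entrancetime1}, which replaces your uniform gradient bound, is obtained by combining $(M)$(b) with $G$-translations of points on flow lines near $\cK^\beta$, and the flow used is only odd, not $G$-equivariant --- no equivariant partition of unity over $G$-translates is needed nor, as you describe it, available, since a neighbourhood of a $G$-orbit is not covered by finitely many translated balls. Your pseudoindex/genus framework for (b) is in the right spirit (the paper uses a Benci-type pseudoindex $i^*$ relative to $S(0,r)$ and shows $\gamma(\cK^\beta)=1$ by discreteness of the orbits), but without a correct deformation lemma the Lusternik--Schnirelmann step has no foundation.
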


By a standard argument we can find
	a locally Lipschitz continuous pseudo-gradient vector field $v:X^+\setminus \cK\to X^+$ associated with $\wt{\cJ}$, i.e.
	\begin{eqnarray}
	\|v(u)\|&<&1,\label{eq:flow2}\\
	\wt{\cJ}'(u)[v(u)] &>& \frac12\|\wt{\cJ}'(u)\|\label{eq:flow3}
	\end{eqnarray}
	for any $u\in X^+\setminus \cK$. Moreover, if $\J$ is even, then $v$ is odd.  
	Let
	$\eta:\cG\to  X^+\setminus \cK$ be the flow defined by
	\begin{equation*}
	\left\{
	\begin{aligned}
	&\partial_t \eta(t,u)=-v(\eta(t,u))\\
	&\eta(0,u)=u
	\end{aligned}
	\right.
	\end{equation*}
	where $\cG:=\{(t,u)\in [0,\infty)\times (X^+\setminus \cK):\; t<T(u)\}$ and $T(u)$ is the maximal time of existence of $\eta(\cdot,u)$.  We prove Theorem \ref{Th:CrticMulti} by contradiction. From now on we assume:  
\[
\text{There is a finite number of distinct orbits $\{G\ast u: u\in \cK\}$.}
\]
	
\begin{Lem}\label{lem:flow}
	Suppose $(M)_0^\beta$ holds for some $\beta>0$ and let $u \in \tJ_0^\beta\setminus \cK$. Then either $\lim_{t \to T(u)}\eta(t,u)$ exists and is a critical point of $\wt{\cJ}$ or $\lim_{t\to T(u)}\wt{\cJ}(\eta(t,u)) = -\infty$. In the latter case $T(u)=\infty$.
\end{Lem}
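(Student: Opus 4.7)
The plan is to exploit the strict monotonicity of $\wt{\cJ}$ along the pseudo-gradient flow and analyze the limit $\alpha:=\lim_{t\to T(u)^-}\wt{\cJ}(\eta(t,u))\in[-\infty,\wt{\cJ}(u)]\subset[-\infty,\beta]$, which exists because $\frac{d}{dt}\wt{\cJ}(\eta(\cdot,u))=-\wt{\cJ}'[v](\eta)<0$ by \eqref{eq:flow3}.

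Two auxiliary subcases are disposed of quickly. If $\alpha=-\infty$ and $T(u)<\infty$, the bound $\|v\|<1$ yields $\|\eta(t,u)-u\|\le t<T(u)$, so the orbit is bounded; continuity of $m:X^+\to\cM$ and $\cI$ makes $\wt{\cJ}=\tfrac{1}{2}\|\cdot\|^2-\cI\circ m$ bounded on bounded sets, contradicting $\alpha=-\infty$. Hence $T(u)=\infty$ and the $-\infty$ alternative of the lemma holds. If instead $\alpha\in\R$ and $T(u)<\infty$, the same Lipschitz bound makes $(\eta(t,u))_{t<T(u)}$ Cauchy, producing a limit $u^*\in X^+$; local Lipschitz continuity of $v$ on $X^+\setminus\cK$ combined with the maximality of $T(u)$ forces $u^*\in\cK$.

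The core case is $\alpha\in\R$ with $T(u)=\infty$. Integrating \eqref{eq:flow3} over $[0,\infty)$ gives
\[
\int_0^\infty\|\wt{\cJ}'(\eta(t,u))\|\,dt\le 2(\wt{\cJ}(u)-\alpha)<\infty,
\]
so $\|\wt{\cJ}'(\eta(t_n,u))\|\to 0$ along some $t_n\to\infty$. Since every nontrivial critical point of $\wt{\cJ}$ lies in $\cN$ with value $\ge a>0$ by Theorem \ref{ThLink1}(a), while $\wt{\cJ}(0)=0$, the critical values form a subset of $\{0\}\cup[a,\infty)$; in particular an orbit whose value crosses below level $0$ cannot stall at a finite negative limit and must have $\alpha=-\infty$, so we may assume $\alpha\ge 0$. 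Replacing $v$ by the standard Cerami-weighted pseudo-gradient (incorporating the factor $(1+\|\cdot\|)^{-1}$, a routine modification preserving all previous steps) turns $(\eta(t_n,u))$ into a Cerami sequence with $\wt{\cJ}$-values eventually in $[0,\beta]$. Condition $(M)_0^\beta$(a) then yields $\sup_n\|\eta(t_n,u)\|<\infty$, and $(M)_0^\beta$(b), together with the standing assumption at the start of the proof of Theorem \ref{Th:CrticMulti} that there are finitely many critical orbits, rules out two distinct cluster points of the orbit and forces $\lim_{t\to\infty}\eta(t,u)$ to exist and to lie in $\cK$.

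The principal obstacle is converting the $L^1$-integrability of $\|\wt{\cJ}'(\eta)\|$ along the flow, which is a Palais--Smale-type output, into a Cerami-type control compatible with $(M)_0^\beta$. Installing the weight $(1+\|\cdot\|)^{-1}$ in $v$ achieves this; once done, part (a) of $(M)_0^\beta$ supplies boundedness of the orbit and part (b), combined with the separation of distinct critical orbits guaranteed by (G) and Lemma \ref{discrete}, rules out clustering at several critical points and yields convergence of the whole orbit to a single one.
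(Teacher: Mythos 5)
Your reduction to the three alternatives (finite $T(u)$; infinite $T(u)$ with $\wt{\cJ}(\eta(t,u))$ bounded below; infinite $T(u)$ with $\wt{\cJ}\to-\infty$) and your treatment of the finite-time case are essentially the paper's. But the core case contains two genuine gaps. First, the lemma is a statement about the flow of the \emph{specific} field $v$ with $\|v(u)\|<1$ fixed before the lemma; replacing $v$ by a Cerami-weighted field proves a statement about a different flow, and the change is not ``routine'': the bound $\|\eta(t,u)-\eta(s,u)\|\le t-s$ is used both in your own finite-time argument and later in the proof of Theorem \ref{Th:CrticMulti} (see \eqref{eq:t}), and it fails once $\|v\|$ is only bounded by $1+\|u\|$. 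The paper resolves the Palais--Smale-versus-Cerami obstacle differently: it first establishes boundedness of the relevant sequences (a bounded PS sequence is automatically a Cerami sequence), splitting into the cases where the orbit is bounded, unbounded but with $\|\eta(t,u)\|\not\to\infty$, and $\|\eta(t,u)\|\to\infty$; in the last case it extracts from $(M)_0^\beta$(a) the quantitative bound $\|\wt{\cJ}'(w)\|\ge\delta/\|w\|$ for $\|w\|\ge R_0$ and reaches a contradiction by integrating along the flow.

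Second, and more seriously, even granting a Cerami sequence $\eta(t_n,u)$ bounded by $(M)_0^\beta$(a), the step ``$(M)_0^\beta$(b) \ldots rules out two distinct cluster points \ldots and forces $\lim_{t\to\infty}\eta(t,u)$ to exist'' is an assertion where the actual work lies. Boundedness along a subsequence does not bound the whole orbit (the paper needs a separate argument, producing a Cerami sequence in an annulus $R\le\|\cdot\|\le R+1$ with $R>M_0^\beta$, to exclude that case), and $(M)_0^\beta$(b) only yields $\liminf_n\|u_n-v_n\|=0$ for \emph{two prescribed sequences}, not convergence of the orbit. Turning it into convergence requires the Szulkin--Weth construction: assuming $\|\eta(t_n,u)-\eta(t_{n+1},u)\|=\eps<m_0^\beta/2$ infinitely often, one manufactures two Cerami sequences $\tu_n,\tv_n$ on the orbit with $\eps/3\le\|\tu_n-\tv_n\|\le 5\eps/3<m_0^\beta$, contradicting $(M)_0^\beta$(b). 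Without this construction the proof is incomplete. (A minor further point: your claim that $\wt{\cJ}$ is bounded on bounded sets does not follow from continuity alone in infinite dimensions; for finite $T(u)$ it is simpler to note that the orbit is Cauchy, hence convergent, so $\wt{\cJ}(\eta(t,u))$ has a finite limit by continuity.)
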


\begin{proof}
	Suppose $T(u)<\infty$ and let $0\le s<t<T(u)$. Then
	\[
	\|\eta(t,u)-\eta(s,u)\| \le \int_s^t \|v(\eta(\tau,u))\|\,d\tau \le t-s.
	\] 
	Hence the limit exists and if it is not a critical point, then $\eta(\cdot,u)$ can be continued for $t>T(u)$. \\
	\indent Suppose now $T(u)=\infty$ and $\wt{\cJ}(\eta(t,u))$ is bounded from below. We distinguish three cases: 
	\begin{itemize}
		\item[(i)] $t\mapsto \eta(t,u)$ is bounded,
		\item[(ii)] $t\mapsto \eta(t,u)$ is unbounded but $\|\eta(t,u)\|\not\to\infty$,
		\item[(iii)] $\|\eta(t,u)\|\to\infty$.
	\end{itemize} 
	 (i) We follow an argument in \cite{SzulkinWeth}. We shall show that for each $\eps>0$ there exists $t_\eps>0$ such that
	$\|\eta(t_\eps,u)-\eta(t,u)\|<\eps$ for all $t \ge t_\eps$ (this implies $\lim_{t \to \infty}\eta(t,u)$ exists, and then it is obviously a critical point). Arguing by contradiction, we can find $\eps\in(0,m_0^\beta/2)$, $R>0$ and $t_n\to\infty$ such that $\eta(t_n,u)\in  B(0,R)$ and $\|\eta(t_n,u)-\eta(t_{n+1},u)\| = \eps$ for all $n$.  Let $t_n^1$ be the smallest $t\in(t_n,t_{n+1})$ such that $\|\eta(t_n,u)-\eta(t_n^1,u)\|= \eps/3$ and $t_n^2$ the largest $t\in(t_n^1,t_{n+1})$ such that $\|\eta(t_{n+1},u)-\eta(t_n^2,u)\|= \eps/3$. Put $\kappa_n := \min \{\|\wt{\cJ}'(\eta(t,u))\|: t\in[t_n,t_n^1]\}$. Then
	\begin{eqnarray*}
		\frac{\eps}3 & = & \|\eta(t_n^1,u)-\eta(t_n,u)\| \le \int_{t_{n}}^{t_n^1}  \|v(\eta(t,u))\| \,dt \le t_n^1-t_n \\ 
		& \le & \frac{2}{\kappa_n} \int_{t_n}^{t_n^1} \wt{\cJ}'(\eta(t,u))[v(\eta(t,u))]\,dt = \frac2{\kappa_n}\left(\wt{\cJ}(\eta(t_n,u))-\wt{\cJ}(\eta(t_n^1,u))\right).
	\end{eqnarray*}
	Since $\wt{\cJ}(\eta(t_n,u))-\wt{\cJ}(\eta(t_n^1,u))\to 0$, also $\kappa_n\to 0$. Hence we can choose $s_n^1\in[t_n,t_n^1]$ such that if $\tu_n:= \eta(s_n^1,u)$, then $\wt{\cJ}'(\tu_n)\to 0$. As  $\|\eta(s_n^1,u)\|$ is bounded, $(\tu_n)$ is a Cerami sequence. A similar argument shows the existence of $\tv_n:= \eta(s_n^2,u)$ ($s_n^2\in[t_n^2,t_{n+1}]$) such that $\wt{\cJ}'(\tv_n)\to 0$. Hence
	\[
	\frac\eps 3 \le \liminf_{n\to\infty}\|\tu_n-\tv_n\|\le \limsup_{n\to\infty}\|\tu_n-\tv_n\|\le \eps +\frac{2}{3}\eps<m_0^\beta,
	\]
	a contradiction to $(M)_0^\beta (b)$.\\
\indent			
	(ii) Observe that  there are no Cerami sequences in $X^+\setminus B(0,M_0^\beta)$ at any level $\alpha\in[0, \beta]$ according to $(M)_0^\beta (a)$. Since $\eta(t,u)$ is unbounded but $\|\eta(t,u)\|\not\to\infty$, we can find $R>M_0^\beta$ such that there exist arbitrarily large $t$ for which $\eta(t,u)\in B(0,R)$. We can find $t_n, t_n^1$ so that $t_n\to\infty$, $\|\eta(t_n,u)\| = R+1$ and $t_n^1$ is the smallest $t>t_n$ with $\|\eta(t_n^1,u)\| = R$. We may also assume that $\|\eta(s,u)\|\leq R+1$ for $s\in [t_n,t_n^1]$. Let $\kappa_n$ be as above. Then
	\[
	1\le \|\eta(t_n^1,u)-\eta(t_n,u)\| \le \frac2{\kappa_n}\left(\wt{\cJ}(\eta(t_n,u))-\wt{\cJ}(\eta(t_n^1,u))\right)
	\]
	and hence $\kappa_n\to 0$. So we see that there exist $\tu_n:= \eta(s_n^1,u)$, $s_n^1\in[t_n,t_n^1]$, such that  $R\le \|\wt u_n\|\le R+1$ and $\wt{\cJ}'(\tu_n)\to 0$. Thus we have found a Cerami sequence in $X^+\setminus B(0,M_0^\beta)$ which is impossible. This shows that case (ii) can never occur.\\
\indent
	(iii) There exist $R_0>0$ and $\delta>0$ such that $\|\wt{\cJ}'(v)\|\ge \delta/\|v\|$ whenever $\|v\|\ge R_0$ and $v\in \tJ_0^\beta$ (for otherwise there exists an unbounded Cerami sequence). Choose $t_0>0$ so that $\|\eta(t,u)\|\ge R_0$ and $\wt{\cJ}(\eta(t_0,u))-\wt{\cJ}(\eta(t,u)) \le \delta/8$ for $t\ge t_0$. For large $n$ let $t_n$ be the smallest $t$ such that $\|\eta(t,u)\|=n$, and let $\kappa_n := \min \{\|\wt{\cJ}'(\eta(t,u))\|: t\in[t_0,t_n]\}$. By the choice of $t_n$,
	\[
	\kappa_n \ge \min_{t\in [t_0,t_n]} \frac{\delta}{\|\eta(t,u)\|} = \frac{\delta}{\|\eta(t_n,u)\|}.
	\]
	It follows by the same argument as above that for $n$ large enough,
	\begin{eqnarray*}
		\frac12 \|\eta(t_n,u)\| & \le & \|\eta(t_n,u)-\eta(t_0,u)\| \le \frac2{\kappa_n}\left(\wt{\cJ}(\eta(t_0,u))-\wt{\cJ}(\eta(t_n,u))\right) \\
		& \le & \frac{2}{\delta}\|\eta(t_n,u)\|\left(\wt{\cJ}(\eta(t_0,u))-\wt{\cJ}(\eta(t_n,u))\right).
	\end{eqnarray*}
	This is a contradiction and hence also case (iii) can be ruled out. 
\end{proof}

Let $\Sigma := \{A\subset X^+: A=-A \text{ and } A \text{ is compact}\}$,
\[
\cH := \{h: X^+\to X^+ \text{ is a homeomorphism, }  h(-u)=-h(u) \text{ and } \wt{\cJ}(h(u))\le \wt{\cJ}(u) \text{ for all } u\},
\]
and for $A\in\Sigma$, put
\[
i^*(A) := \min_{h\in\cH} \gamma(h(A)\cap S(0,r))
\]
where $r$ is as in (I6), $S(0,r):=\{u\in X^+:\|u\|=r\}$ and $\gamma$ is Krasnoselskii's genus \cite{Struwe}. This is a variant of Benci's  pseudoindex \cite{bbf, be} and the following properties are adapted from \cite[Lemma 2.16]{SquassinaSzulkin}.

\begin{Lem} \label{index}
	Let $A,B\in\Sigma$.\\
	(i) If $A\subset B$, then $i^*(A)\le i^*(B)$. \\
	(ii) $i^*(A\cup B)\le i^*(A)+\gamma(B)$. \\
	(iii) If $g\in \cH$, then $i^*(A)\le i^*(g(A))$. \\
	(iv) Let $X_k$ be a $k$-dimensional subspace of $X^+$. Then $i^*(X_k\cap\overline B(0,R))\ge k$ whenever $R$ is large enough and $\overline B(0,R):=\{u\in X^+: \|u\|\leq R\}$.
\end{Lem}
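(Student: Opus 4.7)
The properties (i)--(iii) follow formally from the definition of $i^*$ together with the standard monotonicity, subadditivity, and odd-map axioms of the Krasnoselskii genus $\gamma$. For (i), for each $h\in\cH$ monotonicity of $\gamma$ gives $\gamma(h(A)\cap S(0,r))\le\gamma(h(B)\cap S(0,r))$, so $i^*(A)\le\gamma(h(B)\cap S(0,r))$ for every such $h$, and minimising over $h$ on the right yields $i^*(A)\le i^*(B)$. For (ii) I write
\[
h(A\cup B)\cap S(0,r)=\bigl(h(A)\cap S(0,r)\bigr)\cup\bigl(h(B)\cap S(0,r)\bigr),
\]
apply subadditivity of $\gamma$, and bound $\gamma(h(B)\cap S(0,r))\le\gamma(h(B))\le\gamma(B)$ (the last inequality because $h$ is odd and continuous); then minimising over $h$ gives the claim. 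For (iii), I first check that $h\circ g\in\cH$ whenever $g,h\in\cH$: the composition of odd homeomorphisms is an odd homeomorphism, and $\tJ(h(g(u)))\le\tJ(g(u))\le\tJ(u)$; hence $i^*(A)\le\gamma((h\circ g)(A)\cap S(0,r))=\gamma(h(g(A))\cap S(0,r))$, and minimising over $h$ yields $i^*(A)\le i^*(g(A))$.

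For (iv), set $A:=X_k\cap\overline B(0,R)$ and use Lemma~\ref{infty} to choose $R$ so large that $\tJ(u)<0$ for every $u\in X_k\cap S(0,R)$. Fix any $h\in\cH$. Since $\tJ\ge 0$ on $\overline B(0,r)$ (as recorded in the proof of Theorem~\ref{ThLink1}) and $\tJ(h(u))\le\tJ(u)<0$ on the relative boundary $\partial A=X_k\cap S(0,R)$, every boundary point of $A$ satisfies $\|h(u)\|>r$. Define
\[
V:=\{u\in A:\|h(u)\|=r\}.
\]
Then $V$ is a symmetric closed subset of the $k$-dimensional topological ball $A$, disjoint from $\partial A$, and it separates $0$ from $\partial A$ inside $A$: along any path in $A$ from $0$ to a point of $\partial A$, the continuous function $u\mapsto\|h(u)\|$ goes from $0$ to a value strictly greater than $r$, hence must meet $V$ by the intermediate value theorem. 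A standard Borsuk--Ulam argument (compare the proof of \cite[Lemma~2.16]{SquassinaSzulkin} and \cite{Struwe}) shows that any such symmetric closed set separating $0$ from the outer sphere in a $k$-ball has genus at least $k$, so $\gamma(V)\ge k$. Because $h$ is a homeomorphism, $h|_V$ is an odd homeomorphism onto $h(V)=h(A)\cap S(0,r)$, and the genus is preserved under odd homeomorphisms; consequently $\gamma(h(A)\cap S(0,r))=\gamma(V)\ge k$, and minimising over $h\in\cH$ yields $i^*(A)\ge k$.

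\textbf{Main obstacle.} The substantive step is the Borsuk--Ulam-type lower bound $\gamma(V)\ge k$ for a symmetric closed set separating the origin from the outer sphere of a $k$-dimensional ball; the remaining parts reduce formally to the axioms of $\gamma$, the definition of $\cH$, and the intermediate-value argument used to verify the separation property. The only subtle bookkeeping point is to record that $V$ (rather than the topological boundary of $\{u\in A:\|h(u)\|<r\}$) is the right object to track, since $h$ need not preserve the subspace $X_k$.
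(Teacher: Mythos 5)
Your proof is correct and follows essentially the same route as the paper's: parts (i)--(iii) are exactly the genus axioms plus the observation that $\cH$ is closed under composition, and part (iv) rests on the same Borsuk--Ulam application to the set $h^{-1}(S(0,r))\cap X_k$ after choosing $R$ via Lemma \ref{infty}. The only difference is cosmetic: the paper argues by contradiction, producing an odd map $f\circ h:\partial U\to\R^{k-1}\setminus\{0\}$ with $U=h^{-1}(B(0,r))\cap X_k$, whereas you bound $\gamma(V)$ of the separating set $V=A\cap h^{-1}(S(0,r))$ directly; the two formulations are interchangeable.
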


\begin{proof}
	(i) follows immediately from the properties of genus. \\
\indent	
	(ii) For each $h\in\cH$,
	\[
	i^*(A\cup B) \le \gamma(h(A\cup B)\cap S(0,r)) = \gamma ((h(A)\cup h(B))\cap S(0,r)) \le \gamma(h(A)\cap S(0,r)) + \gamma(B).
	\]
	Taking the minimum over all $h\in\cH$ on the right-hand side we obtain the conclusion. \\
	\indent
	(iii) Since $\wt{\cJ} (g(u))\le \wt{\cJ}(u)$ for all $u\in X^+$, $h\circ g\in\cH$ if $h\in\cH$. Hence $\{h\circ g: h\in\cH\}\subset \cH$ and therefore
	\[
	\min_{h\in\cH}\gamma(h(A)\cap S(0,r)) \le \min_{h\in\cH}\gamma((h\circ g)(A)\cap S(0,r)).
	\]
\indent 	(iv) By Lemma \ref{infty}, $\wt{\cJ}(u)<0$ on $X_k\setminus B(0,R)$ if $R$ is large enough. Let $D := X_k\cap\overline B(0,R)$. Suppose $i^*(D)<k$, choose $h\in\cH$ such that $\gamma(h(D)\cap S(0,r))<k$ and an odd mapping 
	$$f: h(D)\cap S(0,r) \to \R^{k-1}\setminus\{0\}.$$ 
	Let $U:= h^{-1}(B(0,r))\cap X_k$. Since $\wt{\cJ}(h(u))\le \wt{\cJ}(u)< 0$ for $u\in X_k\setminus B(0,R)$ and $\wt{\cJ}(u)\ge 0$ for $u\in B(0,r)$, it follows that $U\subset D\setminus\partial D$ and hence $U$ is an open and bounded neighbourhood of 0 in $X_k$. If $u\in \partial U$, then $h(u)\in S(0,r)$ and therefore $f\circ h: \partial U \to \R^{k-1}\setminus\{0\}$, contradicting the Borsuk-Ulam theorem \cite[Proposition II.5.2]{Struwe}, \cite[Theorem D.17]{Willem}. So $i^*(D)\ge k$.  
\end{proof}

\begin{altproof}{Theorem \ref{Th:CrticMulti}}
(a) Suppose that $\wt{\cJ}$ has no critical values in $[c_{\cM},c_{\cM}+\eps_0]$ for some $\eps_0\in (0,\eps]$. Thus $\wt{\cJ}$ has only the trivial critical point 0 in $\wt{\cJ}^{c_{\cM}+\eps_0}$.
Take $u\in \wt{\cJ}^{c_{\cM}+\eps_0}$ and
observe that by Lemma \ref{lem:flow}, either $\lim_{t \to T(u)}\eta(t,u)=0$ or $\lim_{t\to T(u)}\wt{\cJ}(\eta(t,u)) = -\infty$. 
Hence we may define the entrance time map $e:\wt{\cJ}^{c_{\cM}+\eps_0}\to [0,\infty)$ by the formula
$$e(u):=\inf\{t\in [0,T(u)):  \wt{\cJ}(\eta(t,u))\leq c_{\cM}/2\}.$$ 
Take any $\gamma\in\wt{\Gamma}$ such that 
$$\wt{\cJ}(\gamma(t))=\J(m(\gamma(t)))<c_{\cM}+\eps_0 \quad \text{for all } t\in[0,1]$$
where $\wt{\Gamma}$ is given by \eqref{eq:Gamma_Phi}. Since $e$ is continuous,
$\tilde{\gamma}(t):=\eta\big(e(\gamma(t)),\gamma(t)\big)$ is a continuous path in $X^+$ such that $\wt{\cJ}(\tilde{\gamma}(1))\leq \wt{\cJ}(\gamma(1))<0$. Hence $\tilde{\gamma}\in\wt{\Gamma}$ and 
$$c_{\cM}=\inf_{\sigma\in\wt{\Gamma}}\sup_{t\in [0,1]}\wt{\cJ}(\sigma(t))\leq \sup_{t\in [0,1]}\wt{\cJ}(\tilde{\gamma}(t))\leq c_{\cM}/2.$$
The obtained contradiction proves that either $c_{\cM}$ is a critical value or for any $\eps_0\in (0,\eps)$ we find a critical value in $(c_{\cM},c_{\cM}+\eps_0]$.\\
\indent
(b)	Take $\beta\geq a$ and let
	\begin{eqnarray*}
		\cK^\beta&:=&\{u\in \cK:  \wt{\cJ}(u)=\beta\}.
	\end{eqnarray*}
	Since there are finitely many critical orbits, there exists $\eps_0>0$ for which 
	\begin{equation}\label{eq:KL}
	\cK\cap \wt{\cJ}_{\beta-\eps_0}^{\beta+\eps_0}=\cK^\beta.
	\end{equation} 
	Choose $\delta\in (0,m_0^{\beta+\eps_0})$ such that $\overline B(u,\delta)\cap \overline B(v,\delta) = \emptyset$ for all $u,v\in\cK^\beta$, $u\ne v$ (this is possible due to Lemma \ref{discrete}). We show there is $\eps\in (0,\eps_0)$ such that
	\begin{equation}\label{eq:entrancetime1}
	\lim_{t\to T(u)} \wt{\cJ}(\eta(t,u)) < \beta -\eps \quad\hbox{for } u\in \wt{\cJ}^{\beta+\eps}_{\beta-\eps}\setminus B(\cK^\beta,\delta).
	\end{equation}
	We assume $\cK^\beta\ne\emptyset$, the other case being simpler.
	If $u\in \wt{\cJ}^{\beta+\eps_0}_{\beta-\eps_0}\setminus B(\cK^\beta,\delta)$ and $\lim_{t\to T(u)} \wt{\cJ}(\eta(t,u))\\ < \beta -\eps_0$, then \eqref{eq:entrancetime1} trivially holds. 
	Otherwise
	$$u\in A_0:=\big\{u\in \wt{\cJ}^{\beta+\eps_0}_{\beta-\eps_0}\setminus B(\cK^\beta,\delta): \lim_{t \to T(u)}\eta(t,u)\in \cK^\beta\big\}.$$ 
	Let $u\in A_0$ and define
	\begin{eqnarray*}
		t_0(u)&:=&\inf\big\{t\in [0,T(u)):  \eta(s,u)\in B(\cK^\beta,\delta)\hbox{ for all }s> t\big\},\\
		t(u)&:=&\inf\big\{t\in [t_0(u),T(u)):  \eta(t,u)\in B(\cK^\beta,\delta/2)\big\}
	\end{eqnarray*}
	and note that $0\leq t_0(u)< t(u)<T(u)$.  By \eqref{eq:flow2} we have
	\begin{eqnarray} \label{eq:t}
		\frac{\delta}{2} \leq
		\|\eta(t_0(u),u)-\eta(t(u),u)\|  \leq \int_{t_0(u)}^{t(u)}\|v(\eta(s,u))\|\, ds
		\leq t(u)-t_0(u). 
	\end{eqnarray}
	Let
	$$\rho:=\inf\{\|\wt{\cJ}'(\eta(t,u))\|: u\in A_0,\ t\in[t_0(u),t(u)]\}.$$
	If $\rho=0$ then we find $u_n\in A_0 $ and $t_n\in (t_0(u_n),t(u_n))$ such that $$\wt{\cJ}'(\eta(t_n,u_n))\to 0 \hbox{ as } n\to\infty.$$
	Since $t_n> t_0(u_n)$, we have $\eta(t_n,u_n)\in B(\cK^\beta,\delta)$ and passing to a subsequence we can find $u_0\in\cK^\beta$ and $g_n\in G$ such that
	$$g_n\eta(t_n,u_n)\in B(u_0,\delta).$$ Since $t_n<t(u_n)$, we see that
	$$g_{n}\eta(t_n,u_n)\notin B(\cK^\beta,\delta/2).$$
	Let $\tu_n:=u_0$, $\tv_n:=g_n\eta(t_n,u_n)$. Then $\tu_n$ and $\tv_n$ are two Cerami (in fact Palais-Smale)
	 sequences such that $\delta/2 \le \|\tv_n-\tu_n\| \le \delta<m_0^{\beta+\eps_0}$, a contradiction.
Therefore $\rho>0$ and we take 
	$$\eps < \min\Big\{\eps_0,\frac{\delta\rho}8\Big\}, \quad u\in \tJ_{\beta-\eps}^{\beta+\eps}\setminus B(\cK^\beta,\delta).$$ 
	Since
	\begin{eqnarray*}
		\wt{\cJ}(\eta(t(u),u))-\wt{\cJ}(\eta(t_0(u),u)) &=&
		-\int_{t_0(u)}^{t(u)}\wt{\cJ}'(\eta(s,u))[v(\eta(s,u))]\,ds\\
		&\leq& 
		-\frac12\int_{t_0(u)}^{t(u)}\|\wt{\cJ}'(\eta(s,u)\|\, ds,
	\end{eqnarray*}
	we obtain using \eqref{eq:t}
	\begin{eqnarray*}
		 \lim_{t\to T(u)} \wt{\cJ}(\eta(t,u))&\leq& \wt{\cJ}(\eta(t(u),u))
		\leq\beta +\eps -\frac12\int_{t_0(u)}^{t(u)}\|\wt{\cJ}'(\eta(s,u)\|\, ds\\
		&\leq& \beta+\eps -\frac{\delta\rho}4 < \beta-\eps.
	\end{eqnarray*}
	Hence $A_0=\emptyset$ which proves \eqref{eq:entrancetime1}. Note that this argument also shows $\eta(t,u)$ will not enter the set $B(\cK^\beta,\delta/2)$ if $u\in \tJ_{\beta-\eps}^{\beta+\eps}\setminus B(\cK^\beta,\delta)$. \\
	\indent
	Define 
	\[
	\beta_k :=  \inf_{i^*(A)\ge k} \sup_{u\in A}\wt{\cJ}(u), \quad k=1,2,\ldots.
	\]
	and note that by Lemma \ref{index}  all $\beta_k$ are well defined, finite and $a\le\beta_1\le\beta_2\le\ldots$.
	Let $\beta=\beta_k$  for some $k\geq 1$. If the set $\cK^\beta$ is nonempty, it is (at most) countable, so we can order its elements in pairs $\pm u_j$ and let the map $f:\cK^\beta\to\R\setminus\{0\}$ be given by $f(\pm u_j)=\pm 1$. This shows that by the choice of $\delta$,
	$$\gamma (\overline B(\cK^\beta,\delta))=\gamma (\cK^\beta)=1.$$
	Choose $\eps>0$ such that \eqref{eq:entrancetime1} holds. 
	Take Lipschitz continuous cutoff functions $\chi,\xi$ such that $\chi=0$ in $B(\cK^\beta,\delta/4)$, $\chi=1$ in $X^+\setminus B(\cK^\beta,\delta/2)$ and $\xi=1$ in $\wt{\cJ}_{\beta-\eps}^{\beta+\eps}$, $\xi=0$ in $X^+\setminus U$ , where $U$ is an open neighbourhood of $\wt{\cJ}_{\beta-\eps}^{\beta+\eps}$ with $\cK\cap U=\cK^\beta$.
	Let
	$\wt\eta:\R\times X^+\to  X^+$ be the flow given by
	\begin{equation*}
	\left\{
	\begin{aligned}
	&\partial_t \wt\eta(t,u)=-\chi(\wt\eta(t,u))\xi(\wt\eta(t,u)) v(\wt\eta(t,u))\\
	&\wt\eta(0,u)=u.
	\end{aligned}
	\right.
	\end{equation*}
	Then $\wt\eta(t,u)=\eta(t,u)$ as long as $t\ge 0$ and  $ \wt\eta(t,u)\in \wt{\cJ}^{\beta+\eps}_{\beta-\eps}\setminus B(\cK^\beta,\delta/2)$. 	Using  \eqref{eq:entrancetime1} we can define the entrance time map $e:\wt{\cJ}^{\beta+\eps}\setminus B(\cK^\beta,\delta)\to [0,\infty)$:
	$$e(u):=\inf\{t\in [0,\infty):  \wt{\cJ}(\wt\eta(s,u))\leq \beta -\eps\}.$$ 
	Since $\eta(s,u)\notin B(\cK^\beta,\delta/2)$ as we have observed, $e$ is finite. It is standard to show that $e$ is continuous and even. Take any $A\in\Sigma$ such that $i^*(A)\geq k$ and $\wt{\cJ}(u)\leq \beta+\eps$ for $u\in A$. Let $T:=\sup_{u\in A} e(u)$; then $T<\infty$ since $A$ is compact. Set $h:=\wt\eta(T,\cdot)$ and note that $h\in\cH$ and
	$$h(A\setminus B(\cK^\beta,\delta)) \subset \wt{\cJ}^{\beta-\eps}.$$
Therefore
	$$i^*(A\setminus B(\cK^\beta,\delta))\leq 
	i^*(h(A\setminus B(\cK^\beta,\delta)))\leq  k-1$$
	and
	\begin{equation}\label{eq:LSvaluse}
	k\leq i^*(A)\leq \gamma (\overline B(\cK^\beta,\delta)\cap A)+
	i^*(A\setminus B(\cK^\beta,\delta))
	\leq \gamma(\cK^\beta)+k-1.
	\end{equation}
	Thus $\cK^\beta\neq \emptyset$, so as we have shown above, $\gamma(\cK^\beta)=1$.	If $\beta_k=\beta_{k+1}$ for some
	$k\geq 1$, then \eqref{eq:LSvaluse} implies $\gamma(\cK^{\beta_k})\geq 2$, a contradiction. Hence we get an infinite sequence
	$\beta_1<\beta_2<...$
	of critical values which contradicts our assumption that $\cK$ consists of a finite number of distinct orbits. 
	This completes the proof.	
\end{altproof}

\section{Properties of the functional $J$ for curl-curl}

Recall our earlier assumption that (N1)--(N3) and (F1)--(F5) hold. We will check that assumptions (I1)--(I8) are satisfied and we want to apply Theorems \ref{ThLink1} and \ref{Th:CrticMulti}.\\
\indent Define the manifold
\begin{eqnarray}\label{DefOfN}
\mathcal{M} &:=& \{(v,w)\in\V\times\W:
\J'(v,w)[(0,\psi)]=0\,\hbox{ for any }\psi\in \W\}
\end{eqnarray}
and the Nehari-Pankov set for $\J$
\begin{eqnarray}\label{DefOfM}
\mathcal{N} &:=& \{(v,w)\in\V\times\W: u\neq 0,\; 
\J'(v,w)[(v,w)]=0 \\\nonumber
&&\hbox{ and }\J'(v,w)[(0,\psi)]=0\,\hbox{ for any }\psi\in \W\} 
\ \subset\ \cM. \nonumber
\end{eqnarray}
Observe that $E=v+w\in\cN_{\cE}$ if and only if $(v,w)\in\cN$ ($\cN_{\cE}$ is defined in \eqref{DefOfNehari1}). Moreover, $\mathcal{N}$ contains all nontrivial critical points of $\J$. In general  $\cN_{\cE}$, $\cN$ and $\cM$ are not $\cC^1$-manifolds.

\begin{Prop}\label{Propuv_N}
	If $(v,w)\in \V\times\W$ then
	$$\J(tv,tw+\psi)-\J'(v,w)\Big[\Big(\frac{t^2-1}{2}v,\frac{t^2-1}{2}w+t\psi\Big)\Big]\leq\J(v,w)$$
	for any $\psi\in\W$ and $t\geq 0$. 
\end{Prop}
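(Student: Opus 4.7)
The plan is to reduce the claim to a pointwise inequality. Fix $(v,w)\in\V\times\W$, $\psi\in\W$, $t\ge 0$, and set $u:=v+w$. A direct expansion (in which the $\tfrac12|\nabla v|_2^2$ contributions cancel, since the quadratic part of $\J$ depends only on $v$ and their coefficients sum to $\tfrac12-\tfrac{t^2}{2}+\tfrac{t^2-1}{2}=0$) shows that the inequality of the proposition is equivalent to
\[
\int_{\R^3}\!\left[F(x,tu+\psi)-F(x,u)-\Bigl\langle f(x,u),\tfrac{t^2-1}{2}u+t\psi\Bigr\rangle\right]dx\ \ge\ 0,
\]
with all integrals finite by Lemma \ref{Emb} and the growth control from the proof of Proposition \ref{classC1}. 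It will thus suffice to prove the pointwise statement, for a.e.\ $x\in\R^3$ and every $u,\psi\in\R^3$, $t\ge 0$:
\[
F(x,tu+\psi)\ \ge\ F(x,u)+\tfrac{t^2-1}{2}\langle f(x,u),u\rangle+t\langle f(x,u),\psi\rangle.
\]

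Fix $x$ (suppressed), set $A:=\langle f(u),u\rangle$, $B:=\langle f(u),\psi\rangle$ and define
\[
\phi(t):=F(tu+\psi)-\tfrac{t^2-1}{2}A-tB,\qquad t\ge 0,
\]
so the goal is $\phi(t)\ge F(u)$ for all $t\ge 0$. The case $u=0$ is trivial: then $\phi\equiv F(\psi)\ge 0=F(x,0)$, where $F(x,0)=0$ follows by combining (F3) with (F4). Otherwise (F3), (F4) give $A\ge 2F(u)\ge 2c_2\Phi(|u|)>0$, while (F3), (N3) force $F(tu+\psi)/t^2\to\infty$; hence $\phi$ is continuous with $\phi(t)\to\infty$, and attains a minimum at some $t^*\in[0,\infty)$.

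The heart of the argument is the observation that the first-order optimality $\phi'(t^*)=0$ at an interior minimizer reads $\langle f(\varphi^*),u\rangle=t^*A+B=\langle f(u),\varphi^*\rangle$, where $\varphi^*:=t^*u+\psi$, so the symmetry hypothesis of (F5) is \emph{automatic} at $t^*$. Setting $\beta^*:=t^*A+B$ and eliminating $B=\beta^*-t^*A$ rewrites $\phi(t^*)=F(\varphi^*)+\tfrac{(t^*)^2+1}{2}A-t^*\beta^*$. If $\beta^*>0$, then (F5) yields $F(u)-F(\varphi^*)\le(A^2-(\beta^*)^2)/(2A)$, and a direct algebraic identity collapses the desired bound to the perfect square $\phi(t^*)-F(u)\ge(At^*-\beta^*)^2/(2A)\ge 0$. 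If $\beta^*\le 0$, then $-t^*\beta^*\ge 0$ and $F(\varphi^*)\ge 0$, so $\phi(t^*)\ge A/2\ge F(u)$ by (F4); the boundary case $t^*=0$ is handled identically via $\phi(0)=F(\psi)+A/2\ge A/2\ge F(u)$. The only conceptually subtle step -- and the one I expect to be the main obstacle -- is the above matching between the optimality condition for $\phi$ and the symmetry hypothesis of (F5); it is precisely this matching that explains why the weaker form of (F5) adopted in the present paper (in contrast to the strict variant of \cite[(F5)]{Mederski2015}) is already enough to close the argument.
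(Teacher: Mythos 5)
Your proof is correct. Both you and the paper reduce the claim to the same pointwise inequality and both extract the hypothesis of (F5) from a first--order optimality condition, but the optimization is organized differently, and the difference is substantive. The paper fixes $u=v+w$ and maximizes the auxiliary function $\varphi(t,\psi)$ \emph{jointly} over $(t,\psi)\in[0,\infty)\times\R^3$; coercivity in the two variables requires a negative-definite quadratic-form trick (splitting off $A|\zeta|^2$ with $A$ large and invoking superquadraticity of $F$), and at the maximum the two stationarity conditions give $f(x,\zeta)=tf(x,u)$ and $\langle f(x,u),\psi\rangle=0$, after which (F5) is applied in the special configuration $\langle f(x,u),\zeta\rangle=\langle f(x,\zeta),u\rangle=t\langle f(x,u),u\rangle$. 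You instead freeze $\psi$ and minimize only over $t$, so coercivity is immediate from (F3) and (N3); your single stationarity condition $\langle f(x,\varphi^*),u\rangle=t^*A+B=\langle f(x,u),\varphi^*\rangle$ already produces the symmetric hypothesis of (F5) for an \emph{arbitrary} value $\beta^*$ of the common inner product, and the conclusion follows from the perfect square $(At^*-\beta^*)^2/(2A)\ge 0$ together with the crude bounds in the cases $\beta^*\le 0$ and $t^*=0$. Your version is somewhat more economical (no two-variable coercivity argument, no need to derive $f(x,\zeta)=tf(x,u)$), while the paper's version makes the geometric role of the joint maximizer on the half-space $\R^+u+\wt X$ more visible. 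All the auxiliary facts you use ($F(x,0)=0$, $A>0$ for $u\ne 0$, $\phi(t)\to\infty$, integrability of each term) check out against (F1)--(F5) and (N3).
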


\begin{proof}
	Let $(v,w)\in\V\times\W$, $\psi\in\W$, $t\geq 0$. We define
	$$D(t,\psi):=\J(t v,tw+\psi)-\J(v,w)-\J'(v,w)\Big[\Big(\frac{t^2-1}{2}v,\frac{t^2-1}{2}w+t\psi\Big)\Big]$$
and observe that
	\begin{eqnarray*}
		D(t,\psi)
		&=&\int_{\R^3}\langle f(x,v+w),\frac{t^2-1}{2}(v+w)+t\psi\rangle\, dx\\
		&&+\int_{\R^3}F(x,v+w)-F(x,t(v+w)+\psi)\,dx.
	\end{eqnarray*}
	For fixed $v,w\in\R^3$, define a map $\varphi:[0,+\infty)\times \R^3\to \R$ as follows:
	$$\varphi(t,\psi):=
	\langle f(x,v+w),\frac{t^2-1}{2}(v+w)+t\psi\rangle+F(x,v+w)-F(x,t(v+w)+\psi).$$
	We shall show that $\vp(t,\psi)\leq 0$ for all $t\geq 0$, $\psi\in\R^3$. This is clear if $v+w=0$. So let $v+w\ne 0$
	 and $\zeta := t(v+w)+\psi$.
	By (F3), (F4) we have
	$\varphi(0,\psi)\leq 0$ and 
	\begin{eqnarray*}
		\varphi(t,\psi) & \le & \langle f(x,v+w),\frac{t^2-1}{2}(v+w)+t(\zeta-t(v+w))\rangle+\frac12\langle f(x,v+w),v+w\rangle-F(x,\zeta) \\
		& = & -\frac12t^2\langle f(x,v+w),v+w\rangle + t\langle f(x,v+w),\zeta\rangle - A|\zeta|^2 + (A|\zeta|^2-F(x,\zeta)).
	\end{eqnarray*}
	If $A$ is large enough, then the quadratic form (in $t$ and $\zeta$) above is negative definite. Moreover, $A|\zeta|^2-F(x,\zeta)$ is bounded above by superquadraticity of $F$ implied by (F3) and (N3). Hence $\varphi(t,\psi)	\to -\infty$ as $t+|\psi|\to\infty$ and  $\varphi$ attains a maximum at some $(t,\psi)$ with $t\geq 0$.  If $t=0$, then $\vp(t,\psi)\leq 0$ as we have already mentioned. If $t>0$, then
	\begin{eqnarray}
	\partial_t\varphi(t,\psi) & = &  \langle f(x,v+w),t(v+w)+\psi \rangle - \langle f(x,t(v+w)+\psi),v+w\rangle = 0, \label{1st} \\
	\partial_\psi\varphi(t,\psi) &  = & tf(x,v+w) - f(x,t(v+w)+\psi) = 0. \label{2nd}
	\end{eqnarray} 
	Using \eqref{2nd} in \eqref{1st} we see that both terms in \eqref{1st} are positive (because $\langle f(x,v+w),v+w\rangle>0$) and $\langle f(x,v+w),\psi \rangle = 0$. This and (F5) imply
	$$
	\varphi(t,\psi) = \frac{t^2-1}{2}\langle f(x,v+w),v+w \rangle+F(x,v+w)-F(x,t(v+w)+\psi) \le 0.
	$$
\end{proof}

Consider $I:L^{\Phi}\to \R$ and  $\cI:L^{\Phi}\times \W\to\R$ given by
\begin{equation}\label{DefOfXi}
\cI(v,w):=I(v+w):=\int_{\R^3}F(x,v+w)\, dx\quad\hbox{ for }(v,w)\in L^{\Phi}\times\W.
\end{equation}
By Proposition \ref{classC1}, $I$ and $\cI$ are of class $\cC^1$. In view of (F2), $I$ and $\cI$ are strictly convex. Moreover, the following property holds.

\begin{Lem}\label{LemConvWeakIpliesStrong}
	If $E_n\rightharpoonup E$ in $L^{\Phi}$ and
	$I(E_n)\to I(E)$
	then $E_n\to E$ in $L^{\Phi}$.
\end{Lem}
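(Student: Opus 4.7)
The plan is to use the uniform strict convexity (F2) together with a midpoint-convexity trick to obtain convergence in measure, and then to invoke Vitali's theorem via the equi-integrability of $\Phi(|E_n|)$ forced by the hypothesis. Throughout I use that $I$ is convex and continuous on $L^\Phi$ (Proposition \ref{classC1}), hence weakly sequentially lower semicontinuous. Setting
\[
g_n(x) := \tfrac12 F(x,E_n(x)) + \tfrac12 F(x,E(x)) - F\Bigl(x, \tfrac{E_n(x)+E(x)}{2}\Bigr) \ge 0,
\]
and noting that $\tfrac12(E_n+E)\rightharpoonup E$, one has $\liminf I(\tfrac12(E_n+E))\ge I(E)$ while convexity gives $I(\tfrac12(E_n+E))\le \tfrac12(I(E_n)+I(E))\to I(E)$. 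Hence $I(\tfrac12(E_n+E))\to I(E)$ and $\int_{\R^3} g_n\,dx \to 0$.

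Next, for arbitrary $\delta, M>0$, (F2) applied to the compact set $A_{M,\delta}:=\{(u_1,u_2)\in\R^3\times\R^3 : |u_1-u_2|\ge\delta,\ |u_1|,|u_2|\le M\}$ (which is bounded away from the diagonal) yields $\eps(M,\delta)>0$ with $g_n(x)\ge\eps(M,\delta)$ on $B_n^{M,\delta}:=\{|E_n-E|\ge\delta\}\cap\{|E_n|,|E|\le M\}$, so $|B_n^{M,\delta}|\le \int g_n/\eps(M,\delta)\to 0$ as $n\to\infty$. From (F3) and $I(E_n)\to I(E)$, $\int\Phi(|E_n|)\le I(E_n)/c_2\le C$ uniformly in $n$, so Markov's inequality gives $|\{|E_n|>M\}|\le C/\Phi(M)$, which tends to $0$ as $M\to\infty$ uniformly in $n$ (and similarly for $E$). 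Since $\{|E_n-E|\ge\delta\}\subset B_n^{M,\delta}\cup\{|E_n|>M\}\cup\{|E|>M\}$, choosing $M$ large first and then $n$ large yields $|\{|E_n-E|\ge\delta\}|\to 0$ for every $\delta>0$, i.e.\ $E_n\to E$ in measure on $\R^3$. The main obstacle here is that pointwise strict convexity of $F(x,\cdot)$ alone cannot rule out the possibility that $|E_n(x)|\to\infty$ while $g_n(x)\to 0$; it is precisely the $x$-uniform bound in (F2) together with the Markov tail estimate with $\Phi$ that circumvents this.

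Finally, by the subsequence principle it suffices, given any subsequence, to extract a further subsequence along which $|E_n-E|_\Phi\to 0$. Pick a sub-subsequence (not relabeled) with $E_n\to E$ a.e., available from convergence in measure. Then $F(x,E_n)\to F(x,E)$ a.e., and since $F\ge 0$ with $\int F(x,E_n)\to\int F(x,E)<\infty$, Scheff\'e's theorem gives $F(x,E_n)\to F(x,E)$ in $L^1(\R^3)$. Thus $\{F(x,E_n)\}$ is equi-integrable, and by (F3) so is $\{\Phi(|E_n|)\}$. Applying the $\Delta_2$-condition twice, $\Phi(|E_n-E|)\le \Phi(|E_n|+|E|)\le K\bigl(\Phi(|E_n|)+\Phi(|E|)\bigr)$ for some $K>0$, so $\{\Phi(|E_n-E|)\}$ is equi-integrable, and it converges a.e.\ to $0$. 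Vitali's theorem gives $\int_{\R^3}\Phi(|E_n-E|)\,dx\to 0$, and Lemma \ref{AllProp}(iii) upgrades this to $|E_n-E|_\Phi\to 0$, completing the proof.
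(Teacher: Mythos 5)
Your proof is correct. The first half --- the midpoint convexity trick ($\int_{\R^3}g_n\,dx\to 0$ from weak lower semicontinuity plus convexity of $I$, then (F2) on the truncated sets) --- is exactly the paper's argument; in fact you are more careful than the paper, which passes from $|\Om_n|\to 0$ to a.e.\ convergence without explicitly controlling the sets $\{|E_n|>R\}$, whereas your Markov estimate $|\{|E_n|>M\}|\le C/\Phi(M)$ supplies that tail bound. Where you genuinely depart is in upgrading a.e.\ convergence to norm convergence. The paper invokes its Brezis--Lieb-type Lemma \ref{LemBrezLieb} to get $\int_{\R^3}F(x,E_n-E)\,dx\to 0$ and then applies (F3) and Lemma \ref{AllProp}(iii); you instead apply Scheff\'e's theorem to $F(\cdot,E_n)$, transfer the resulting uniform integrability and tightness to $\Phi(|E_n-E|)$ via (F3) and the $\Delta_2$-condition, and conclude with Vitali plus Lemma \ref{AllProp}(iii). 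Both routes ultimately rest on a Vitali-type argument (Lemma \ref{LemBrezLieb} is itself proved by Vitali), but yours is more self-contained: it never uses the derivative $f$, the bound $|f(x,u)|\le c_1\Phi'(|u|)$, or the duality estimate via $L^\Psi$, only $F\ge c_2\Phi\ge 0$ and $\Phi\in\Delta_2$. Since Lemma \ref{LemBrezLieb} is not used anywhere else in the paper, your argument would in fact allow that lemma to be dropped altogether; the paper's version, on the other hand, yields the slightly stronger splitting statement of Brezis--Lieb type, which can be useful in concentration-compactness arguments beyond this lemma.
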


Before proving the above lemma we need a variant of the Brezis-Lieb result \cite{BrezisLieb} for sequences in $L^{\Phi}$. 
\begin{Lem}\label{LemBrezLieb}
	Let $(E_n)$ be a bounded sequence in $L^{\Phi}$ such that
	$E_n\to E$ a.e. on $\R^3$.
	Then
	$$\lim_{n\to+\infty}\int_{\R^3}F(x,E_n)-F(x,E_n-E)\, dx=\int_{\R^3}F(x,E)\, dx.$$
\end{Lem}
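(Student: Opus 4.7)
My plan is to carry out a Brezis--Lieb type argument adapted to the Orlicz setting. The target quantity is $\int_{\R^3} \bigl( F(x,E_n) - F(x, E_n-E) - F(x,E)\bigr)\, dx$, which I want to show tends to $0$ in absolute value. Since $E_n \to E$ a.e., the integrand converges to $0$ pointwise a.e., so the issue is to gain enough uniform integrability to pass to the limit.

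The central step is a Young-type inequality: for every $\eps>0$ there exists $C_\eps>0$ such that for all $a,b\in\R^3$ and a.e.\ $x\in\R^3$,
\[
\bigl|F(x,a+b)-F(x,a)-F(x,b)\bigr| \;\le\; \eps\,\Phi(|a|) + C_\eps\,\Phi(|b|).
\]
To prove this I would use the mean value theorem to write $F(x,a+b)-F(x,a) = \int_0^1 \langle f(x,a+tb),b\rangle\,dt$, bound it by $c_1\Phi'(|a|+|b|)|b|$ via (F3), and then split into the regimes $|b|\le\eps'|a|$ and $|b|>\eps'|a|$. In the first case, by $\Delta_2$, $\Phi'(|a|+|b|)\le C\Phi'(|a|)$ and by Lemma \ref{AllProp}(i), $\Phi'(|a|)|a|\le K\Phi(|a|)$, yielding the $\eps\Phi(|a|)$ contribution (up to choosing $\eps'$ small); in the second case, again by $\Delta_2$, $\Phi'(|a|+|b|)\le C(\eps')\Phi'(|b|)$ so the expression is controlled by $C_\eps\Phi(|b|)$. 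The term $F(x,b)$ is absorbed since $F(x,b)\le c_1\Phi(|b|)$ by (F3).

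Armed with this inequality, I would follow the classical Brezis--Lieb device. Set
\[
W_n^\eps(x) := \bigl(\,\bigl|F(x,E_n)-F(x,E_n-E)-F(x,E)\bigr| - \eps\,\Phi(|E_n-E|)\,\bigr)^+.
\]
Taking $a=E_n-E$ and $b=E$ in the key inequality gives the pointwise bound $W_n^\eps \le C_\eps\,\Phi(|E|)$. Since $(E_n)$ is bounded in $L^\Phi$ and $E_n\to E$ a.e., Fatou's lemma for $\Phi$ shows $E\in L^\Phi$, so $C_\eps\Phi(|E|)$ is integrable. Moreover $W_n^\eps\to 0$ a.e.\ because $E_n-E\to 0$ a.e., so dominated convergence gives $\int_{\R^3} W_n^\eps\,dx\to 0$ as $n\to\infty$, for each fixed $\eps>0$.

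Finally, by the definition of $W_n^\eps$,
\[
\int_{\R^3} \bigl|F(x,E_n)-F(x,E_n-E)-F(x,E)\bigr|\,dx \;\le\; \int_{\R^3} W_n^\eps\,dx + \eps \int_{\R^3} \Phi(|E_n-E|)\,dx.
\]
The sequence $(E_n-E)$ is bounded in $L^\Phi$, so by Lemma \ref{AllProp}(iv) the second integral is bounded uniformly in $n$. Letting first $n\to\infty$ and then $\eps\to 0$ delivers the claimed limit. The main obstacle is the proof of the Young-type inequality: its form is dictated by the Brezis--Lieb argument and the correct interplay between the $\Delta_2$ and $\nabla_2$ conditions is exactly what makes the two-regime splitting work.
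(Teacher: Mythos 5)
Your argument is correct, but it follows a genuinely different route from the paper. The paper writes $F(x,E_n)-F(x,E_n-E)=\int_0^1\langle f(x,E_n-E+tE),E\rangle\,dt$ and then passes to the limit by the \emph{Vitali} convergence theorem: the family $\langle f(x,E_n-E+tE),E\rangle$ is shown to be uniformly integrable and tight via the Orlicz--H\"older inequality of Lemma \ref{AllProp}(ii), the $L^\Psi$-bound on $f(\cdot,E_n-E+tE)$ from Lemma \ref{forC1}, and the absolute continuity of the $L^\Phi$-norm of $E$; the "domination" is thus carried by $|E\chi_\Omega|_\Phi$ over small and far sets. You instead run the classical Brezis--Lieb device: the Young-type inequality $|F(x,a+b)-F(x,a)-F(x,b)|\le\eps\Phi(|a|)+C_\eps\Phi(|b|)$, the truncation $W_n^\eps$, dominated convergence with majorant $C_\eps\Phi(|E|)$, and the uniform bound on $\int_{\R^3}\Phi(|E_n-E|)\,dx$ from Lemma \ref{AllProp}(iv). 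Your two-regime proof of the key inequality is sound: in the regime $|b|\le\eps'|a|$ one combines monotonicity of $\Phi'$, the bound $\Phi'(\lambda t)\le\lambda^{-1}\Phi(2\lambda t)/t\le C(\lambda)\Phi'(t)$ coming from convexity and $\Delta_2$, and $t\Phi'(t)\le K\Phi(t)$ from Lemma \ref{AllProp}(i); the other regime is similar. Note, however, that only the $\Delta_2$-condition enters here --- the $\nabla_2$-condition plays no role in this splitting, contrary to your closing remark. Your approach buys an explicit quantitative estimate and avoids invoking Vitali and the absolute continuity of the Orlicz norm, at the price of proving the auxiliary inequality; the paper's approach is shorter given that the uniform integrability machinery is needed elsewhere anyway (e.g.\ in Proposition \ref{prop} and Corollary \ref{CorJweaklycont}). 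One small point worth making explicit in your write-up: the pointwise limit $F(x,E_n)-F(x,E_n-E)-F(x,E)\to 0$ uses $F(x,0)=0$, which follows from (F3) and (F4).
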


\begin{proof}
	Note that
	\begin{eqnarray*}
		\int_{\R^3}F(x,E_n)-F(x,E_n-E)\, dx
		&=&\int_{\R^3}\int_0^1\frac{d}{dt}F(x,E_n-E+tE)\, dtdx\\
		&=&\int_0^1\int_{\R^3}\langle f(x,E_n-E+tE),E\rangle\, dxdt
	\end{eqnarray*}
	and $f(x,E_n-E+tE)$ is bounded
	in $L^{\Psi}$ according to (F3) and Lemmas \ref{AllProp} (iv), \ref{forC1}.
	Thus for any $\Omega\subset\R^3$,
	\begin{eqnarray} \label{*}
		\int_{\Omega}|\langle f(x,E_n-E+tE),E\rangle|\, dx
		\leq |f(x,E_n-E+tE)|_{\Psi}
		|E\chi_{\Omega}|_{\Phi}.
	\end{eqnarray}
	By \cite[Definition III.4.2, Corollary III.4.5 and Theorem III.4.14]{RaoRen} the space $L^\Phi$ has an absolutely continuous norm, so by \eqref{*}, for any $\eps>0$ there is $\delta>0$
	such that if $|\Omega|<\delta$ ($|\Om|$ denotes the measure of $\Om$), then
	$$\int_{\Omega}|\langle f(x,E_n-E+tE),E\rangle|\, dx  < \eps$$
	independently of $n$.
	Thus $(\langle f(x,E_n-E+tE),E\rangle )$ is uniformly integrable.
	Using \eqref{*} once more we see that for any $\eps>0$ there is $\Omega\subset\R^3$ with 
	$|\Omega|<+\infty$ such that
	$$\int_{\Omega^c}\langle f(x,E_n-E+tE),E\rangle\, dx < \eps.$$
	Indeed, if $\chi_n$ is the characteristic function of the set $|x|\ge n$, then $\int_{\R^3}\Phi(|E\chi_n|)\,dx \to 0$ and therefore $|E\chi_n|_\Phi\to 0$ by Lemma \ref{AllProp}(iii).
	Hence $\Om$ exists as claimed and $(\langle f(x,E_n-E+tE),E\rangle )$ is tight.
	Since $E_n(x)-E(x)\to 0$ a.e. on $\R^3$, it follows from the Vitali convergence theorem that
	$$\int_{\R^3}F(x,E_n)-F(x,E_n-E)\, dx\to
	\int_0^1\int_{\R^3}\langle f(x,tE),E\rangle\, dxdt=\int_{\R^3}F(x,E)\, dx.$$
\end{proof}

\begin{altproof}{Lemma \ref{LemConvWeakIpliesStrong}}
	We show that (up to a subsequence) $E_n(x)\to E(x)$ a.e. on $\R^3$.
	Since $I(E_n)\to I(E)$, we have
	\begin{equation} \label{i3}
	\lim_{n\to\infty}\int_{\R^3}F(x,E_n)\,dx=\int_{\R^3}F(x,E)\,dx.
	\end{equation}
	Then from (F2) we infer that for any $0<r\leq R$,
	\begin{equation} \label{mrR}
	m_{r,R}:=\inf_{\substack{x,u_1,u_2\in\R^3\\ r\leq|u_1-u_2|,\\|u_1|,|u_2|\leq R} }\;
	\frac{1}{2}(F(x,u_1)+F(x,u_2))-F\Big(x,\frac{u_1+u_2}{2}\Big)>0.
	\end{equation} 
	Observe that by \eqref{i3} and convexity of $F$,
	$$
	0
	\leq \limsup_{n\to\infty}\int_{\R^3}\frac12(F(x,E_n) + F(x,E))
	- F\left(x,\frac{E_n+E}{2}\right)\,dx
	\leq 0.
	$$
	Therefore, setting
	$$
	\Om_n:=\{x\in\R^3: |E_n-E|\geq r,\;|E_n|\leq R,\;
	|E|\leq R\},
	$$
	there holds
	$$
	|\Om_n| m_{r,R}
	\leq \int_{\R^3}\frac12(F(x,E_n) + F(x,E))
	- F\left(x,\frac{E_n+E}{2}\right)\, dx,
	$$
	and thus $|\Om_n|\to 0$ as $n\to\infty$. Since $0<r\leq R$ are arbitrarily chosen, we deduce
	$$
	E_n\to E\hbox{ a.e.\ on }\R^3.
	$$
	In view of Lemma \ref{LemBrezLieb},  we obtain
	$$\int_{\R^3}F(x,E_n)\, dx-\int_{\R^3}F(x,E_n-E)\, dx\to\int_{\R^3}F(x,E)\, dx$$
	and hence
	$$\int_{\R^3}F(x,E_n-E)\, dx\to0.$$
	By (F3) and Lemma \ref{AllProp} (iii) we get $|E_n-E|_\Phi\to 0$.
\end{altproof}

\begin{Prop}\label{PropDefOfm(u)} Conditions  (I1)--(I8) are satisfied and
 there is a Cerami sequence $(v_n,w_n)\subset \cM$ at the level $c_{\cN}$, i.e.   $\J(v_n,w_n)\to c_{\cN}$ and $(1+\|(v_n,w_n)\|)\J'(v_n,w_n)\to 0$ as $n\to\infty$, where
	$$c_{\cN}:=\inf_{(v,w)\in\mathcal{N}}\J(v,w)>0.$$
\end{Prop}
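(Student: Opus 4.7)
The plan is to cast the result into the abstract framework of Section~\ref{sec:criticaslpoitth} by setting $X=\V\times\W$, $X^+=\V$, $\tX=\W$, $\cJ=\J$, and $\cI(v,w)=\int_{\R^3}F(x,v+w)\,dx$, verify the eight axioms (I1)--(I8), and invoke Theorem~\ref{ThLink1}(a)--(b). A preliminary fact that I will use is $F(x,0)=0$: (F3) gives $F(x,0)\geq c_2\Phi(0)=0$, while (F4) at $u=0$ gives $2F(x,0)\leq\langle f(x,0),0\rangle=0$.

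\textbf{Verifying the routine axioms.} (I1) is Proposition~\ref{classC1} plus the sign condition from (F3). (I5) uses that strict convexity (F2) makes $\psi\mapsto\cI(v_0,w_0+\psi)$ strictly convex with a critical point at $\psi=0$ for $(v_0,w_0)\in\cM$, hence a strict minimum. (I6) follows from the pointwise bound $\Phi(s)\leq Cs^6$ coming from (N2): with (F3) and $\V\hookrightarrow L^6$ via Sobolev, $\cI(v,0)\leq C'\|v\|_\D^6$, so $\J(v,0)\geq\tfrac12 r^2-C'r^6>0$ for small $r$. (I8) is precisely Proposition~\ref{Propuv_N} evaluated on $\cN\subset\cM$, where $\cI'(v_0,w_0)|_\W=0$ collapses the cross-term $t\cI'(u)[(0,\psi)]$. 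For (I2), $\cT$-convergence gives $v_n\to v$ strongly in $\V$ and $w_n\weakto w$ weakly in $\W$, whence $v_n+w_n\weakto v+w$ in $L^\Phi$; convexity and continuity of $u\mapsto\int F(x,u)\,dx$ yield the required weak lower semicontinuity. Under the additional assumption $I(v_n+w_n)\to I(v+w)$, Lemma~\ref{LemConvWeakIpliesStrong} upgrades this to strong convergence of $v_n+w_n$ in $L^\Phi$; subtracting $v_n\to v$ gives $w_n\to w$, establishing (I3). For (I4), if $\|v_n\|_\D$ is bounded and $\|(v_n,w_n)\|\to\infty$, then $|w_n|_\Phi\to\infty$; using $\V\hookrightarrow L^\Phi$ we get $|v_n+w_n|_\Phi\to\infty$, hence $\int\Phi(|v_n+w_n|)\,dx\to\infty$ by Lemma~\ref{AllProp}(iv), and (F3) yields $\cI(v_n,w_n)\to\infty$.

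\textbf{The main obstacle (I7).} Here the $\tX$-component $w_n$ is not controlled by $u_n^+=v_n$, ruling out a direct pointwise argument. From (F4), a short computation shows that $t\mapsto F(x,tu)/t^2$ is nondecreasing on $(0,\infty)$, since its derivative equals $t^{-3}(\langle f(x,tu),tu\rangle-2F(x,tu))\geq 0$. Given $v_n\to v\neq 0$ in $\V$ and $t_n\to\infty$, I split into two cases. If $|w_n|_\Phi$ stays bounded, pass to a subsequence with $w_n\weakto w_\infty$ in $\W$ (reflexivity); then $u_\infty:=v+w_\infty\neq 0$, because $v+w_\infty=0$ would force $v\in\V\cap\W=\{0\}$ by Lemma~\ref{defof}. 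For each fixed $T\geq 1$, weak lower semicontinuity of the convex, continuous functional $u\mapsto\int F(x,u)\,dx$ gives $\liminf_n\int F(x,Tu_n)\,dx\geq\int F(x,Tu_\infty)\,dx$. On the positive-measure set $\{u_\infty\neq 0\}$, (F3) and (N3) force $F(x,Tu_\infty)/T^2\geq c_2\Phi(T|u_\infty|)/T^2\to\infty$ pointwise, so Fatou produces $\int F(x,Tu_\infty)/T^2\,dx\to\infty$ as $T\to\infty$. Given $M>0$, pick $T$ with $\int F(x,Tu_\infty)/T^2\,dx\geq 2M$; then $\int F(x,Tu_n)/T^2\,dx\geq M$ for $n$ large, and the monotonicity $F(x,t_nu_n)/t_n^2\geq F(x,Tu_n)/T^2$ (valid once $t_n\geq T$) closes this case. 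If instead $|w_n|_\Phi\to\infty$ on a subsequence, then $|v_n+w_n|_\Phi\to\infty$; Lemma~\ref{AllProp}(iv) gives $\int\Phi(|v_n+w_n|)\,dx\to\infty$, and the monotonicity at $T=1$ together with (F3) yields $\cI(t_nu_n)/t_n^2\geq c_2\int\Phi(|v_n+w_n|)\,dx\to\infty$.

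\textbf{Conclusion.} With (I1)--(I8) in place, Theorem~\ref{ThLink1}(a) produces a Cerami sequence $(u_n)\subset\V$ for $\tJ$ at the mountain-pass level $c_\cM\geq a>0$. Setting $(v_n,w_n):=m(u_n)\in\cM$, the identity $\tJ'(u_n)=\J'(m(u_n))|_\V$ combined with the $\cM$-condition $\J'(m(u_n))|_\W=0$ (and the resulting equality $\|\J'(m(u_n))\|_{(\V\times\W)^*}=\|\tJ'(u_n)\|_{\V^*}$) transports the Cerami property to $\J$ at the same level. Part (b) of Theorem~\ref{ThLink1} identifies $c_\cM$ with $c_\cN$, producing the announced Cerami sequence at $c_\cN>0$.
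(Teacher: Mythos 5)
Your proposal is correct and follows the same skeleton as the paper (verify (I1)--(I8) for $X=\V\times\W$, $X^+=\V\times\{0\}$, $\tX=\{0\}\times\W$, then invoke Theorem \ref{ThLink1}), and your treatment of (I1)--(I6) and (I8) matches the paper's. The one place where you genuinely diverge is (I7), the only condition besides (I8) requiring real work. The paper argues pointwise: it reduces to the integral $\int\Phi(t_n|v_n+w_n|)/(t_n^2|v_n+w_n|^2)\,|v_n+w_n|^2\,dx$, shows via (N3) that it suffices to rule out $v_n+w_n\to 0$ in measure, and excludes that case by extracting a.e.\ convergence $w_n\to -v$, concluding that $v$ would then be curl-free, hence a gradient of a harmonic function, hence $0$ by \cite[Lemma 1.1(i)]{Leinfelder} -- a contradiction. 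You instead exploit the variational structure: weak sequential lower semicontinuity of the convex functional $I$ on $L^\Phi$ reduces everything to the fixed nonzero limit $u_\infty=v+w_\infty$ (nonzero by $\V\cap\W=\{0\}$ from Lemma \ref{defof}), and the monotonicity of $t\mapsto F(x,tu)/t^2$, which follows from (F4), transfers the divergence from a fixed dilation parameter $T$ to $t_n$. Both routes ultimately rest on the Helmholtz decomposition being direct; yours trades the measure-theoretic/harmonicity argument for convexity plus (F4) (which the paper's (I7) proof does not use), and is arguably cleaner since it avoids re-running the harmonicity argument of Lemma \ref{defof}. Your dichotomy on $|w_n|_\Phi$ correctly covers the case where the $\tX$-component escapes to infinity, which the paper handles through the $L^2(B(0,R))$-unboundedness alternative.

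One small caveat in your concluding step: the equality $\|\J'(m(u_n))\|_{(\V\times\W)^*}=\|\wti\J'(u_n)\|_{\V^*}$ transports the Palais--Smale property, but for the Cerami property as stated in the Proposition the weight is $1+\|m(u_n)\|$ rather than $1+\|u_n\|$, and $\|w(u_n)\|$ is only controlled by $\|u_n\|$ through Lemma \ref{LemDefofW}(b) once the sequence is known to be bounded. The paper's remark preceding Theorem \ref{ThLink1} makes exactly this boundedness proviso, and boundedness is supplied a posteriori by Corollary \ref{eq:corCeramibounded}; your write-up shares this (harmless) imprecision with the paper's.
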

\begin{proof}
	Setting $X:=\V\times\W$,
	$X^+:=\V\times\{0\}$ and $\tX:=\{0\}\times\V$ we check assumptions (I1)--(I8) for the functional $\J:X\to\R$ 
	given by
	$$\J(v,w)=\frac{1}{2}\|v\|^2_\D-\cI(v,w)$$
	(cf. \eqref{eqJ} and \eqref{DefOfXi}). Recall 
	\[
	\|(v,w)\|:=\bigl(\|v\|_{\cD}^2+|w|_{\Phi}^2\bigr)^{\frac{1}{2}}, \quad \text{where }  \|v\|_{\cD} = |\nabla v|_2.
	\]
	Convexity and differentiability of $\cI$, (F3) and Lemma \ref{LemConvWeakIpliesStrong} yield:
	\begin{itemize}
		\item[(I1)] $\cI|_{\V\times\W}\in \cC^1(\V\times\W,\R)$ and $\cI(v,w)\geq \cI(0,0)=0$ for any $(v,w)\in \V\times\W$.
		\item[(I2)] If $v_n\to v$ in $\V$, $w_n\weakto w$ in $\W$,  then $\displaystyle\liminf_{n\to\infty}\cI(v_n,w_n)\geq \cI(v,w)$.
		\item[(I3)] If $v_n\to v$ in $\V$, $w_n\weakto w$ in $\W$ and $\cI(v_n,w_n)\to \cI(v,w)$, then $(v_n,w_n)\to (u,w)$.
	\end{itemize}
	Moreover,
	\begin{itemize}
		\item[(I6)] There exists $r>0$ such that $\inf_{\|v\|_\D=r}\J(v,0)>0$.
	\end{itemize}
	Indeed, by (F3) and (N2) there exist $C$, $C'>0$ (cf. proof of Lemma \ref{Emb}) such that for any $v\in\V$
	$$\J(v,0)=\|v\|_\D^2 -\int_{\R^3}F(x,v)\,dx
	\geq\|v\|_\D^2 -C\int_{\R^3}|v|^6\,dx
	\geq\|v\|_\D^2 -C'\|v\|_\D^6$$
	and thus (I6) is satisfied.
	It is easy to verify using (F3) and (iv) of Lemma \ref{AllProp} that
	\begin{itemize}
		\item[(I4)] $\|v\|_\D+\cI(v,w)\to\infty$ as $\|(v,w)\|\to\infty$.
	\end{itemize}
	Hence also
	\begin{itemize}
	\item[(I5)] If $(v,w)\in\cM$, then $\cI(v,w)<\cI(v,w+\psi)$ for any $\psi\in \cW\setminus\{0\}$
	\end{itemize}
	holds by strict convexity of $F$.
	Next we  prove 
	\begin{itemize}
		\item[(I7)] $\cI(t_n(v_n,w_n))/t_n^2\to\infty$ if $t_n\to\infty$ and $v_n\to v$ for some $v\neq 0$ as $n\to\infty$.
	\end{itemize}
	Observe that by (F3)
	\begin{eqnarray} \label{eq:I7check}
		\cI(t_n(v_n,w_n))/t_n^2&=&
		\int_{\R^3}F(x,t_n(v_n+w_n))/t_n^2\,dx\\ \nonumber
		&\geq& c_2
		\int_{\R^3}\Phi(t_n|v_n+w_n|)/t_n^2\,dx\\ 
		&=& c_2
		\int_{\R^3}\frac{\Phi(t_n|v_n+w_n|)}{t_n^2|v_n+w_n|^2}|v_n+w_n|^2\,dx. \nonumber
	\end{eqnarray} 
	Take $R_0>0$ such that $v\neq 0$ in $L^2(B(0,R_0))$.
	In view of (N3) we find $C>0$ such that
	$$C\Phi(t)\geq t^2\quad\hbox{for }t\geq 1.$$
	Then 
	\begin{equation} \label{estim}
	\int_{B(0,R)}|v_n+w_n|^2\,dx\leq C\int_{\R^3}\Phi(t_n|v_n+w_n|)/t_n^2\,dx+\int_{B(0,R)\cap\{|v_n+w_n|\leq 1\}}|v_n+w_n|^2\,dx
	\end{equation}
	and $\cI(t_n(v_n,w_n))/t_n^2\to\infty$ provided $v_n+w_n
	$ is unbounded in $L^2(B(0,R),\R^3)$ for some $R\geq R_0$. Now, suppose that $v_n+w_n$ is bounded in $L^2(B(0,R),\R^3)$ for any $R\geq R_0$.  
We may assume passing to a subsequence that $v_n\to v$ a.e. and $w_n\weakto w$ in $L^2_{loc}(\R^3,\R^3)$ for some $w$. Given $\eps>0$, let 
\begin{equation}\label{aetoinfI7}
	\Omega_n := \{x\in\R^3: |v_n(x)+w_n(x)|\ge\eps\}.
	\end{equation}
We claim that there exists $\eps>0$ such that $\lim_{n\to\infty}|\Omega_n|>0$, possibly after passing to a subsequence. 
Arguing indirectly, suppose this limit is 0 for each $\eps$. Then $v_n+w_n\to 0$ in measure, so up to a subsequence $v_n+w_n\to 0$ a.e., hence $w_n\to -v$ a.e. and  $w_n\rh -v$ in $L^2_{loc}(\R^3,\R^3)$. Since $\curlop w_n=0$ in the distributional sense, the same is true of $v$. Thus there is $\xi\in H^1_{loc}(\R^3)$ such that $v=\nabla \xi$, see \cite[Lemma 1.1(i)]{Leinfelder}. As $\div(\nabla \xi)=\div v =0$, it follows that $\xi$, and therefore $v$, is harmonic. Recalling that $v\in\cD$, we obtain $v=0$ as in the proof of Lemma \ref{defof}. This is a contradiction.
Taking $\eps$ in \eqref{aetoinfI7} such that $\lim_{n\to\infty}|\Omega_n|>0$, we obtain
\[
\int_{\R^3}\frac{\Phi(t_n|v_n+w_n|)}{t_n^2|v_n+w_n|^2}|v_n+w_n|^2\,dx \ge \int_{\Om_n}\frac{\Phi(t_n|v_n+w_n|)}{t_n^2|v_n+w_n|^2}|v_n+w_n|^2\,dx \to \infty.
\]
\indent	Finally, Proposition \ref{Propuv_N} shows that
	\begin{itemize}
		\item[(I8)]
		$\frac{t^2-1}{2}\cI'(v,w)[(v,w)]+t\cI'(v,w)[(0,\psi)]+\cI(v,w)-\cI(tv,tw+\psi)\leq 0$
		for any $t\geq 0$, $v\in\V$ and $w,\psi\in\W$.
	\end{itemize}
Applying Theorem \ref{ThLink1} we obtain the last conclusion.
\end{proof}

Since there is no compact embedding of $\V$ into $L^{\Phi}$ we cannot expect that the Palais-Smale or Cerami condition is satisfied. 
We need the following variant of Lions' lemma.

\begin{Lem}\label{lem:Conv}
	Suppose that   $(v_n)\subset \D$ is bounded and for some $r>\sqrt{3}$
	\begin{equation}\label{eq:LionsCond11}
	\sup_{y\in \Z^3}\int_{B(y,r)}|v_n|^2\,dx\to 0 \quad\hbox{as } n\to\infty.
	\end{equation}
	Then  
	$$\int_{\R^3} \Phi(|v_n|)\, dx\to 0\quad\hbox{as } n\to\infty.$$
\end{Lem}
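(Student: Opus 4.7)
The plan is to exploit the two-sided subcriticality of $\Phi$ from (N2) to reduce the statement to a measure-convergence fact, which will then follow from the classical Lions vanishing lemma applied to a truncation of $|v_n|$. Given $\eps>0$, by (N2) I first choose $0<\delta<M$ with $\Phi(t)\le\eps t^6$ for $t\in [0,\delta]\cup[M,+\infty)$; on $[\delta,M]$, continuity of $\Phi$ gives a bound $A_\eps$. Since $\cD\hookrightarrow L^6(\R^3,\R^3)$ by Sobolev, $(v_n)$ is bounded in $L^6$, and the splitting yields
\[
\int_{\R^3}\Phi(|v_n|)\,dx\le 2\eps\int_{\R^3}|v_n|^6\,dx+A_\eps\,|\{|v_n|>\delta\}|\le 2\eps C_0+A_\eps\,|\{|v_n|>\delta\}|.
\]
Hence it suffices to prove that $|\{|v_n|>\delta\}|\to 0$ as $n\to\infty$ for every $\delta>0$.

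To establish this, I would introduce the scalar truncation $w_n:=(|v_n|-\delta/2)_+$, noting $\{|v_n|>\delta\}\subset\{w_n\ge\delta/2\}$ and $w_n\le|v_n|$ vanishes outside $\{|v_n|>\delta/2\}$. The key observation is that on $\{|v_n|>\delta/2\}$ one has $|v_n|^2\le (2/\delta)^4|v_n|^6$, whence
\[
\int_{\R^3}w_n^2\,dx\le\int_{\{|v_n|>\delta/2\}}|v_n|^2\,dx\le (2/\delta)^4\int_{\R^3}|v_n|^6\,dx,
\]
which is bounded uniformly in $n$; combined with $\|w_n\|_{L^6}\le\|v_n\|_{L^6}\le C$, this gives that $(w_n)$ is bounded in $L^q(\R^3)$ for every $q\in[2,6]$. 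Since $w_n\le|v_n|$, the sup-over-balls hypothesis transfers to $(w_n)$. Applying the classical Lions lemma in its $L^p$--$L^q$ form (with $p=2$, $q=6$) yields $w_n\to 0$ in $L^s(\R^3)$ for every $s\in(2,6)$, so by Chebyshev, e.g.\ with $s=3$,
\[
|\{|v_n|>\delta\}|\le|\{w_n\ge\delta/2\}|\le (2/\delta)^3\int_{\R^3}w_n^3\,dx\longrightarrow 0.
\]

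Plugging this back into the first estimate and letting $\eps\to 0$ concludes the proof. The main obstacle is precisely producing the vanishing of $|\{|v_n|>\delta\}|$: a naive attempt to show $\|v_n\|_{L^s}\to 0$ directly for some $s\in(2,6)$ is doomed because $\cD\not\subset L^2(\R^3)$, and indeed the sequence $v_n(x)=n^{-1/2}\varphi(x/n)$ is bounded in $\cD$, satisfies the hypothesis, yet has $\|v_n\|_{L^3}^3\to\infty$. The truncation trick restricts attention to the set $\{|v_n|>\delta/2\}$, whose measure is uniformly bounded by $(2/\delta)^6\|v_n\|_{L^6}^6$ thanks to the $L^6$ bound; on that set the $L^2$-integrability of $w_n$ is recovered, and the classical $L^p$--$L^q$ Lions lemma becomes applicable. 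The subcriticality of $\Phi$ at both $0$ and $\infty$ encoded in (N2) is exactly what makes the initial splitting absorb the small- and large-values contributions into an arbitrarily small multiple of $\int|v_n|^6\,dx$.
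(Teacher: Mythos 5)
Your overall strategy is sound and is genuinely different from the paper's, which disposes of this lemma in one line by citing \cite[Lemma 1.5]{MederskiZeroMass}; in effect you are reproving that external result. The reduction via (N2) --- absorbing the contributions of $\{|v_n|\le\delta\}$ and $\{|v_n|\ge M\}$ into $\eps\int_{\R^3}|v_n|^6\,dx$ and thereby reducing everything to $|\{|v_n|>\delta\}|\to0$ --- is correct, and the truncation $w_n=(|v_n|-\delta/2)_+$ is the right object to which a Lions-type vanishing argument can be applied (your remark that $v_n$ itself need not converge in any $L^s$ with $s<6$ is also correct and shows the truncation is genuinely needed).

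However, the step where you invoke ``the classical Lions lemma in its $L^p$--$L^q$ form'' contains a gap as written: the hypotheses you actually verify for $(w_n)$ are boundedness in $L^2\cap L^6$ together with $\sup_{y}\int_{B(y,r)}w_n^2\,dx\to0$, and these alone do \emph{not} imply $w_n\to0$ in $L^s(\R^3)$ for $2<s<6$. For instance, if $w_n$ is the characteristic function of a union of $n$ disjoint sets of total measure $1$ placed far apart, then $(w_n)$ is bounded in every $L^q$ and $\sup_y\int_{B(y,r)}w_n^2\,dx\le Cn^{-1}\to0$, yet $\int_{\R^3}w_n^s\,dx=1$ for all $s$. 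Every version of the vanishing lemma requires in addition a bound on $\nabla w_n$ in some $L^p$: the proof sums the interpolation/Sobolev inequality $\|u\|_{L^s(B)}^s\le C\|u\|_{L^2(B)}^{(1-\theta)s}\|u\|_{H^1(B)}^{\theta s}$ with $\theta s=2$ over a bounded-overlap cover, and it is the square-summability of the local $H^1$-norms that makes the sum small. Fortunately your $w_n$ does satisfy this: $w_n=\eta(|v_n|)$ with $\eta(t)=(t-\delta/2)_+$ a $1$-Lipschitz function, and $\bigl|\nabla|v_n|\bigr|\le|\nabla v_n|$ a.e., so $|\nabla w_n|\le|\nabla v_n|$ a.e. and $(w_n)$ is bounded in $H^1(\R^3)$. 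With this one additional observation the standard Lions lemma (e.g. \cite[Lemma 1.21]{Willem} with $N=3$, $2^*=6$) applies and the rest of your argument goes through. A minor further point: the hypothesis gives the supremum over $y\in\Z^3$ only, but since $r>\sqrt3$ the balls $B(y,r)$, $y\in\Z^3$, cover $\R^3$ with bounded overlap, so this controls the supremum over arbitrary centers for a slightly smaller radius, which is all the lemma needs.
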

\begin{proof}
This follows from \cite[Lemma 1.5]{MederskiZeroMass} since $\Phi$ satisfies (N2).
\end{proof}

We collect  further properties of $\cI$.
\begin{Lem}\label{LemDefofW}$\hbox{}$\\
	$(a)$ For any $v\in L^{\Phi}$ there is a unique $w(v)\in \cW$ such that
	\begin{equation}\label{DefofW(u)}
	\cI(v,w(v))=\inf_{w\in\W}\cI(v,w).
	\end{equation}
	Moreover, $w:L^{\Phi}\to\W$  is continuous.\\
	$(b)$ $w$ maps bounded sets into bounded sets and $w(0)=0$.
\end{Lem}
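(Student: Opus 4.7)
The plan is, for fixed $v \in L^\Phi$, to minimize the strictly convex functional $w \mapsto \cI(v,w) = I(v+w)$ over $\cW$, and then upgrade the natural weak continuity of the selection $v \mapsto w(v)$ to norm continuity by invoking the Brezis--Lieb-type Lemma \ref{LemConvWeakIpliesStrong}. The key technical input is a coercivity estimate coming from (N1): since $\Phi$ is convex and satisfies $\Delta_2$ with constant $K$ (Lemma \ref{AllProp}(i)), one has $\Phi(a+b) \le \tfrac{K}{2}(\Phi(a)+\Phi(b))$ for all $a,b\ge 0$. Applying this to $|w|\le |v+w|+|v|$ and invoking (F3) gives
$$\cI(v,w) \;\ge\; c_2\int_{\R^3}\Phi(|v+w|)\,dx \;\ge\; \frac{2c_2}{K}\int_{\R^3}\Phi(|w|)\,dx \,-\, c_2\int_{\R^3}\Phi(|v|)\,dx,$$
so by Lemma \ref{AllProp}(iv) the functional $\cI(v,\cdot)$ is coercive on $\cW$.

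For part (a) (existence and uniqueness), note that $\cW$ is a closed (hence weakly closed) subspace of the reflexive Banach space $L^\Phi$ (reflexivity from $\Phi,\Psi\in\Delta_2$). The functional $\cI(v,\cdot)$ is strictly convex by (F2) and continuous on $L^\Phi$ in view of Proposition \ref{classC1} (whose proof actually establishes $I\in\cC^1(L^\Phi,\R)$), hence weakly lower semicontinuous. A minimizing sequence is bounded by coercivity and admits a weakly convergent subsequence in $\cW$ whose weak limit is the desired minimizer $w(v)$; strict convexity gives uniqueness.

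For the continuity of $w\colon L^\Phi\to\cW$, let $v_n\to v$ in $L^\Phi$. From $\cI(v_n,w(v_n))\le \cI(v_n,0) = I(v_n)\to I(v)$ and the coercivity estimate I conclude that $(w(v_n))$ is bounded in $\cW$. Passing to a weakly convergent subsequence $w(v_n)\weakto w_0\in\cW$, I get $v_n+w(v_n)\weakto v+w_0$ in $L^\Phi$, and weak lower semicontinuity together with the minimizing property yields
$$\cI(v,w_0)\;\le\; \liminf_n \cI(v_n,w(v_n))\;\le\; \liminf_n \cI(v_n,w(v))\;=\;\cI(v,w(v)),$$
where the final equality uses strong continuity of $I$. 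Uniqueness forces $w_0 = w(v)$, and the above chain also shows $I(v_n+w(v_n))\to I(v+w(v))$. Lemma \ref{LemConvWeakIpliesStrong} then upgrades the weak convergence of $(v_n+w(v_n))$ to strong convergence in $L^\Phi$, so $w(v_n)\to w(v)$ in $\cW$; a routine subsequence argument propagates this to the whole sequence.

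Part (b) is then immediate: the coercivity estimate combined with the bound $\cI(v,w(v))\le I(v)$ converts $L^\Phi$-boundedness of $v$ into $L^\Phi$-boundedness of $w(v)$, while $\cI(0,0)=0$ together with $F\ge 0$ from (F3) and the uniqueness established in (a) forces $w(0)=0$. The main obstacle I foresee is the strong-convergence upgrade in the continuity argument, which cannot be deduced from weak compactness alone and crucially depends on Lemma \ref{LemConvWeakIpliesStrong}; everything else reduces to the interplay between the $\Delta_2$-inequality for $\Phi$ and strict convexity of $F$.
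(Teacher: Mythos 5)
Your proposal is correct and follows essentially the same route as the paper's proof: existence and uniqueness from strict convexity, continuity and coercivity of $\cI(v,\cdot)$ on $\cW$, then the chain of inequalities combining weak lower semicontinuity with minimality to identify the weak limit $w_0=w(v)$, and finally Lemma \ref{LemConvWeakIpliesStrong} to upgrade to norm convergence. The only difference is that you spell out the coercivity estimate via the $\Delta_2$-inequality, which the paper merely asserts.
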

\begin{proof}
	$(a)$ Let $v\in L^{\Phi}$. 
	Since  $\W\ni w\mapsto \cI(v,w)\in\R$ is continuous, strictly convex and coercive,  there exists a unique $w(v)\in\W$ such that (\ref{DefofW(u)}) holds.
	We show that the map $w:L^{\Phi}\to\W$ is continuous. Let $v_n\to v$ in $L^{\Phi}$. Since 
	\begin{equation}\label{Lem_m_map_ineq}
	0\leq \cI(v_n,w(v_n))\leq \cI(v_n,0),
	\end{equation}
	$w(v_n)$ is bounded and we may assume  $w(v_n)\rightharpoonup w_0$ for some $w_0\in\W$.
	Observe that by the (sequential) lower semi-continuity of $\cI$ we get
	$$\cI(v,w(v))\leq \cI(v,w_0)\leq \liminf_{n\to\infty} 
	\cI(v_n,w(v_n))\leq 
	\liminf_{n\to\infty} 
	\cI(v_n,w(v))=\cI(v,w(v)).$$
	Hence $w(v)=w_0$ and by Lemma \ref{LemConvWeakIpliesStrong} we have $v_n+w(v_n)\to v+w(v)$ in $L^{\Phi}$. Thus $w(v_n)\to w(v)$ in $\W$.\\ 
	$(b)$ This follows from inequality (\ref{Lem_m_map_ineq}), (F3) and Lemma \ref{AllProp} (iv).
\end{proof}

Let $m(v):=(v,w(v))\in\cM$ for $v\in\cV$. Then in view of Lemma \ref{LemDefofW} (a), $m:\cV\to\cM$ is continuous.
The following lemma implies that any Cerami sequence of $\J$ in $\cM$ and any Cerami sequence of $\cJ\circ m$
are bounded.

\begin{Lem}\label{LemCoercive} If
$(v_n)\subset \cV$ is such that $(\J\circ m)(v_n)\leq\beta$ and $(1+\|v_n\|)(\J\circ m)'(v_n)\to 0$ as $n\to\infty$, then $(v_n)$ is bounded.
\end{Lem}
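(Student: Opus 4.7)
The plan is to argue by contradiction: assume $\|v_n\|_\cD\to\infty$, normalize by $\tilde v_n:=v_n/\|v_n\|_\cD$, and derive a contradiction in each branch of Lions' vanishing/non-vanishing dichotomy applied to $(\tilde v_n)\subset\cD$. Throughout I would write $w_n:=w(v_n)$ and $E_n:=v_n+w_n$, so that $(v_n,w_n)=m(v_n)\in\cM$.

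First I would extract the basic energy identities. The Cerami condition gives $\tJ'(v_n)[v_n]=\|v_n\|_\cD^2-\int_{\R^3}\langle f(x,E_n),v_n\rangle\,dx=o(1)$, and combined with $\int_{\R^3}\langle f(x,E_n),\psi\rangle\,dx=0$ for every $\psi\in\cW$ (membership in $\cM$) together with (F4), this pins down
$$
\tfrac{1}{2}\|v_n\|_\cD^2-\beta\le\int_{\R^3}F(x,E_n)\,dx\le\tfrac{1}{2}\|v_n\|_\cD^2+o(1).
$$
Next, applying Proposition \ref{Propuv_N} to $(v_n,w_n)$ with $t=T/\|v_n\|_\cD\to 0$ and $\psi=w(T\tilde v_n)-tw_n\in\cW$, and noting that the $\cW$-derivative terms vanish on $\cM$, I would obtain the uniform upper bound $\tJ(T\tilde v_n)\le\beta+o(1)$ for each fixed $T>0$. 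This bound will settle the vanishing case.

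In the vanishing branch, Lemma \ref{lem:Conv} gives $\int_{\R^3}\Phi(T|\tilde v_n|)\,dx\to 0$; combined with the elementary upper bound $F(x,u)\le c_1\Phi(|u|)$ (which follows from (F3) by integration) and the fact that $w(T\tilde v_n)$ minimizes $\cI(T\tilde v_n,\cdot)$ on $\cW$, this yields $\tJ(T\tilde v_n)\ge T^2/2-o(1)$, so that choosing $T>\sqrt{2\beta}$ contradicts the upper bound. In the non-vanishing branch I would translate $v_n$ by an appropriate $y_n\in\Z^3$; by the $\Z^3$-periodicity of $f$ and the resulting translation-equivariance of $w$, the shifted sequence $(\bar v_n,\bar w_n)$ satisfies the same bounds, while the normalization $\tilde{\bar v}_n:=\bar v_n/\|\bar v_n\|_\cD$ has a nonzero weak limit $\tilde{\bar v}\in\cD$ (via the compact embedding $\cD\hookrightarrow L^2_{loc}$). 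Using (N3) in the form $\Phi(s)\ge K s^2$ for $s\ge R_K$ with $K$ arbitrarily large, and splitting $\int_{B(0,R_0)}|\bar E_n|^2\,dx$ at the threshold $R_K$, I would establish that $(\bar v_n+\bar w_n)/\|\bar v_n\|_\cD\to 0$ in $L^2_{loc}$. This forces $\bar w_n/\|\bar v_n\|_\cD\to-\tilde{\bar v}$ a.e.\ and weakly in $L^2_{loc}$; since the left-hand side is curl-free in the distributional sense, so is $\tilde{\bar v}$, and combined with $\div\tilde{\bar v}=0$ this makes $\tilde{\bar v}$ harmonic in $\cD$, hence zero as in the proof of Lemma \ref{defof}, contradicting $\tilde{\bar v}\neq 0$.

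The hard part will be the non-vanishing branch: since (I7) is stated for \emph{strong} convergence of the $X^+$-component, it cannot be invoked verbatim here. The role of the superquadratic growth (N3) is precisely to let one discard the low-$|\bar E_n|$ portion of $\int F(x,\bar E_n)\,dx$, push the local $L^2$-mass of $\bar E_n/\|\bar v_n\|_\cD$ to zero, and so reduce everything to the familiar ``harmonic-in-$\cD$ is zero'' argument already used in the proof of Lemma \ref{defof}.
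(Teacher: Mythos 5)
Your proof is correct and follows the same overall strategy as the paper's: argue by contradiction, normalize, invoke the dichotomy furnished by Lemma \ref{lem:Conv}, use Proposition \ref{Propuv_N} to kill the vanishing case, and use the ``curl-free $+$ divergence-free $+$ belongs to $\cD$ implies zero'' argument to kill the non-vanishing case. The vanishing branch is essentially identical: the paper takes $\psi=-t_nw_n$ in Proposition \ref{Propuv_N} so as to land directly on $\J(s\bar v_n,0)$, while your choice $\psi=w(T\tilde v_n)-tw_n$ lands on $\tJ(T\tilde v_n)$, which you then bound below by $\J(T\tilde v_n,0)$ via minimality of $w(\cdot)$ --- the same estimate. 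The only genuine divergence is in how the non-vanishing branch closes. The paper bounds $\J(v_n,w_n)$ below using (F4) and the Cerami condition and then, as in the proof of (I7), uses (N3) to show that $c_2\int_{\R^3}\Phi(|v_n+w_n|)\,dx/\|v_n\|_\cD^2\to\infty$ unless $(v_n+w_n)/\|v_n\|_\cD\to0$ in measure, so that $\J(v_n,w_n)/\|v_n\|_\cD^2\to-\infty$, a contradiction. You instead extract from (F4), the Cerami condition and (F3) the complementary a priori bound $\int_{\R^3}\Phi(|v_n+w_n|)\,dx\le C\|v_n\|_\cD^2+o(1)$, and then use (N3) through the truncation $\Phi(s)\ge Ks^2$ for $s\ge R_K$ to conclude outright that $(v_n+w_n)/\|v_n\|_\cD\to0$ in $L^2_{loc}$; both routes then feed into the same Leinfelder/harmonicity argument. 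Your version avoids the secondary dichotomy inside the non-vanishing case and is marginally more direct, at the cost of making the upper energy bound $\int_{\R^3}F(x,v_n+w_n)\,dx\le\tfrac12\|v_n\|_\cD^2+o(1)$ explicit; the essential ingredients ((F4), (N3), the Cerami condition, Lemma \ref{lem:Conv}) are the same in both arguments.
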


\begin{proof}
	Suppose that $m(v_n)=(v_n,w_n)\in\mathcal{M}$, $\|(v_n,w_n)\|\to\infty$ as $n\to\infty$ and  $\J(v_n,w_n)\le \beta$. Since $w_n=w(v_n)$, $\|(v_n,w_n)\|\to\infty$ if and only if $\|v_n\|_\D\to\infty$.
Let $\bar{v}_n:=v_n/\|v_n\|_\D$ and $\bar{w}_n:=w_n/\|v_n\|_\D$. 
Assume 
$$\lim_{n\to\infty}\sup_{y\in\Z^3}\int_{B(y,r)}|\bar{v}_n|^2\,dx=0$$
for some fixed $r>\sqrt{3}$. 
By Lemma \ref{lem:Conv}, $\lim_{n\to\infty}\int_{\R^3}\Phi(|\bar{v}_n|)\,dx=0$,  and arguing similarly as Liu \cite{Liu}, we obtain a contradiction. More precisely, recalling $\cJ'(v_n,w_n)[w_n]=0$,
Proposition \ref{Propuv_N} with $t_n=s/\|v_n\|_\D$ and $\psi_n=-t_nw_n$ implies that for every $s>0$,
\begin{eqnarray*}
\beta &\ge&\limsup_{n\to\infty}\J(v_n,w_n) \\ &\ge&\limsup_{n\to\infty}\J(s\bar{v}_n,0)-\lim_{n\to\infty}\J'(v_n,w_n)\Big[\Big(\frac{t_n^2-1}{2}v_n,-\frac{t_n^2+1}{2}w_n\Big)\Big]=\limsup_{n\to\infty}\J(s\bar{v}_n,0)\\
&\overset{(F3)}{\ge}&\frac{s^2}{2}-\lim_{n\to\infty}c_1\int_{\R^3}\Phi(s|\bar v_n|)\,dx
 =\frac{s^2}{2}
\end{eqnarray*}
which is impossible. Hence $\liminf_{n\to\infty}\int_{B(y_n,r)}|\bar{v}_n|^2\,dx>0$ for some sequence $(y_n)\subset\Z^3$.
Since $\cM$ and $\J$ are invariant with respect to $\mathbb{Z}^3$-translations, we may assume that
$$\int_{B(0,r)}|\bar{v}_n|^2\,dx\geq c>0$$
for all $n$ sufficiently large and some constant $c$. This implies that up to a subsequence, $\bar{v}_n\rightharpoonup\bar v\ne 0$ in $\D$, $\bar{v}_n\to\bar{v}$ in $L^2_{loc}(\R^3,\R^3)$ and $\bar{v}_n\to\bar v$ a.e. in $\R^3$ for some $\bar v\in\cD$.
By (F4),
\[
2\J(v_n,w_n) - \J'(v_n,w_n)[(v_n,w_n)] = \int_{\R^3}(\langle f(x,v_n+w_n), v_n+w_n\rangle - 2F(x,v_n+w_n))\,dx \ge 0,
\]
so $\cJ(v_n,w_n)$ is bounded below and
$$\alpha \le \frac{\J(v_n,w_n)}{\|v_n\|_\D^2}\le \frac{1}{2}\|\bar{v}_n\|_\D^2-c_2\int_{\R^3}\frac{\Phi(v_n+w_n)}{|v_n+w_n|^2}|\bar{v}_n+\bar{w}_n|^2\,dx$$
for some constant $\alpha$ (cf. \eqref{eq:I7check} for the second inequality). Hence it suffices to show that the integral on the right-hand side above goes to $+\infty$. We can argue as in the proof of (I7) in Proposition \ref{PropDefOfm(u)}. In particular, \eqref{estim} holds with $v_n+w_n$ replaced by $\bar v_n+\bar w_n$ and $t_n$ replaced by $\|v_n\|_\D$, and if $\Om_n$ is as in \eqref{aetoinfI7} (again, with $v_n+w_n$ replaced by $\bar v_n+\bar w_n$), then $\lim_{n\to\infty}|\Om_n|>0$ for a subsequence.
\end{proof}

\begin{Cor}\label{eq:corCeramibounded}
	Let $\beta>0$. There exists $M_\beta>0$ such that for every $(v_n)\subset\V$ satisfying $0\le\liminf_{n\to\infty}\J\bigl(m(v_n)\bigr)\le\limsup_{n\to\infty}\J\bigl(m(v_n)\bigr)\le\beta$ and $\lim_{n\to\infty}(1+\|v_n\|)\J'\bigl(m(v_n)\bigr)=0$ there holds $\limsup_{n\to\infty}\|v_n\|\le M_\beta$.
\end{Cor}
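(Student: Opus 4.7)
The plan is to argue by contradiction and reduce the uniform bound to the (non-uniform) boundedness already proved in Lemma \ref{LemCoercive}, via a straightforward diagonal extraction.

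Suppose the conclusion fails. Then for every positive integer $k$ there exists a sequence $(v_n^k)_{n\in\N}\subset\V$ such that
\[
0\le \liminf_{n\to\infty}\J\bigl(m(v_n^k)\bigr)\le \limsup_{n\to\infty}\J\bigl(m(v_n^k)\bigr)\le\beta, \quad (1+\|v_n^k\|)\J'\bigl(m(v_n^k)\bigr)\to 0
\]
as $n\to\infty$, while $\limsup_{n\to\infty}\|v_n^k\|>k$. For each $k$, these conditions allow us to pick an index $n_k$ so that simultaneously
\[
\|v_{n_k}^k\|>k, \quad -\tfrac{1}{k}\le \J\bigl(m(v_{n_k}^k)\bigr)\le \beta+\tfrac{1}{k}, \quad (1+\|v_{n_k}^k\|)\bigl\|\J'\bigl(m(v_{n_k}^k)\bigr)\bigr\|<\tfrac{1}{k}.
\]

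Set $\widetilde v_k:=v_{n_k}^k$. Then $\|\widetilde v_k\|\to\infty$, the sequence $\J(m(\widetilde v_k))$ is bounded above by $\beta+1$ (in fact contained in $[-1,\beta+1]$ for $k\ge 1$), and $(1+\|\widetilde v_k\|)\J'(m(\widetilde v_k))\to 0$. Thus $(\widetilde v_k)\subset\V$ satisfies the hypotheses of Lemma \ref{LemCoercive} with $\beta$ replaced by $\beta+1$, so it must be bounded, contradicting $\|\widetilde v_k\|\to\infty$.

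The only nontrivial ingredient is Lemma \ref{LemCoercive}; beyond that, the proof is purely bookkeeping, so no genuine obstacle arises. One mild technical point to be careful about is that the hypotheses on $\J(m(v_n^k))$ are formulated in terms of $\liminf$ and $\limsup$ rather than pointwise bounds, but this is handled automatically because in extracting the diagonal index $n_k$ we are free to choose $n_k$ as large as needed, and then only the tail behavior of $\J(m(v_n^k))$ and $(1+\|v_n^k\|)\J'(m(v_n^k))$ matters.
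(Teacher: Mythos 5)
Your proposal is correct and is essentially the paper's own argument: the paper also proceeds by contradiction, extracting for each $k$ a sequence violating the bound $k$ and then a diagonal index $n(k)$ so that the resulting sequence is unbounded yet satisfies the hypotheses of Lemma \ref{LemCoercive}. Your write-up merely makes explicit the (routine) choice of $n_k$ that the paper leaves to the reader.
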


\begin{proof}
If no finite bound $M_\beta$ exists, for each $k$ there is a sequence $(v_n^k)$ satisfying the assumptions above and such that $\limsup_{n\to\infty}\|v_n^k\|\ge k$. Now it is easy to find $n(k)$ in such a way that $(v_{n(k)}^k)$ is
an unbounded sequence satisfying the hypotheses of Lemma \ref{LemCoercive}, a contradiction.
\end{proof}

\section{Weak-to-weak$^*$ convergence in $\cM$}\label{SectAnalysis}

\begin{Lem}\label{CoEmb} Suppose that $\Omega$ is a bounded Lipschitz domain. Then
$H^1(\Omega)$ is compactly embedded in $L^\Phi(\Omega)$. 
\end{Lem}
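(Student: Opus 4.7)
My plan is as follows. I will take a bounded sequence $(u_n)\subset H^1(\Omega)$ and produce a subsequence that converges in $L^\Phi(\Omega)$. By the standard Rellich–Kondrachov theorem on a bounded Lipschitz domain in $\R^3$, passing to a subsequence we may assume $u_n\weakto u$ in $H^1(\Omega)$, $u_n\to u$ in $L^q(\Omega)$ for every $q\in[1,6)$, $u_n\to u$ a.e.\ on $\Omega$, and $(u_n)$ is bounded in $L^6(\Omega)$ by the Sobolev embedding. In view of Lemma \ref{AllProp}(iii) together with (N1), it will be enough to verify that
\[
\int_\Omega \Phi\bigl(|u_n-u|\bigr)\,dx \to 0.
\]

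The key step is a pointwise interpolation inequality coming from (N2). Given $\eps>0$, condition (N2) provides $0<\de<M$ such that $\Phi(s)\le\eps s^6$ whenever $s\in[0,\de]\cup[M,\infty)$. Since $\Phi$ is continuous and even, it is bounded on $[\de,M]$ by some constant depending on $\eps$, and on this interval $s^2\ge\de^2$, so there is $C_\eps>0$ with
\[
\Phi(s)\le \eps s^6 + C_\eps s^2 \qquad \text{for all } s\ge 0.
\]
Applying this with $s=|u_n-u|$ and integrating yields
\[
\int_\Omega \Phi(|u_n-u|)\,dx \le \eps\int_\Omega |u_n-u|^6\,dx + C_\eps\int_\Omega |u_n-u|^2\,dx.
\]
The first integral is bounded by a constant independent of $n$ (triangle inequality plus the Sobolev embedding $H^1(\Omega)\hookrightarrow L^6(\Omega)$), while the second tends to zero by the compact embedding $H^1(\Omega)\hookrightarrow L^2(\Omega)$. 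Hence $\limsup_{n\to\infty}\int_\Omega\Phi(|u_n-u|)\,dx\le C\eps$ for every $\eps>0$, which gives the desired convergence to $0$.

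I do not expect any serious obstacle: the only thing one must be careful about is that $\Phi(s)/s^6\to 0$ at both $0$ \emph{and} $\infty$ (so one cannot simply dominate $\Phi$ by $s^6$ on all of $\R$), but the splitting into the three regions $[0,\de]$, $[\de,M]$, $[M,\infty)$ handles this exactly. The $\Delta_2$-condition (N1) is needed at the very end to pass from the modular convergence $\int_\Omega\Phi(|u_n-u|)\,dx\to 0$ to norm convergence $|u_n-u|_\Phi\to 0$ via Lemma \ref{AllProp}(iii); without $\Delta_2$ only modular (and not norm) convergence would be obtained.
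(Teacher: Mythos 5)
Your proof is correct and follows essentially the same route as the paper: both exploit (N2) to bound $\Phi(t)$ by $\eps t^6$ outside a compact range of values, use compactness of the embedding into a weaker Lebesgue space to handle the remaining range, and finish with (N1) and Lemma \ref{AllProp}(iii) to upgrade modular convergence to norm convergence. The only cosmetic difference is that the paper splits the integral over $\{|u_n|\le C_\eps\}$ and its complement and applies dominated convergence using the a.e.\ limit, whereas you package the splitting into the pointwise inequality $\Phi(s)\le\eps s^6+C_\eps s^2$ and invoke the compact embedding $H^1(\Omega)\hookrightarrow L^2(\Omega)$.
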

\begin{proof}
Suppose $u_n\rh 0$ in $H^1(\Om)$. Then $u_n\rh 0$ in $L^6(\Om)$, $u_n\to 0$ in $L^2(\Om)$ and $u_n\to 0$ a.e. in $\Om$ after passing to a subsequence. By (N2), for each $\eps>0$ there exists $C_\eps$ such that $\Phi(t)\le \eps t^6$ for $t>C_\eps$. Hence
\[
\int_{\Om} \Phi(|u_n|)\,dx = \int_{\Om\cap\{|u_n|\le C_\eps\}} \Phi(|u_n|)\,dx + \int_{\Om\cap\{|u_n| > C_\eps\}} \Phi(|u_n|)\,dx \le \int_{\Om\cap\{|u_n|\le C_\eps\}} \Phi(|u_n|)\,dx +C\eps
\]
where the constant $C$ depends only on the $L^6$- bound on $(u_n)$. By the dominated convergence theorem and since $\eps$ is arbitrary, $\int_{\Om} \Phi(|u_n|)\,dx\to 0$ and $|u_n|_\Phi\to 0$ according to Lemma \ref{AllProp}(iii).
\end{proof}

\begin{Prop} \label{prop}
	If $v_n\rh v$ in $\cD$, then $w(v_n)\rh w(v)$ in $\cW$ and, after passing to a subsequence, $w(v_n)\to w(v)$ a.e. in $\R^3$.
\end{Prop}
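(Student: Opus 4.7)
My plan is to establish both the weak and the a.e.\ convergence together, after extracting a convenient subsequence. Since $v_n\rightharpoonup v$ in $\cD$, the sequence is bounded in $\cD$, hence in $L^6(\R^3,\R^3)$ by Sobolev and in $L^\Phi$ by Lemma \ref{Emb}. Lemma \ref{LemDefofW}(b) then yields boundedness of $\{w(v_n)\}$ in $\cW$. Reflexivity of $L^\Phi$ (both $\Phi$ and $\Psi$ satisfy $\Delta_2$) and weak closedness of $\cW$ produce a subsequence with $w(v_n)\rightharpoonup w_0\in\cW$. Lemma \ref{CoEmb} gives compactness of $H^1(\Omega)\hookrightarrow L^\Phi(\Omega)$ for every bounded Lipschitz $\Omega$, so after a further extraction $v_n\to v$ in $L^\Phi_{\mathrm{loc}}$ and a.e.\ on $\R^3$.

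The identification $w_0=w(v)$ should follow from passing to the limit in the Euler--Lagrange equation $\int_{\R^3}\langle f(x,v_n+w(v_n)),\psi\rangle\, dx=0$ against test functions $\psi=\nabla\phi$, $\phi\in C_0^\infty(\R^3)$, provided I can show $w(v_n)\to w_0$ a.e.\ and secure equi-integrability of $\{f(x,v_n+w(v_n))\}$ in $L^\Psi$ on compact sets, so that Vitali's theorem yields the required $L^\Psi_{\mathrm{loc}}$-convergence. Setting $u_n:=v_n+w(v_n)$ and $u_0:=v+w_0$, the Euler--Lagrange condition tested with $\psi=w(v_n)-w_0\in\cW$, together with the fact that $\int\langle f(x,u_0),w(v_n)-w_0\rangle\, dx\to 0$ (from $w(v_n)\rightharpoonup w_0$ and $f(x,u_0)\in L^\Psi$, which follows from (F3) and Lemma \ref{forC1}), combined with the decomposition $u_n-u_0=(v_n-v)+(w(v_n)-w_0)$, yields the Minty-type identity
$$\int_{\R^3}\langle f(x,u_n)-f(x,u_0),u_n-u_0\rangle\, dx=\int_{\R^3}\langle f(x,u_n)-f(x,u_0),v_n-v\rangle\, dx+o(1).$$
Strict monotonicity of $f$ (from (F2)) makes the left-hand side nonnegative, so if the right-hand side tends to zero the nonnegative integrand converges to $0$ in $L^1$, hence a.e.\ along a subsequence, and strict monotonicity forces $u_n\to u_0$ a.e., thus $w(v_n)\to w_0$ a.e. The equi-integrability needed for Vitali comes from Lemma \ref{forC1} together with the energy bound $\int\Phi(|u_n|)\le C\int\Phi(|v_n|)$, a consequence of $\cI(v_n,w(v_n))\le\cI(v_n,0)$ and (F3); passage to the limit then gives $w_0=w(v)$ by uniqueness in Lemma \ref{LemDefofW}(a), and a standard subsequence argument promotes the weak convergence to the full sequence.

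The main obstacle is showing $\int_{\R^3}\langle f(x,u_n)-f(x,u_0),v_n-v\rangle\, dx\to 0$. The $f(x,u_0)$-part vanishes from $v_n-v\rightharpoonup 0$ in $L^\Phi$. The $f(x,u_n)$-part splits into an integral over $B_R$, which vanishes by strong local convergence $v_n-v\to 0$ in $L^\Phi(B_R)$ together with $L^\Psi$-boundedness of $\{f(x,u_n)\}$, and a tail over $B_R^c$. The tail is delicate: because $v_n\rightharpoonup v$ only weakly in $\cD$, concentration bubbles may escape to infinity and $\int_{B_R^c}\Phi(|v_n|)\, dx$ need not be uniformly small in $n$. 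Handling it will call for the sub-critical behaviour (N2), the $\nabla_2$-condition on $\Phi$, and a Lions-type concentration argument along the lines of Lemma \ref{lem:Conv} applied to $v_n-v$ after translating by sequences in $\Z^3$; the $\Z^3$-invariance of the problem then lets one discard the escaping profiles.
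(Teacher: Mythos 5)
Your setup (boundedness of $w(v_n)$, extraction of $w(v_n)\rightharpoonup w_0$, local a.e.\ convergence of $v_n$, the plan to identify $w_0=w(v)$ via Vitali and uniqueness) matches the paper, and the Minty-type identity obtained by testing the Euler--Lagrange equation with $w(v_n)-w_0\in\cW$ is correct as far as it goes. But the argument has a genuine gap exactly at the point you flag: your identity is \emph{global}, so you must show $\int_{\R^3}\langle f(x,u_n)-f(x,u_0),v_n-v\rangle\,dx\to 0$ over all of $\R^3$, and the tail over $B_R^c$ cannot be controlled. By H\"older one only gets a bound $C\,|(v_n-v)\chi_{B_R^c}|_\Phi$, which need not be small uniformly in $n$ when mass of $v_n$ escapes to infinity. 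The remedy you sketch does not close this: a Lions-type vanishing alternative for $v_n-v$ is simply not available (nonvanishing profiles at infinity are the generic case), and $\Z^3$-invariance cannot be invoked because the weak limit $v$ is already fixed --- you are not free to translate the sequence at this stage. Without the RHS tending to zero you cannot conclude that the nonnegative integrand $\langle f(x,u_n)-f(x,u_0),u_n-u_0\rangle$ tends to $0$, so the a.e.\ convergence of $u_n$ does not follow.

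The paper avoids the tail problem entirely by \emph{localizing} the Minty argument, and the device it uses is the one missing from your proposal: on a ball $B(0,R)$ one writes $w(v_n)=\nabla\xi_n$ with $\xi_n\in H^1(B(0,R))$ normalized by $\int_{B(0,R)}\xi_n\,dx=0$; the local $L^2$-bound on $w(v_n)$ (from (N3)) and Poincar\'e give a uniform $H^1$-bound, and the compact embedding of Lemma \ref{CoEmb} yields $\xi_n\to\xi$ strongly in $L^\Phi(B(0,R))$. The admissible test function $z=\nabla\bigl(\zeta(\xi_n-\xi)\bigr)\in\cW$, with $\zeta$ a cutoff equal to $1$ on $\Omega$, then produces a Minty identity supported in $\mathrm{supp}\,\zeta$ only (the commutator term $(\xi_n-\xi)\nabla\zeta$ is killed by the strong $L^\Phi$-convergence of $\xi_n$), from which (F2) gives $u_n\to v+\nabla\xi$ a.e.\ on $\Omega$, and a diagonal argument gives a.e.\ convergence on $\R^3$. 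You should replace your global identity by this local one; with that change the rest of your outline (Vitali, uniqueness of the minimizer, the subsequence argument for the full sequence) goes through as in the paper.
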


\begin{proof}
	It follows from the definition \eqref{DefofW(u)} of $w(v)$ that
	\begin{equation} \label{eq1}
	\int_{\R^3}\langle f\bigl(x,v_n+w(v_n)\bigr),z\rangle\,dx = 0 = \int_{\R^3}\langle f\bigl(x,v+w(v)\bigr),z\rangle\,dx \quad \text{for all } z \in \cW.
	\end{equation}
	Since the sequence $(v_n)$ is bounded, so is $\bigl(w(v_n)\bigr)$ by Lemma \ref{LemDefofW}$(b)$. Hence we may assume $w(v_n)\weakto w_0$ for some $w_0$. In addition, since $v_n\to v$ in $L^2_{loc}(\R^3,\R^3)$, then $v_n\to v$ a.e. after passing to a subsequence.\\
\indent	Let $\Omega\subset\R^3$ be bounded and let $\zeta\in \cC_0^\infty(\R^3, [0,1])$ be such that $\zeta=1$ in $\Omega$. By (F3) and Lemmas \ref{AllProp}(ii), \ref{forC1}, \ref{CoEmb}, for some constant $C>0$ we have
	\begin{equation} \label{eq10}
	0\le\int_{\R^3}|f\bigl(x,v_n+w(v_n)\bigr)|\,|v_n-v|\,\zeta\,dx\le C|\Phi'(|v_n+w(v_n)|)|_{\Psi} |(v_n-v)\zeta|_\Phi\to 0.
	\end{equation}
	Choose $R$ so that $\text{supp\,}\zeta\subset B(0,R)$. By (N3), $\bigl(w(v_n)\bigr)$ is bounded in $L^2(B(0,R),\R^3)$. Indeed,
	\[
	C_1 \ge \int_{B(0,R)\cap\{|w(v_n)|\ge 1\}}\Phi(|w(v_n)|)\,dx \ge C_2  \int_{B(0,R)\cap\{|w(v_n)|\ge 1\}}|w(v_n)|^2\,dx
	\]
for  suitable $C_1,C_2>0$.  By \cite[Lemma 1.1$(i)$]{Leinfelder}, for every $n$  there exists $\xi_n\in H^1(B(0,R))$ such that $w(v_n)=\nabla\xi_n$. We may assume $\int_{B(0,R)}\xi_n\,dx = 0$. Then by the Poincar\'e inequality,
	\[
	\|\xi_n\|_{H^1(B(0,R))}\le C'|\nabla\xi_n|_{L^2(B(0,R))}\le C''
	\]
for some $C'$, $C''>0$. Hence in view of Lemma \ref{CoEmb}, up to a subsequence, $\xi_n\to \xi$ in  $L^\Phi\bigl(B(0,R)\bigr)$  for some $\xi\in H^1(B(0,R))$. Similarly as in \eqref{eq10}, we have
	\begin{equation} \label{eq11}
	\lim_{n\to\infty}\int_{\R^3}|f\bigl(x,v_n+w(v_n)\bigr)|\,|\nabla\zeta|\,|\xi_n-\xi|\,dx=0.
	\end{equation}
The limits in \eqref{eq10} and \eqref{eq11} are 0 also if $f(x,v_n+w(v_n))$ is replaced by $f(x,v+\nabla\xi)$. Combining \eqref{eq1}-\eqref{eq11} we obtain
	\begin{equation} \label{eq2}
	\lim_{n\to\infty} \int_{\R^3}\langle f\bigl(x,v_n+w(v_n)\bigr)-f\bigl(x,v+\nabla\xi\bigr),\zeta \bigl(v_n-v+w(v_n)-\nabla\xi\bigr)\rangle\,dx = 0 
	\end{equation}
	where we have taken $z = \nabla\bigl(\zeta (\xi_n-\xi)\bigr)$ in \eqref{eq1}. We shall show that  $v_n+w(v_n) \to v+ \nabla\xi$ a.e. in $\Omega$. The convexity of $F$ in $u$ implies that 
	$$F\Big(x,\frac{u_1+u_2}{2}\Big)\geq F(x,u_1)+\Big\langle f(x,u_1),\frac{u_2-u_1}{2}\Big\rangle$$
	and
	$$F\Big(x,\frac{u_1+u_2}{2}\Big)\geq F(x,u_2)+\Big\langle f(x,u_2),\frac{u_1-u_2}{2}\Big\rangle.$$
	Adding these inequalities and using (F2), we obtain for any $0<r\leq R$ and $|u_1-u_2|\ge r$, $|u_1|, |u_2| \le R$ that
	\[
	m_{r,R} \le \frac12(F(x,u_1)+F(x,u_2)) - F\left(x,\frac{u_1+u_2}2\right)\le \frac14\langle f(x,u_1)-f(x,u_2), u_1-u_2\rangle
	\]
	 where $m_{r,R}$ has been defined in \eqref{mrR}. Since $\zeta=1$ in $\Om$, it is now easy to see from \eqref{eq2} that $v_n+w(v_n) \to v+ \nabla\xi$ a.e. in $\Omega$ as claimed. Since $w(v_n)\weakto w_0$, $w_0=\nabla\xi$ and by the usual diagonal procedure we obtain a.e. convergence to $v+w_0$ in $\R^3$. 
	 Take any $w\in\cW$ and observe that by the Vitali convergence theorem 
	 $$0=\int_{\R^3}\langle f(x,v_n+w(v_n)), w\rangle\,dx\to \int_{\R^3}\langle f(x,v+w_0), w\rangle\,dx.$$
	 The uniqueness of a minimizer (see Lemma \ref{LemDefofW}) implies that $w_0=w(v)$.
	 \\
	 \indent So far we have shown that if $v_n\rh v$ in $\cD$, then a subsequence of $(w(v_n))$ converges a.e. in $\R^3$, and therefore weakly in $\W$, to $w(v)$. But since each subsequence of $(w(v_n))$ has a subsequence converging weakly to $w(v)$, we can conclude that $w(v_n)\rh w(v)$ for the full sequence. 
\end{proof}

In general $\J'$ is not (sequentially) weak-to-weak$^*$ continuous,
however we show the weak-to-weak$^*$ continuity of $\J'$ for sequences on the topological manifold $\mathcal{M}$. Obviously, the same regularity holds for $\cE'$ and $\cM_{\cE}$.

\begin{Cor}\label{CorJweaklycont}
	If $(v_n,w_n)\in\mathcal{M}$ and $(v_n,w_n)\rightharpoonup (v_0,w_0)$ in $\V\times\W$ then $\J'(v_n,w_n)\rightharpoonup \J'(v_0,w_0)$, i.e.
	$$\J'(v_n,w_n)[(\phi,\psi)]\to \J'(v_0,w_0)[(\phi,\psi)]$$
	for any $(\phi,\psi)\in\V\times\W$.
\end{Cor}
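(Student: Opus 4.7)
The strategy is to split $\J'(v,w)[(\phi,\psi)] = \int_{\R^3}\langle\nabla v,\nabla\phi\rangle\,dx - \int_{\R^3}\langle f(x,v+w),\phi+\psi\rangle\,dx$ into its linear and nonlinear parts. The linear part converges immediately from $v_n\rh v_0$ in $\cD$, so the whole content of the corollary lies in passing to the limit in the nonlinear term, which requires pointwise control of $v_n+w_n$.

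The first step is to identify $w_n = w(v_n)$. Indeed, $(v_n,w_n)\in\cM$ means that $w_n$ is a critical point of the strictly convex functional $w\mapsto\cI(v_n,w)$ on $\cW$, hence its unique minimizer (by (F2) and Lemma \ref{LemDefofW}). Proposition \ref{prop} then yields $w(v_n)\rh w(v_0)$ in $\cW$ with, up to a subsequence, $w(v_n)\to w(v_0)$ a.e.\ in $\R^3$. Comparing with the hypothesis $w_n\rh w_0$, uniqueness of the weak limit gives $w_0 = w(v_0)$. Since also $v_n\to v_0$ a.e.\ along a further subsequence (because $v_n\to v_0$ in $L^2_{loc}$), we conclude that
\[
v_n+w_n \longrightarrow v_0+w_0 \quad\text{a.e. in }\R^3.
\]

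For the nonlinear term I would apply the Vitali convergence theorem to $\langle f(x,v_n+w_n),\phi+\psi\rangle$. By continuity of $f$ in its second variable (F1), the a.e.\ convergence just obtained gives a.e.\ convergence of the integrand to $\langle f(x,v_0+w_0),\phi+\psi\rangle$. Boundedness of $(v_n)$ in $\cV$ and Lemma \ref{LemDefofW}(b) ensure $(w_n)$ is bounded in $\cW$, so by (F3), Lemma \ref{AllProp}(iv) and Lemma \ref{forC1}, the sequence $\bigl(f(x,v_n+w_n)\bigr)$ is bounded in $L^\Psi$. Since $\phi+\psi\in L^\Phi$ (using $\cV\hookrightarrow L^\Phi$ from Lemmas \ref{defof} and \ref{Emb}), Lemma \ref{AllProp}(ii) yields, for any measurable $\Omega\subset\R^3$,
\[
\int_\Omega|\langle f(x,v_n+w_n),\phi+\psi\rangle|\,dx \le C\,|f(x,v_n+w_n)|_\Psi\,|(\phi+\psi)\chi_\Omega|_\Phi,
\]
and the absolute continuity of the norm in $L^\Phi$ (as exploited in the proof of Lemma \ref{LemBrezLieb}) makes the right-hand side arbitrarily small uniformly in $n$ whenever $|\Omega|$ is small (equiabsolute continuity) or $\Omega$ is the complement of a sufficiently large ball (tightness). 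Vitali's theorem then gives
\[
\int_{\R^3}\langle f(x,v_n+w_n),\phi+\psi\rangle\,dx \longrightarrow \int_{\R^3}\langle f(x,v_0+w_0),\phi+\psi\rangle\,dx
\]
along the chosen subsequence. A standard subsequence-of-subsequence argument (every subsequence of $\bigl(\J'(v_n,w_n)[(\phi,\psi)]\bigr)$ admits a further subsequence converging to the same limit $\J'(v_0,w_0)[(\phi,\psi)]$) promotes this to convergence of the full sequence.

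The main obstacle is the a.e.\ convergence of $w_n$: weak convergence alone in $\cW\subset L^\Phi$ does not suffice to pass to the limit inside $f(x,\cdot)$, and this is precisely where Proposition \ref{prop}—whose proof exploits the strict convexity of $F$ and the local regularity furnished by Lemma \ref{CoEmb}—does the essential work. Everything else reduces to standard Orlicz-space estimates provided by Lemmas \ref{AllProp} and \ref{forC1}.
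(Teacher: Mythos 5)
Your proposal is correct and follows essentially the same route as the paper: identify $w_n=w(v_n)$ via Lemma \ref{LemDefofW}, invoke Proposition \ref{prop} for the a.e.\ convergence of $v_n+w_n$, and pass to the limit in the nonlinear term by the Vitali convergence theorem together with the Orlicz--H\"older estimate and absolute continuity of the $L^\Phi$-norm. The only (cosmetic) difference is that the paper first reduces to compactly supported $\phi,\psi$ by density, whereas you treat general test functions directly by adding a tightness estimate as in Lemma \ref{LemBrezLieb}; both are fine.
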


\begin{proof}
	By Lemma \ref{LemDefofW}$(a)$ we get $w_n=w(v_n)$. In view of Proposition \ref{prop}, we may assume  $v_n+w_n\to v_0+w_0$ a.e. in $\R^3$ (where $w_0=w(v_0)$).
	For $(\phi,\psi)\in\V\times\W$ we have
	\begin{eqnarray*}
		\J'(v_n,w_n)[(\phi,\psi)]-\J'(v_0,w_0)[(\phi,\psi)]&=&\int_{\R^3}\langle \nabla v_n-\nabla v_0,\nabla \phi\rangle\,dx\\
		&&-\int_{\R^3}\langle f(x,v_n+w_n)-f(x,u_0+w_0),\phi+\psi\rangle\,dx. 
	\end{eqnarray*}
We may assume $\phi,\psi$ are compactly supported. Let $\Om$ be a bounded set containing the support of $\phi+\psi$. Then
\[
\int_{\Om} |\langle f(x,v_n+w_n)-f(x,u_0+w_0),\phi+\psi\rangle|\,dx \le |f(x,v_n+w_n)-f(x,u_0+w_0)|_{L^\psi(\Om)} |\phi+\psi|_{L^\Phi(\Om)}
\]
(cf. \eqref{*}).	In view of the Vitali convergence theorem and uniform integrability of the norm \cite[Theorem III.4.14]{RaoRen}, we obtain
	$$\J'(v_n,w_n)[(\phi,\psi)]-\J'(v_0,w_0)[(\phi,\psi)]\to 0.$$
\end{proof}

\section{Proof of Theorem \ref{ThMain}}

Recall that the group $G:=\Z^3$ acts isometrically by translations on $X=\cV\times\cW$ and $\cJ$ is $\Z^3$-invariant.
Let
$$\cK:=\big\{v\in \cV: (\cJ\circ m)'(u)=0\big\}$$
and suppose that $\cK$ consists of a finite number of distinct orbits. It is clear that $\Z^3$ acts discretely and hence satisfies the condition (G) in Section \ref{sec:criticaslpoitth}.
Then, in view of Lemma \ref{discrete},
$$\kappa:= \inf\big\{\|v-v'\|_{\D}:\J'\bigl(m(v)\bigr) = \J'\bigl(m(v')\bigr) = 0, v\ne v'\big\}>0.$$

\begin{Lem}\label{Discreteness}
Let $\beta\ge c_{\cN}$ and suppose that $\cK$ has a finite number of distinct orbits.  
If $(u_n),(v_n)\subset\V$ are two Cerami sequences for $\J\circ m$ such that $0\le\liminf_{n\to\infty}\J\bigl(m(u_n)\bigr)\le \limsup_{n\to\infty}\J\bigl(m(u_n)\bigr)\le\beta$, $0\le\liminf_{n\to\infty}\J\bigl(m(v_n)\bigr)\le \limsup_{n\to\infty}\J\bigl(m(v_n)\bigr)\le\beta$ and $\liminf_{n\to\infty}\|u_n-v_n\|_{\D}< \kappa$, then $\lim_{n\to\infty}\|u_n-v_n\|_{\D}=0$.
\end{Lem}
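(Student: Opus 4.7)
The plan is to argue by contradiction via a $\Z^3$-concentration-compactness scheme, combining Corollary \ref{eq:corCeramibounded} with Lemma \ref{lem:Conv} and the weak-to-weak$^*$ continuity from Corollary \ref{CorJweaklycont}. By Corollary \ref{eq:corCeramibounded} both sequences are bounded in $\cV$. Extracting subsequences so that $\|u_n-v_n\|_\cD\to d<\kappa$, I would suppose $d>0$ and aim at a contradiction; repeating this for every sub-subsequence then yields the stated conclusion $\lim\|u_n-v_n\|_\cD=0$.

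The first step is to observe that a bounded Cerami sequence $(u_n)$ for $\J\circ m$ which vanishes in the Lions sense, i.e.\ $\sup_{y\in\Z^3}\int_{B(y,r)}|u_n|^2\,dx\to 0$, must converge strongly to $0$ in $\cV$. Indeed Lemma \ref{lem:Conv} gives $|u_n|_\Phi\to 0$, and the Cerami identity
\[
(\J\circ m)'(u_n)[u_n]=\|u_n\|_\cD^2-\int_{\R^3}\langle f(x,u_n+w(u_n)),u_n\rangle\,dx\to 0
\]
combined with (F3) and Lemmas \ref{AllProp}(ii), \ref{forC1} forces $\|u_n\|_\cD\to 0$. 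In particular if both sequences vanished then $d=0$; thus we may assume $(u_n)$ does not vanish, choose $y_n\in\Z^3$ with $\liminf\int_{B(y_n,r)}|u_n|^2\,dx>0$, and, by $\Z^3$-invariance of $\J\circ m$, replace $(u_n,v_n)$ by $(u_n(\cdot-y_n),v_n(\cdot-y_n))$. After a further subsequence $u_n\rh u_*\ne 0$ and $v_n\rh v_*$ in $\cD$. Proposition \ref{prop} gives $w(u_n)\rh w(u_*)$ and $w(v_n)\rh w(v_*)$ with a.e.\ convergence, and Corollary \ref{CorJweaklycont} combined with $(\J\circ m)'(u_n),(\J\circ m)'(v_n)\to 0$ identifies $u_*,v_*\in\cK$. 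Weak lower semicontinuity of $\|\cdot\|_\cD$ yields $\|u_*-v_*\|_\cD\le d<\kappa$, so by the definition of $\kappa$ one has $u_*=v_*$.

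To conclude, I would iterate by a Brezis--Lieb splitting. Using Lemma \ref{LemBrezLieb} together with the a.e.\ convergence from Proposition \ref{prop} one obtains a decomposition
\[
(\J\circ m)(u_n)=(\J\circ m)(u_*)+(\J\circ m)(u_n-u_*)+o(1),
\]
together with its derivative analogue, showing that $u_n-u_*$ and $v_n-u_*$ are again bounded Cerami sequences for $\J\circ m$ at levels reduced by $(\J\circ m)(u_*)\ge c_\cN$, while $\|u_n-v_n\|_\cD$ is unchanged. Since each extracted bubble costs at least $c_\cN>0$, after at most $\lfloor\beta/c_\cN\rfloor$ iterations the residual Cerami levels drop below $c_\cN$; the residuals must then vanish in the Lions sense and so, by the first step, converge strongly to zero. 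As the same bubbles were subtracted from both sequences, $d=0$ follows. The main obstacle is the Brezis--Lieb splitting of the composed functional $\J\circ m$ and of its derivative along translates: one must show that the implicitly defined minimizer map $w$ interacts additively with the asymptotic profile decomposition, for which the pointwise convergence from Proposition \ref{prop}, the bound $|f|\le c_1\Phi'(|\cdot|)$ of (F3), and strict convexity (F2) are all essential.
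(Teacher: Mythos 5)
Your opening steps are sound and overlap with part of the paper's argument: boundedness via Corollary \ref{eq:corCeramibounded}, the observation that a vanishing bounded Cerami sequence tends to $0$ strongly (via Lemma \ref{lem:Conv}, (F3) and Lemma \ref{forC1}), and the identification of translated weak limits as critical points via Proposition \ref{prop} and Corollary \ref{CorJweaklycont}, whence $u_*=v_*$ by the definition of $\kappa$. The genuine gap is the iteration. You need the residuals $u_n-u_*$ and $v_n-u_*$ to again be Cerami sequences for $\J\circ m$ with energies split as $(\J\circ m)(u_n)=(\J\circ m)(u_*)+(\J\circ m)(u_n-u_*)+o(1)$, and you yourself flag this as ``the main obstacle'' without proving it. It is not a routine Brezis--Lieb computation here: $\J\circ m$ involves the implicitly defined minimizer $w(\cdot)$, which is obtained by a global minimization over $\W$ and has no additivity property, so there is no reason why $w(u_n-u_*)$ should be close to $w(u_n)-w(u_*)$; moreover $\J'$ is only weak-to-weak$^*$ continuous \emph{on} $\cM$, and $m(u_n)-m(u_*)$ need not lie on $\cM$. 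Lemma \ref{LemBrezLieb} gives a splitting of $\int F$ for a fixed sequence, not of the composed functional or of its derivative. As written, the proof does not close.

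The paper avoids all of this by applying the Lions dichotomy to the \emph{difference} $u_n-v_n$ rather than to $u_n$: either $|u_n-v_n|_\Phi\to 0$, in which case writing $\|u_n-v_n\|_{\cD}^2=\J'(m(u_n))[(u_n-v_n,0)]-\J'(m(v_n))[(u_n-v_n,0)]+\int_{\R^3}\langle f(x,m(u_n))-f(x,m(v_n)),u_n-v_n\rangle\,dx$ and estimating the last integral by $c\bigl(|\Phi'(|m(u_n)|)|_\Psi+|\Phi'(|m(v_n)|)|_\Psi\bigr)|u_n-v_n|_\Phi$ gives $\|u_n-v_n\|_{\cD}\to 0$ at once; or $|u_n-v_n|_\Phi\not\to 0$, in which case Lemma \ref{lem:Conv} applied to $u_n-v_n$ yields a single translation after which the two weak limits $u\ne v$ are both critical points with $\kappa\le\|u-v\|_{\cD}\le\liminf_n\|u_n-v_n\|_{\cD}<\kappa$, a contradiction. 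No bubble extraction or energy quantization is needed. If you want to salvage your route, you would have to develop a full profile decomposition for $\J\circ m$ compatible with the constraint map $w$, which is a substantial (and here unnecessary) undertaking.
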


\begin{proof}
Let $m(u_n)=(u_n,w^1_n)$, $m(v_n)=(v_n,w^2_n)$. By Corollary \ref{eq:corCeramibounded}, $m(u_n)$, $m(v_n)$ are bounded.
We first consider the case 
\begin{equation}
\label{pqconvergence}
\lim_{n\to\infty}|u_n-v_n|_{\Phi}=0
\end{equation}
and  prove that
\begin{equation}
\label{convergence}
\lim_{n\to\infty}\|u_n-v_n\|_\D=0.
\end{equation}
By (F3) and Lemmas \ref{forC1}, \ref{LemCoercive}, we have
\[
\begin{split}
\|u_n-v_n\|_{\mathcal{D}}^2 = \, & \J'(m(u_n))[(u_n-v_n,0)]-\J'(m(v_n))[(u_n-v_n,0)]\\
& +\int_{\R}\langle f(x,m(u_n))-f(x,m(v_n)),u_n-v_n\rangle\,dx\le\\
\le \, & o(1)+\int_{\R}\bigl(|f(x,m(u_n))|+|f(x,m(v_n))|\bigr)|u_n-v_n|\,dx\\
\le \, & o(1)+c_1\int_{\R}\big(\Phi'(|m(u_n)|)+\Phi'(|m(v_n)|)\big)|u_n-v_n|\,dx \\
\le \, & o(1)+ c_2\left(|\Phi'(|m(u_n)|)|_\Psi+|\Phi'(|m(v_n)|)|_\Psi\right)|u_n-v_n|_{\Phi} \ \to\ 0
\end{split}
\]
which gives \eqref{convergence}.\\
\indent 
Suppose now \eqref{pqconvergence} does not hold. By Lemma \ref{AllProp} (iii) and Lemma \ref{lem:Conv}, for a fixed $R>\sqrt{3}$ there exist $\eps>0$ and a sequence $(y_n)\subset\Z^3$ such that, passing to a subsequence,
\begin{equation}
\label{boundedaway}
\int_{B(y_n,R)}|u_n-v_n|^2\,dx\ge\eps.
\end{equation}
Since $\J$ is $\Z^3$-invariant, we may assume $y_n=0$. As $m(u_n), m(v_n)$ are bounded, up to a subsequence,
\begin{equation}
\label{weakconvergence}
(u_n,w^1_n)\rightharpoonup(u,w^1)\hbox{ and }(v_n,w^2_n)\rightharpoonup(v,w^2)\quad\hbox{in }\V\times\W 
\end{equation}
for some $(u,w^1),(v,w^2)\in \cV\times\cW$. As $u_n\to u$ and $v_n\to v$ in $L^2_{loc}(\R^3,\R^3)$, $u\ne v$ according to \eqref{boundedaway}.
From Corollary \ref{CorJweaklycont} and \eqref{weakconvergence} we infer that
\[
\J'(u,w^1)=\J'(v,w^2)=0.
\]
Thus
\[
\liminf_{n\to\infty}\|u_n - v_n\|_{\D} \ge\|u - v\|_{\D}\ge \kappa
\]
which is a contradiction.
\end{proof}

\begin{altproof}{Theorem \ref{ThMain}}\mbox{}\\ 
(a) The existence of a Cerami sequence $((v_n,w_n))\subset \cM$ at the level $c_{\cN}$ follows from Proposition \ref{PropDefOfm(u)}, and this sequence is bounded by 
Corollary \ref{eq:corCeramibounded}. Similarly as in the proof of Lemma \ref{Discreteness} we find  $v\in\cV\setminus\{0\}$ such that
$(v_n, w_n) \weakto (v,w)$ and $(v_n, w_n) \to (v,w)$  a.e. in $\R^3$ along a subsequence and $\cJ'(v,w)=0$ (with $w=w(v)$). More precisely, if $|v_n|_\Phi\to 0$, then \eqref{convergence} with $u_n=0$ holds by the same argument. This is impossible because $\J'(m(v_n))\to c_\cN>0$. Hence \eqref{boundedaway} with $u_n=0$ is satisfied and we may assume making translations by $y_n$ if necessary that $\int_{B(0,R)}|v_n|^2\,dx\ge \eps$. So $v\ne 0$.
By Fatou's lemma and (F4),
\begin{eqnarray*}
c_{\cN}&=&\lim_{n\to\infty}\cJ(v_n,w_n)=\lim_{n\to\infty}\Big(\cJ(v_n,w_n)-\frac12\cJ'(v_n,w_n)[(v_n,w_n)]\Big) \\
&\geq&\cJ(v,w)-\frac12\cJ'(v,w)[(v,w)] =\cJ(v,w).
\end{eqnarray*}
Since $(v,w)\in\cN$,  $\J(v,w)=c_{\cN}$ and $E=v+w$ solves \eqref{eq}. Note that here we have \emph{not} assumed $\cK$ has finitely many distinct orbits. \\
(b) In order to complete the proof we use directly Theorem \ref{Th:CrticMulti}(b). That (I1)--(I8) are satisfied and $(M)_0^\beta$ holds for all $\beta>0$ follow from Proposition \ref{PropDefOfm(u)}, Corollary \ref{eq:corCeramibounded} and Lemma \ref{Discreteness}.
\end{altproof}

\section{A remark on the Schr\"odinger equation}\label{sec:Schordingerproblem}

Theorem \ref{Th:CrticMulti} can also be used to deal with the Schr\"odinger equation or a system of equations. In particular, one can use it to obtain alternative proofs of the results in \cite{dePaivaKryszewskiSzulkin, SzulkinWeth}. Contrary to \cite{dePaivaKryszewskiSzulkin}, we do not need to use nonsmooth critical point theory.\\
\indent  Below we  briefly discuss a very simple application of Theorem \ref{Th:CrticMulti}, yet our result extends and complements known ones. We leave the details to the reader.
We look for solutions to the equation
\begin{equation}\label{eqS}
-\Delta u= f(x,u), \quad  x\in\R^N, \ N\ge 3.
\end{equation}
The functional
\begin{equation*}
\J(u):=\frac12\int_{\R^N}|\nabla u|^2\,dx - \int_{\R^N} F(x,u)\,dx
\end{equation*}
corresponding to \eqref{eqS} is of class $\cC^1$ on  $\D^{1,2}(\R^N)$ if $f$ satisfies the following assumptions:
\begin{itemize}
	\item[(AF1)] $F:\R^N\times\R\to\R$ is differentiable with respect to the second variable $u\in\R$ and $f:=\pa_uF:\R^N\times\R\to\R$ is a Carath\'eodory function (i.e.\ measurable in $x\in\R^N$, continuous in $u\in\R$ for a.e.\ $x\in\R^N$). Moreover, $f$ is $\Z^N$-periodic in $x$, i.e. $f(x,u)=f(x+y,u)$ for $x\in\R^N,u\in \R$ and $y\in\Z^N$.
	\item[(AF2)] 
	$\displaystyle \lim_{u\to 0}f(x,u)/|u|^{2^*-1}=\lim_{|u|\to\infty}f(x,u)/|u|^{2^*-1}=0$ uniformly in $x$ where $2^* := 2N/(N-2)$.
	\item[(AF3)] $F(x,u)/u^2\to\infty$ uniformly in $x$ as $|u|\to\infty$.
	\item[(AF4)] $f(x,u)/|u|$ is non-decreasing on $(-\infty,0)$ and on $(0,\infty)$.
\end{itemize}
Note that there is no convexity-type assumption similar to (F2). However, (AF4) implies (not necessarily uniform) convexity of $F$ as well as (F4). Since the quadratic part of $\cJ$ is positive definite, we have $X^+=\cM=\cD^{1,2}(\R^N)$ and $\tX=\{0\}$, so $m(u)=u$ here and we easily check (I1)--(I8) from Section \ref{sec:criticaslpoitth}. In fact (I2)--(I4) are trivially satisfied, (I5) is an empty condition and (I8) becomes much simpler because $v$ is necessarily 0. Using Theorems \ref{ThLink1} and \ref{Th:CrticMulti} we obtain the following result. 

\begin{Th}\label{ThMainAppendix} Assume that (AF1)--(AF4) hold. Then: \\
	(a) Equation \eqref{eqS} has a ground state solution, i.e. there is a critical point $u\in\cN$ of $\J$ such that
	$$\J(u)=\inf_{\mathcal{N}}\J>0$$
	where
	\begin{eqnarray*}
	\mathcal{N} &:=& \big\{u\in \D^{1,2}(\R^N): u\neq 0,\; 
	\J'(u)(u)=0\big\}.
	\end{eqnarray*}
	(b) If in addition $F$ is even in $u$, then there is an infinite sequence $(u_n)\subset\cN$ of geometrically distinct solutions of  \eqref{eq}, i.e. solutions such that  $(\Z^N\ast u_n)\cap (\Z^N\ast u_m)=\emptyset$ for $n\neq m$ where
	$$\Z^N\ast u_n:=\{u_n(\cdot+y): y\in\Z^N\}.$$
\end{Th}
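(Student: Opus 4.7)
The plan is to apply Theorems \ref{ThLink1} and \ref{Th:CrticMulti} with $X=X^+=\cD^{1,2}(\R^N)$ and $\tX=\{0\}$, as the theorem statement itself suggests. With this choice the constraint set $\cM$ coincides with $X^+$, the map $m$ is the identity, $\tJ=\cJ$, and the Nehari set \eqref{eq:NehariDef} reduces to the usual $\cN=\{u\ne 0 : \cJ'(u)[u]=0\}$. The group $G=\Z^N$ acts by translations and satisfies condition (G), so the multiplicity machinery is available.

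Verification of (I1)--(I8) is largely routine in this degenerate case. Conditions (I2), (I3), (I5) are vacuous or trivial since $\cT$-convergence coincides with strong convergence in $X^+$ when $\tX=\{0\}$; (I1) follows from (AF1), (AF2), (AF4) (which together imply $F(x,0)=0$ and $F(x,u)\ge 0$); (I4) is immediate; (I6) is the classical mountain-pass estimate, using (AF2) to bound $|F(x,u)|\le\eps|u|^{2^*}$ near $0$ together with the Sobolev embedding $\cD^{1,2}\hookrightarrow L^{2^*}$; (I7) follows from (AF3) by Fatou's lemma applied to $F(x,t_n u_n)/t_n^2=(F(x,t_nu_n)/(t_nu_n)^2)\,u_n^2$; and (I8), which for $\tX=\{0\}$ reduces to the pointwise inequality
\[
\tfrac{t^2-1}{2}\,f(x,u)u + F(x,u) - F(x,tu)\le 0\qquad\text{for all } t\ge 0,\ u\in\R,
\]
follows from the monotonicity (AF4) by the classical argument of \cite{SzulkinWeth}.

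The substantive step is to verify $(M)_0^\beta$ for every $\beta>0$. For part (a), boundedness of Cerami sequences would be established by contradiction in the spirit of Lemma \ref{LemCoercive}: normalize $\bar u_n:=u_n/\|u_n\|$ and split into two cases. If $\bar u_n$ vanishes in the Lions sense, then the analog of Lemma \ref{lem:Conv} in $L^{2^*}$ combined with (AF2) gives $\int F(x,s\bar u_n)\,dx\to 0$ for every fixed $s$; the inequality from (I8) then yields $\beta\ge\cJ(u_n)\ge\cJ(s\bar u_n)+o(1)\ge s^2/2+o(1)$, a contradiction for $s$ large. If $\bar u_n$ does not vanish, then after a $\Z^N$-translation a subsequence converges a.e. to some $\bar u\ne 0$, and (AF3) together with Fatou forces $\cJ(u_n)/\|u_n\|^2\to -\infty$, again impossible. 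For part (b), I would mimic Lemma \ref{Discreteness}: if $|u_n-v_n|_{2^*}\to 0$, expanding $\|u_n-v_n\|^2 = (\cJ'(u_n)-\cJ'(v_n))[u_n-v_n] + \int(f(x,u_n)-f(x,v_n))(u_n-v_n)\,dx$ and using (AF2) with H\"older gives $\|u_n-v_n\|\to 0$; otherwise Lions concentration followed by a $\Z^N$-translation produces two distinct weak limits which are both critical points of $\cJ$ (weak-to-weak$^*$ continuity of $\cJ'$ is automatic here, not requiring Proposition \ref{prop}), and their distance is at least $\kappa$, contradicting $\|u_n-v_n\|<m_0^\beta$.

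With (I1)--(I8) and $(M)_0^\beta$ in hand, Theorem \ref{ThLink1}(a) produces a Cerami sequence at the level $c_\cM=c_\cN>0$, which is bounded by the newly verified $(M)_0^\beta$(a). Arguing as in the proof of Theorem \ref{ThMain}(a), the translated weak limit is nontrivial and, by Fatou combined with (AF4), achieves $c_\cN$; this yields (a). Part (b) then follows directly from Theorem \ref{Th:CrticMulti}(b). I expect the main obstacle to lie in the boundedness assertion $(M)_0^\beta$(a), since (AF4) is strictly weaker than Ambrosetti--Rabinowitz and one has to run a full Lions concentration analysis, using the periodicity in a crucial way, in place of the usual one-line $\int(f(x,u_n)u_n-2F(x,u_n))\,dx$ argument.
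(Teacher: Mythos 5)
Your proposal is correct and follows exactly the route the paper intends: take $X^+=\cM=\cD^{1,2}(\R^N)$, $\tX=\{0\}$, $m=\mathrm{id}$, check (I1)--(I8) (with (I2)--(I5) trivial or empty and (I8) reducing to the pointwise Szulkin--Weth inequality via (AF4)), verify $(M)_0^\beta$ by a Lions concentration argument modelled on Lemmas \ref{LemCoercive} and \ref{Discreteness}, and then invoke Theorems \ref{ThLink1} and \ref{Th:CrticMulti}. The paper itself only sketches this and leaves the details to the reader, so your write-up supplies essentially the intended proof.
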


Problem \eqref{eqS} with growth of the form (AF2) is the so called {\em zero mass case} introduced in \cite{BerLions} for the  autonomous nonlinearity $f(x,u)=f(u)$. In the nonautonomous case it has been  studied e.g. in \cite{BenGrisantiMich,ClappMaia}, see also the references therein. In \cite{BenGrisantiMich,ClappMaia} more restrictive growth conditions have been imposed. In particular, $F$ is of order $|u|^q$ for small $|u|>0$ and of order $|u|^p$ for $|u|$ large where $2<p<2^*<q$. This makes it necessary to work in the Orlicz space $L^p(\R^N)+L^q(\R^N)$.
 In Theorem \ref{ThMainAppendix} we are able to deal with a class of nonlinearities with less restrictive growth conditions (AF2) and we no longer need to use any Orlicz setting.

{\bf Acknowledgements.}
J.M.  would like to thank the members of the CRC 1173 as well as the members of the Institute of Analysis at Karlsruhe Institute of Technology (KIT), 
where part of this work has been done, for their invitation, support and warm hospitality and he was partially supported by the Deutsche Forschungs\-gemeinschaft (DFG) through CRC 1173. J.M. and J.S. were also supported by the National Science Centre, Poland (Grant No. 2017/26/E/ST1/00817).

{\bf Compliance with Ethical Standards.} The authors declare that they have no conflict of
interests, they also confirm that the manuscript complies to the Ethical Rules applicable for this
journal.

\end{document}